\newtheorem{rem}{Remark}[section]
\renewcommand\arraystretch{1.5}
    \title{On a SAV-MAC scheme for the Cahn-Hilliard-Navier-Stokes Phase Field Model
\thanks{The work of X. Li is supported by the Postdoctoral Science Foundation of China Grant No. BX20190187. The work of J. Shen is supported in part by NSF grants  DMS-1620262, 
DMS-1720442 and AFOSR  grant FA9550-16-1-0102.}
}
    \author{ Xiaoli Li
        \thanks{School of Mathematical Sciences and Fujian Provincial Key Laboratory on Mathematical Modeling and High Performance Scientific Computing, Xiamen University, Xiamen, Fujian, 361005, China. Email: xiaolisdu@163.com}.
        \and Jie Shen 
         \thanks{Corresponding Author. Department of Mathematics, Purdue University, West Lafayette, IN 47907, USA. Email: shen7@purdue.edu}.
}
\begin{document}
\maketitle

\begin{abstract}
We construct a numerical scheme based on the scalar auxiliary variable (SAV) approach in time  and the MAC discretization in space  for the Cahn-Hilliard-Navier-Stokes phase field model, and carry out  stability and error analysis. The scheme is linear, second-order, unconditionally energy stable and  can be implemented very efficiently. 
We establish  second-order error estimates both in time and space for 
phase field variable, chemical potential, velocity and pressure  in different discrete norms. We also provide numerical experiments to verify our theoretical results and  demonstrate the robustness and accuracy of the our scheme.
\end{abstract}

 \begin{keywords}
 Cahn-Hilliard-Navier-Stokes; scalar auxiliary variable (SAV);  finite-difference; staggered grids; energy stability; error estimates \end{keywords}
 
    \begin{AMS}
35G25, 65M06, 65M12, 65M15, 65Z05, 76D07
    \end{AMS}

\pagestyle{myheadings}
\thispagestyle{plain}
\markboth{XIAOLI LI AND JIE SHEN}{SAV-Mac scheme for CHNS phase-field model}
 \section{Introduction} 
 Interfacial dynamics in the mixture of different fluids, solids or gas has been one of the fundamental issues in many fields of science and engineering, particularly in materials science and fluid dynamics, see for instance, \cite{cahn1958free,cahn1959free,yue2004diffuse,shen2018scalar} and the references therein. In recent years the phase field (i.e. diffuse interface) methods, have been successfully used to approximate a variety of interfacial dynamics. The basic idea for the phase field methods is that the interface is represented as a thin transition layer between two phases \cite{van1979thermodynamic,chen2016efficient}.   
 
The phase field model  can be derived from an energy variational approach. Thus a 
crucial goal in  algorithm design is to preserve the energy law at the discrete level. A large number of numerical schemes that have been developed for phase field models. 
Among them, the convex splitting approach \cite{shen2012second,wang2011energy, hu2009stable} and stabilized linearly implicit  approach \cite{liu2007dynamics,shen2010numerical,xu2006stability,zhao2016energy} are two popular ways to constuct unconditionally energy stable schemes. Unfortunately, the convex splitting approach usually leads to nonlinear schemes, and the stabilized linearly implicit  approach results in additional accuracy issues and may not be easy to obtain second order unconditionally energy stable schemes. Recently, a novel numerical method of invariant energy quadratization (IEQ), has been proposed in \cite{cheng2017efficient,zhao2017novel,yang2017numerical}. This method is a generalization of the method of Lagrange multipliers or of auxiliary variable. The IEQ approach is remarkable as it permits us to construct linear and second-order unconditionally energy stable schemes for a large class of gradient flows.
However, it  leads to coupled systems with time-dependent variable coefficients. The scalar auxiliary variable (SAV) approach \cite{shen2018scalar,shen2017new}  inherits  advantages of the IEQ approach but leads to decoupled systems with constant coefficients so it is both accurate and very efficient.

 As for the Cahn-Hilliard-Navier-Stokes phase-field models, Shen and Yang  \cite{shen2010phase,shen2015decoupled} constructed several efficient time discretization schemes for two-phase incompressible flows with different densities and viscosities,  established discrete energy laws but  no error estimates were derived. Second order in time numerical scheme based on the 
convex-splitting for the Cahn-Hilliard equation and pressure-projection for the Navier-Stokes equation has been constructed by Han and Wang in \cite{han2015second}.
With regards to the numerical analysis, Feng, He and Liu \cite{feng2007analysis} proposed and analyzed some semi-discrete and fully discrete finite element schemes with the abstract convergence by making use of the discrete energy law. Gr\"un \cite{grun2013convergent} proved a abstract convergence result of a fully discrete scheme for a diffuse interface models for two-phase incompressible fluids. Diegel, Feng, and Wise \cite{diegel2015analysis} developed a fully discrete mixed finite element convex-splitting scheme for the Cahn-Hilliard-Darcy-Stokes system. The time discretization used is a first-order implicit Euler. They proved unconditional energy stability and error estimates for the phase field variable, chemical potential and velocity. No convergence rate for pressure was demonstrated in their work.

The work presented in this paper for the Cahn-Hilliard-Navier-Stokes phase field model is unique in the following aspects. First, we construct fully discrete linear,  second-order  (in space and time), unconditionally energy stable scheme for the Cahn-Hilliard-Navier-Stokes phase field model. Furthermore, the scheme can be very efficiently implemented.  Secondly, 
 we carry out a rigorous error analysis to derive second-order error estimates  both in time and space  for 
phase field variable, chemical potential, velocity and pressure in different discrete norms for the Cahn-Hilliard-Stokes phase field model.  We believe that this is the first such result for any  fully discrete linear schemes for  Cahn-Hilliard-Stokes or Cahn-Hilliard-Navier-Stokes models without assuming a uniform Lipschitz condition on the nonlinear potential.


The paper is organized as follows. In Section 2 we describe  the problem and present some notations. In Section 3 we present  the fully discrete SAV-MAC schemes and prove their stability. In Section 4 we carry out  error estimates for the fully discrete SAV-MAC scheme for the Cahn-Hilliard-Stokes system. In Section 5, we present some numerical experiments  to verify the accuracy of the proposed numerical schemes. More details about  the MAC scheme are given in the Appendix.
\section{The Problem Description and  Notations} \label{sec:Notation}
We consider  the following incompressible Cahn-Hilliard-Navier-Stokes phase field model (cf. \cite{feng2007analysis,chen2016efficient,diegel2015analysis}):

  \begin{subequations}\label{e_model}
    \begin{align}
    \frac{\partial \phi}{\partial t}=M\Delta \mu-\textbf{u}\cdot\nabla\phi  \quad &\ in\ \Omega\times J,
    \label{e_modelA}\\
    \mu=-\lambda\Delta \phi+\lambda F^{\prime}(\phi) \quad &\ in\ \Omega\times J,
    \label{e_modelB}\\
     \frac{\partial \textbf{u}}{\partial t}+\gamma \textbf{u}\cdot \nabla\textbf{u}
     -\nu\Delta\textbf{u}+\nabla p=\mu\nabla\phi
     \quad &\ in\ \Omega\times J,
      \label{e_modelC}\\
      \nabla\cdot\textbf{u}=0
      \quad &\ in\ \Omega\times J,
      \label{e_modelD}\\
       \frac{\partial \phi}{\partial \textbf{n}}= \frac{\partial \mu}{\partial \textbf{n}}=0,\
     \textbf{u}=\textbf{0}
      \quad &\ on\ \partial\Omega\times J,
      \label{e_modelE}
    \end{align}
  \end{subequations}
where $\displaystyle F(\phi)=\frac{1}{4\epsilon^2}(1-\phi^2)^2$, $M>0$ is the mobility constant, $\nu>0$ is the fluid viscosity. $\lambda>0$ is the mixing coefficient, $\Omega$ is a bounded domain and   $J=(0,T]$.  The unknowns are the velocity $\textbf{u}$, the pressure $p$, the pase function $\phi$ and the chemical potential $\mu$. It models the dynamics of the mixture of two-incompressible fluids with the same density, which is set to be $\rho_0=1$ for simplicity. $\gamma$ is an additional parameter that we added to distinguish the Cahn-Hilliard-Navier-Stokes model ($\gamma=1$) and the Cahn-Hilliard-Stokes model ($\gamma=0$). When the viscosity $\nu$ is not sufficient large,  the Cahn-Hilliard-Stokes model can be used as a good approximation to the Cahn-Hilliard-Navier-Stokes model.

Taking the inner products of \eqref{e_modelA} with $\mu$, \eqref{e_modelB}  with $\frac{\partial \phi}{\partial t}$, \eqref{e_modelC}  with $\textbf{u}$  respectively, we obtain the following energy dissipation law:
\begin{equation}\label{energy1}
 \frac{d E(\phi,\textbf{u})}{d t}=-M\|\nabla \mu\|^2-\nu\|\nabla\textbf{u}\|^2,
\end{equation}
where $E(\phi,\textbf{u})=\int_\Omega\{\frac 12|\textbf{u}|^2+\lambda (\frac 12 |\nabla \phi|^2+F(\phi))\}$ is the total energy.

For two-phase flows  with low Reynolds numbers, one can approximate 

We now introduce some standard notations.

Let $L^m(\Omega)$ be the standard Banach space with norm
$$\| v\|_{L^m(\Omega)}=\left(\int_{\Omega}| v|^md\Omega\right)^{1/m}.$$
For simplicity, let
$$(f,g)=(f,g)_{L^2(\Omega)}=\int_{\Omega}fgd\Omega$$
denote the $L^2(\Omega)$ inner product,
 $\|v\|_{\infty}=\|v\|_{L^{\infty}(\Omega)}.$ And $W_p^k(\Omega)$ be the standard Sobolev space
$$W_p^k(\Omega)=\{g:~\| g\|_{W_p^k(\Omega)}<\infty\},$$
where
\begin{equation}\label{enorm1}
\| g\|_{W_p^k(\Omega)}=\left(\sum\limits_{|\alpha|\leq k}\| D^\alpha g\|_{L^p(\Omega)}^p \right)^{1/p}.
\end{equation}

Throughout the paper we use $C$, with or without subscript, to denote a positive
constant, independent of discretization parameters, which could have different values at different places.

  \section{The SAV Schemes and their stability}
 In this section, we first reformulate the phase-field system into an equivalent 
  system with an additional scalar auxiliary variable (SAV). Then,  
   we construct  semi discrete and fully discrete SAV schemes, and prove that they are unconditionally energy stable.
  
 \subsection{The SAV reformulation} 
 We introduce  a scalar auxiliary variable $r(t)=\sqrt{E_1(\phi)+\delta}$ with any $\delta>0$, and reformulate the  
 system (\ref{e_model})  as:
 \begin{subequations}\label{e_model_r}
    \begin{align}
    \frac{\partial \phi}{\partial t}=M\Delta \mu-\textbf{u}\cdot\nabla\phi  \quad &\ in\ \Omega\times J,
    \label{e_model_rA}\\
    \mu=-\lambda\Delta \phi+\lambda\frac{r}{\sqrt{E_1(\phi)+\delta}}F^{\prime}(\phi) \quad &\ in\ \Omega\times J,
    \label{e_model_rB}\\
       r_t=\frac{1}{2\sqrt{E_1(\phi)+\delta}}\int_{\Omega}F^{\prime}(\phi)\phi_t d\textbf{x}\quad &\ in\ \Omega\times J,
    \label{e_model_rC}\\   
     \frac{\partial \textbf{u}}{\partial t}+\gamma\textbf{u}\cdot \nabla\textbf{u}
     -\nu\Delta\textbf{u}+\nabla p=\mu\nabla\phi
     \quad &\ in\ \Omega\times J,
      \label{e_model_rD}\\
      \nabla\cdot\textbf{u}=0
      \quad &\ in\ \Omega\times J.
      \label{e_model_rE}
    \end{align}
  \end{subequations}
  where $E_1(\phi)=\int_{\Omega}F(\phi)d\textbf{x}$. It is clear that with $r(0)=\sqrt{E_1(\phi|_{t=0})+\delta}$, the above system is equivalent to (\ref{e_model}).  
  Taking the inner products of \eqref{e_model_rA} with $\mu$, \eqref{e_model_rB}  with $\frac{\partial \phi}{\partial t}$, \eqref{e_model_rC}  with $2\lambda r$ and  \eqref{e_model_rD} with $\textbf{u}$  respectively, we obtain the following energy dissipation law:
\begin{equation}\label{energy1_modify}
 \frac{d \tilde E(\phi,\textbf{u},r)}{d t}=-M\|\nabla \mu\|^2-\nu\|\nabla\textbf{u}\|^2,
\end{equation}
where $\tilde E(\phi,\textbf{u},r)=\int_\Omega\frac 12\{|\textbf{u}|^2+\lambda |\nabla \phi|^2\}d\textbf{x}+\lambda r^2$ is the total energy.
  
  \subsection{The semi discrete SAV/CN scheme}
 
  Set  $\Delta t=T/N,~t^n=n\Delta t, ~for~n\leq N,$
and define
$$[d_{t}f]^n=\frac{f^n-f^{n-1}}{\Delta t},\ \ f^{n+1/2}=\frac{f^n+f^{n+1}}{2}.$$
Then, a second-order SAV scheme based on Crank-Nicolson is:
\begin{subequations}\label{e_model_r_time_discrete}
    \begin{align}
 &    \frac{\phi^{n+1}-\phi^n}{\Delta t}=M\Delta \mu^{n+1/2}-\textbf{u}^{n+1/2}\cdot
     \nabla\tilde{\phi}^{n+1/2}, \label{e_model_r_time_discrete1}\\
& \mu^{n+1/2}=-\lambda\Delta \phi^{n+1/2}+\lambda\frac{r^{n+1/2}}{\sqrt{E_1(\tilde{\phi}^{n+1/2})+\delta}}F^{\prime}(\tilde{\phi}^{n+1/2}),\label{e_model_r_time_discrete2}\\
& \frac{r^{n+1}-r^n}{\Delta t}=\frac{1}{2\sqrt{E_1(\tilde{\phi}^{n+1/2})+\delta}}\int_{\Omega}F^{\prime}(\tilde{\phi}^{n+1/2}) \frac{\phi^{n+1}-\phi^n}{\Delta t} d\textbf{x},\label{e_model_r_time_discrete3}\\
& \frac{\textbf{u}^{n+1}-\textbf{u}^n}{\Delta t}+\gamma\tilde{\textbf{u}}^{n+1/2}\cdot \nabla\textbf{u}^{n+1/2}  -\nu\Delta\textbf{u}^{n+1/2}\nonumber\\
&\hskip   3cm   +\nabla p^{n+1/2}=\mu^{n+1/2}\nabla\tilde{\phi}^{n+1/2},\label{e_model_r_time_discrete4}\\
&  \nabla\cdot\textbf{u}^{n+1/2}=0,  \label{e_model_r_time_discrete5}
     \end{align}
  \end{subequations}
 where $\tilde{\textbf{u}}^{n+1/2}=(3\textbf{u}^n-\textbf{u}^{n-1})/2$ and  $\tilde{\phi}^{n+1/2}=(3\phi^n-\phi^{n-1})/2$. We also set  $\textbf{u}^{-1}=\textbf{u}^{0}$.

\begin{theorem}
 The scheme \eqref{e_model_r_time_discrete} is unconditionally energy stable in the sense that
 \begin{equation*}
 \tilde E^{n+1}(\phi,\textbf{u},r)-\tilde E^n(\phi,\textbf{u},r)=-M\|\nabla \mu^{n+1/2}\|^2-\nu\|\nabla \textbf{u}^{n+1/2}\|^2,
\end{equation*}
where 
\begin{equation*}
 \tilde E^{n+1}(\phi,\textbf{u},r)=\int_\Omega\frac 12\{|\textbf{u}^{n+1}|^2+\lambda  |\nabla \phi^{n+1}|^2\}d\textbf{x}+\lambda |r^{n+1}|^2.
 \end{equation*}
\end{theorem}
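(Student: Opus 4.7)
The strategy is to mimic at the discrete level the continuous derivation of the modified dissipation law \eqref{energy1_modify}, by testing each equation of the scheme \eqref{e_model_r_time_discrete} against the discrete analogue of the multiplier used in the continuous case and summing.

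Concretely, the plan is the following. First, take the inner product of \eqref{e_model_r_time_discrete1} with $\Delta t\,\mu^{n+1/2}$ and the inner product of \eqref{e_model_r_time_discrete2} with $-(\phi^{n+1}-\phi^n)$, then add them. The two $(\mu^{n+1/2},\phi^{n+1}-\phi^n)$ contributions cancel, which eliminates $\mu^{n+1/2}$ from the left-hand sides. On the right-hand side, integration by parts (using the Neumann conditions in \eqref{e_modelE}) turns $-\lambda(\Delta\phi^{n+1/2},\phi^{n+1}-\phi^n)$ into $\tfrac{\lambda}{2}(\|\nabla\phi^{n+1}\|^2-\|\nabla\phi^n\|^2)$, while the diffusion term from \eqref{e_model_r_time_discrete1} produces $-\Delta t\,M\|\nabla\mu^{n+1/2}\|^2$. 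Second, multiply \eqref{e_model_r_time_discrete3} by $2\lambda\Delta t\,r^{n+1/2}$ and use the identity $2r^{n+1/2}(r^{n+1}-r^n)=|r^{n+1}|^2-|r^n|^2$; the right-hand side is precisely what is needed to cancel the nonlinear term $\lambda\frac{r^{n+1/2}}{\sqrt{E_1(\tilde\phi^{n+1/2})+\delta}}(F'(\tilde\phi^{n+1/2}),\phi^{n+1}-\phi^n)$ that survives from the previous step. This is the whole point of the SAV construction and makes the cancellation completely linear and algebraic, independent of the form of $F$.

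Third, take the inner product of \eqref{e_model_r_time_discrete4} with $\Delta t\,\textbf{u}^{n+1/2}$. The time-derivative term gives $\tfrac12(\|\textbf{u}^{n+1}\|^2-\|\textbf{u}^n\|^2)$ and the viscous term gives $\Delta t\,\nu\|\nabla\textbf{u}^{n+1/2}\|^2$ after integration by parts (using $\textbf{u}=\mathbf{0}$ on $\partial\Omega$). The pressure term vanishes by virtue of \eqref{e_model_r_time_discrete5}, and the convective term vanishes because $\tilde{\textbf{u}}^{n+1/2}=(3\textbf{u}^n-\textbf{u}^{n-1})/2$ is also discretely divergence free (it is an affine combination of previously computed divergence-free iterates, with the initialization $\textbf{u}^{-1}=\textbf{u}^0$), so that $(\tilde{\textbf{u}}^{n+1/2}\cdot\nabla\textbf{u}^{n+1/2},\textbf{u}^{n+1/2})=0$. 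The coupling term on the right-hand side produces $\Delta t(\mu^{n+1/2}\nabla\tilde\phi^{n+1/2},\textbf{u}^{n+1/2})=\Delta t(\textbf{u}^{n+1/2}\cdot\nabla\tilde\phi^{n+1/2},\mu^{n+1/2})$, which exactly cancels the transport contribution left over from testing \eqref{e_model_r_time_discrete1} against $\mu^{n+1/2}$. Summing the three partial identities then yields the claimed equality.

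The step I expect to require the most care is the cancellation of the coupling/transport pair: it works only because both equations \eqref{e_model_r_time_discrete1} and \eqref{e_model_r_time_discrete4} use exactly the same extrapolated $\tilde\phi^{n+1/2}$ as the transported/advected quantity. Had one used $\phi^{n+1/2}$ in one place and $\tilde\phi^{n+1/2}$ in the other, the cancellation would fail. The analogous care is needed for the convective term in the momentum equation: energy conservation of the trilinear form requires a discretely solenoidal advecting velocity, which justifies the explicit, Adams--Bashforth-type extrapolation $\tilde{\textbf{u}}^{n+1/2}$. Once these two structural observations are in place, the remaining computation is standard Crank--Nicolson telescoping.
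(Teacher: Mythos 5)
Your proposal is correct and follows exactly the paper's own (very terse) argument: test \eqref{e_model_r_time_discrete1} with $\mu^{n+1/2}$, \eqref{e_model_r_time_discrete2} with $(\phi^{n+1}-\phi^n)/\Delta t$, \eqref{e_model_r_time_discrete3} with $2\lambda r^{n+1/2}$, and \eqref{e_model_r_time_discrete4} with $\textbf{u}^{n+1/2}$, then use the SAV cancellation, the divergence-free conditions, and the matching of $\tilde\phi^{n+1/2}$ in the transport and coupling terms. Your elaboration of the cancellations (and the observation that $\tilde{\textbf{u}}^{n+1/2}$ is divergence free by induction from \eqref{e_model_r_time_discrete5} and $\textbf{u}^{-1}=\textbf{u}^0$) is precisely what the paper leaves implicit, and your derivation in fact shows that the right-hand side of the stated identity should carry a factor $\Delta t$, consistent with the fully discrete Theorem \ref{thm_discrete total energy}.
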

\begin{proof}
 The proof is quite straightforward.  Taking the inner products of \eqref{e_model_r_time_discrete1} with $\mu^{n+\frac 12}$, \eqref{e_model_r_time_discrete2}  with $\frac{\phi^{n+1}-\phi^n}{\Delta t}$, \eqref{e_model_r_time_discrete3}   with $2\lambda r^{n+1/2}$ and  \eqref{e_model_r_time_discrete4}  with $\textbf{u}^{n+1/2}$  respectively, we obtain immediately the desired result.
\end{proof}

 \begin{rem} \label{rem1}
\begin{itemize}
  \item The above scheme is second-order in time and linear, but it is weakly coupled. The above stability result indicates that this weakly coupled system is positive definite. 

\item  If  $\textbf{u}^{n+1/2}$ in \eqref{e_model_r_time_discrete1} is replaced by an explicit second-order extrapolation,  
 $(\phi^{n+1},\mu^{n+1},r^{n+1})$ can be obtained  from  \eqref{e_model_r_time_discrete1}-\eqref{e_model_r_time_discrete3}  efficiently by solving decoupled elliptic systems with constant coefficients (cf. \cite{shen2018scalar}).   Once $\mu^{n+1}$ is known, we can solve $(\textbf{u}^{n+1}, p^{n+1})$ from 
 \eqref{e_model_r_time_discrete4}-\eqref{e_model_r_time_discrete5} which is essentially a generalized Stokes problem that can be solved efficiently with a MAC scheme (see below). 
\item We can use the decoupled  scheme with explicit treatment of $ \textbf{u}^{n+1/2}$ in \eqref{e_model_r_time_discrete1}  as a preconditioner for the weakly coupled scheme.
\end{itemize}
\end{rem}

  \subsection{Spacial discretization by finite differences}

Denote by $\{Z^n, W^n, R^n,$
$ \textbf{U}^n, P^n\}_{n=1}^{N}$, the approximations to $\{\phi^n, \mu^n, r^n, \textbf{u}^n, p^n\}_{n=1}^{N}$ respectively, with  the boundary conditions
 \begin{eqnarray}\label{boundary condition}
 \left\{
 \begin{array}{lll}
 \displaystyle [D_xZ]_{0,j+1/2}^{n}=[D_xZ]_{N_x,j+1/2}^{n}=0,& 0\leq j\leq N_y-1,\\
 \displaystyle [D_yZ]_{i+1/2,0}^{n}=[D_yZ]_{i+1/2,N_y}^{n}=0,& 0\leq i\leq N_x-1,\\
 \displaystyle [D_xW]_{0,j+1/2}^{n}=[D_xW]_{N_x,j+1/2}^{n}=0,& 0\leq j\leq N_y-1,\\
 \displaystyle [D_yW]_{i+1/2,0}^{n}=[D_yW]_{i+1/2,N_y}^{n}=0,& 0\leq i\leq N_x-1,\\
 \displaystyle U_{1,0,j+1/2}^{n}=U_{1,N_x,j+1/2}^{n}=0,& 0\leq j\leq N_y-1,\\
 \displaystyle U_{1,i,0}^{n}=U_{1,i,N_y}^{n}=0,& 0\leq i\leq N_x,\\
 \displaystyle U_{2,0,j}^{n}=U_{2,N_x,j}^{n}=0,  &0\leq j\leq N_y,\\
   \displaystyle U_{2,i+1/2,0}^{n}=W_{2,i+1/2,N_y}^{n}=0,& 0\leq i\leq N_x-1,
  \end{array}
  \right.
 \end{eqnarray}
 and initial conditions
 \begin{eqnarray}\label{initial condition}
 \left\{
 \begin{array}{lll}   
  \displaystyle Z_{i+1/2,j+1/2}^0=\phi^0_{i+1/2,j+1/2}, &0\leq i\leq N_x-1,0\leq j\leq N_y-1,\\
  \displaystyle U_{1,i,j+1/2}^{0}=u^0_{1,i,j+1/2}, &0\leq i\leq N_x,0\leq j\leq N_y,\\
  \displaystyle U_{2,i+1/2,j}^{0}=u^0_{2,i+1/2,j}, &0\leq i\leq N_x,0\leq j\leq N_y,
  \end{array}
  \right.
 \end{eqnarray}
  where $\phi^0$, $\textbf{u}^0$ are given initial conditions respectively.

Then, the  fully discrete SAV/CN  scheme based on the MAC discretization is as follows:
\begin{subequations}\label{e_model_r_full_discrete}
    \begin{align}
   & [d_tZ]^{n+1}=M[d_xD_xW+d_yD_yW]^{n+1/2}
   -\mathcal{P}_h^y\mathcal{P}_h^x[U_1D_x\tilde{Z}+U_2D_y\tilde{Z}]^{n+1/2},  \label{e_model_r_full_discrete1}\\
 &   W^{n+1/2}=-\lambda[d_xD_xZ+d_yD_yZ]^{n+1/2}
    +\lambda\frac{R^{n+1/2}}{\sqrt{E_1^h(\tilde{Z}^{n+1/2})+\delta}}F^{\prime}(\tilde{Z}^{n+1/2}),\label{e_model_r_full_discrete2}\\
 & d_tR^{n+1}=\frac{1}{2\sqrt{E_1^h(\tilde{Z}^{n+1/2})+\delta}}(F^{\prime}(\tilde{Z}^{n+1/2}),
d_tZ^{n+1})_{l^2,M},\label{e_model_r_full_discrete3}\\
&[d_tU_1]^{n+1}+\frac{\gamma}{2}[\tilde{U}_1D_x(\mathcal{P}_h^xU_1)+\mathcal{P}_h^xd_x(U_1\tilde{U}_1)+\mathcal{P}_h^y(\mathcal{P}_h^x\tilde{U}_2D_yU_1)\notag \\
&\hskip 1cm +d_y(\mathcal{P}_h^yU_1\mathcal{P}_h^x\tilde{U}_2)]^{n+1/2}
-\nu D_x(d_xU_1)^{n+1/2}-\nu d_y(D_yU_1)^{n+1/2} \label{e_model_r_full_discrete4}\\
&\hskip 1cm +[D_xP]^{n+1/2}=\mathcal{P}_h^x W^{n+1/2}[D_x\tilde{Z}]^{n+1/2},\notag\\
& [d_tU_2]^{n+1}+\frac{\gamma}{2}[\mathcal{P}_h^x(\mathcal{P}_h^y\tilde{U}_1D_xU_2)
+d_x(\mathcal{P}_h^y\tilde{U}_1\mathcal{P}_h^xU_2)+\tilde{U}_2D_y(\mathcal{P}_h^yU_2)\notag \\
&\hskip 1cm +\mathcal{P}_h^y(d_y(U_2\tilde{U}_2))]^{n+1/2}
-\nu D_y(d_yU_2)^{n+1/2}-\nu d_x(D_xU_2)^{n+1/2}
\label{e_model_r_full_discrete5} \\
&\hskip 1cm +[D_yP]^{n+1/2}=\mathcal{P}_h^yW^{n+1/2}[D_y\tilde{Z}]^{n+1/2},\notag\\
&[d_xU_1]^{n+1/2}+[d_yU_2]^{n+1/2}=0,  \label{e_model_r_full_discrete6}
\end{align}
\end{subequations}
where $\mathcal{P}_h^x$ and $\mathcal{P}_h^y$ are linear interpolation operators in the $x$ and $y$ directions respectively,  and $\tilde{{H}}^{n+1/2}=\frac 32{H}^n-\frac 12{H}^{n-1}$ for any sequence  $\{\textbf{H}^k\}$.

\medskip
\begin{rem}
 The above scheme can be efficiently solved using the strategies described in Remark \ref{rem1}. Moreover,  thanks to the structure of the MAC scheme, if $W^{n+1/2}$ is known, the pressure $P^{n+1/2}$ can be decoupled from 
\eqref{e_model_r_full_discrete4}-\eqref{e_model_r_full_discrete5}
by solving a discrete pressure Poisson equation. Hence, the above scheme can be very efficiently implemented. 
\end{rem}


 It is easy to verify that the following discrete integration-by-part formulae hold.
 \begin{lemma}  \label{lemma:U-P-Relation}
\cite{weiser1988convergence} Let $\{V_{1,i,j+1/2}\},\{V_{2,i+1/2,j}\}$ and $\{q_{1,i+1/2,j+1/2}\},\{q_{2,i+1/2,j+1/2}\}$
be discrete functions  with
$V_{1,0,j+1/2}=V_{1,N_x,j+1/2}=V_{2,i+1/2,0}=V_{2,i+1/2,N_y}=0$, with proper integers $i$ and $j$.
 Then there holds
 \begin{equation}
 \left\{
  \begin{array}{lll}
        (D_x q_1,V_1)_{l^2,T,M}&=& -( q_1, d_x V_1)_{l^2,M},\\
        (D_y q_2,V_2)_{l^2,M,T}&=& -( q_2, d_y V_2)_{l^2,M}.
  \end{array}
  \right.
 \end{equation}
\end{lemma}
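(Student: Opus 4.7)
The plan is to verify both identities by direct expansion into sums over grid points and then apply discrete Abel summation (summation by parts), using the homogeneous boundary conditions on $V_1$ and $V_2$ to kill the boundary contributions. Since the two identities are entirely symmetric in $x$ and $y$, I would give the full argument for the first one and indicate that the second follows \emph{mutatis mutandis}.

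First I would write out the left-hand side $(D_x q_1, V_1)_{l^2,T,M}$ explicitly. By the definition of the MAC-grid inner product on the $U_1$-nodes $(i,j+1/2)$ and by the definition of the operator $D_x$ acting on a function located at cell centers $(i+1/2,j+1/2)$, this becomes
\begin{equation*}
(D_x q_1, V_1)_{l^2,T,M} = \sum_{j=0}^{N_y-1} h_y \sum_{i=0}^{N_x} h_x^{\,\ast}\,\frac{q_{1,i+1/2,j+1/2}-q_{1,i-1/2,j+1/2}}{h_x}\, V_{1,i,j+1/2},
\end{equation*}
where $h_x^{\ast}$ denotes the appropriate (possibly half) weight attached to the staggered $U_1$-nodes and $q_{1,-1/2,\cdot}$, $q_{1,N_x+1/2,\cdot}$ are ghost values that will not appear after summation by parts. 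Then for each fixed $j$, I would apply the standard Abel summation formula in the index $i$, shifting the difference from $q_1$ onto $V_1$. The interior sum will reorganize as $-\sum_{i=0}^{N_x-1} h_x\, q_{1,i+1/2,j+1/2}\, (V_{1,i+1,j+1/2}-V_{1,i,j+1/2})/h_x$, which is exactly $-h_x\sum_i q_{1,i+1/2,j+1/2}\,[d_x V_1]_{i+1/2,j+1/2}$.

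The only subtle step is the boundary: the Abel procedure leaves residual terms of the form $q_{1,N_x-1/2,j+1/2}V_{1,N_x,j+1/2}$ and $q_{1,1/2,j+1/2}V_{1,0,j+1/2}$ (together with their ghost analogues, which cancel). By the hypothesis $V_{1,0,j+1/2}=V_{1,N_x,j+1/2}=0$, both of these vanish, and summing over $j$ with the weight $h_y$ recovers $-(q_1, d_x V_1)_{l^2,M}$. The argument for the $y$-identity is identical after swapping the roles of the two directions and using $V_{2,i+1/2,0}=V_{2,i+1/2,N_y}=0$.

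The main obstacle, if any, is purely bookkeeping: tracking the staggered indices (integer versus half-integer) and the correct weights $h_x^{\ast}$, $h_y^{\ast}$ attached to the $U_1$ and $U_2$ nodes so that the weights on the two sides of the identity match. There is no analytic content beyond discrete Abel summation. Since the claim is already recorded in \cite{weiser1988convergence} for the MAC grid, I would present the derivation concisely and reference that work for the detailed weight bookkeeping.
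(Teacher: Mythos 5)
Your proof is correct and is exactly the standard summation-by-parts verification; the paper itself omits the argument entirely, simply asserting the lemma with a citation to \cite{weiser1988convergence}, so there is no competing approach to compare against. One small simplification to your bookkeeping: by the Appendix definition, $(\cdot,\cdot)_{l^2,T,M}$ sums $i$ only from $1$ to $N_x-1$ (and $(\cdot,\cdot)_{l^2,M,T}$ sums $j$ from $1$ to $N_y-1$), so $D_xq_1$ is never evaluated at $i=0$ or $i=N_x$ and no ghost values of $q_1$ ever enter; the weights $h_i$ and $h_{i+1/2}$ cancel against the denominators of $D_x$ and $d_x$ respectively, and the boundary terms $q_{1,1/2,j+1/2}V_{1,0,j+1/2}$ and $q_{1,N_x-1/2,j+1/2}V_{1,N_x,j+1/2}$ vanish by hypothesis, exactly as you say.
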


 \begin{theorem}\label{thm_discrete total energy}
The scheme \eqref{e_model_r_full_discrete1}-\eqref{e_model_r_full_discrete6} is unconditionally energy stable in the sense that
 \begin{equation*}
 \tilde E^{n+1}(Z,\textbf{U},R)-\tilde E^n(Z,\textbf{U},R)=-M\Delta t \|D W^{n+1/2}\|_{l^2}^2-\nu \Delta t\|D \textbf{U}^{n+1/2}\|_{l^2}^2,
\end{equation*}
where $\textbf{D}H=(D_xH,D_yH)$  for any discrete scalar or vector function $H$, and
\begin{equation*}
 \tilde E^{n+1}(Z,\textbf{U},R)=\frac 12\|\textbf{U}\|_{l^2}^2+\lambda(\frac 12\|DZ^{n+1}\|_{l^2}^2+(R^{n+1})^2).
 \end{equation*}
\end{theorem}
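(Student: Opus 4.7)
The plan is to mirror the semi-discrete proof of Theorem 3.1, but carry all manipulations through in the MAC discrete inner products $(\cdot,\cdot)_{l^2,M}$, $(\cdot,\cdot)_{l^2,T,M}$, $(\cdot,\cdot)_{l^2,M,T}$, and use the discrete integration-by-parts of Lemma 3.3 in place of the continuous integration by parts.

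First, I take the discrete inner product of \eqref{e_model_r_full_discrete1} with $\Delta t\,W^{n+1/2}$ on the cell-centered grid, of \eqref{e_model_r_full_discrete2} with $\Delta t\,d_tZ^{n+1}$, multiply \eqref{e_model_r_full_discrete3} by $2\lambda\Delta t\,R^{n+1/2}$, and take the inner products of \eqref{e_model_r_full_discrete4} and \eqref{e_model_r_full_discrete5} with $\Delta t\,U_1^{n+1/2}$ and $\Delta t\,U_2^{n+1/2}$ on the respective MAC edge grids. Using the identity $2a^{n+1/2}(a^{n+1}-a^n)=|a^{n+1}|^2-|a^n|^2$ together with Lemma \ref{lemma:U-P-Relation}, the time-derivative terms produce exactly the differences
\[
\tfrac12\|\textbf{U}^{n+1}\|_{l^2}^2-\tfrac12\|\textbf{U}^{n}\|_{l^2}^2,\quad \tfrac{\lambda}{2}\|\textbf{D}Z^{n+1}\|_{l^2}^2-\tfrac{\lambda}{2}\|\textbf{D}Z^{n}\|_{l^2}^2,\quad \lambda|R^{n+1}|^2-\lambda|R^{n}|^2,
\]
i.e.\ the increment $\tilde E^{n+1}-\tilde E^n$, while the linear dissipative terms $M(d_xD_xW+d_yD_yW,W)^{n+1/2}$ and $-\nu(D(dU),U)^{n+1/2}$ yield, again via Lemma \ref{lemma:U-P-Relation}, the right-hand side $-M\Delta t\|\textbf{D}W^{n+1/2}\|_{l^2}^2-\nu\Delta t\|\textbf{D}\textbf{U}^{n+1/2}\|_{l^2}^2$.

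The substance of the proof lies in three cancellations that must be verified termwise. (i) The nonlinear potential contribution: taking the inner product of \eqref{e_model_r_full_discrete2} with $d_tZ^{n+1}$ produces $\lambda\frac{R^{n+1/2}}{\sqrt{E_1^h(\tilde Z^{n+1/2})+\delta}}(F'(\tilde Z^{n+1/2}),d_tZ^{n+1})_{l^2,M}$, which by \eqref{e_model_r_full_discrete3} equals $2\lambda R^{n+1/2}\,d_tR^{n+1}$ and therefore exactly matches the scalar-auxiliary contribution obtained from multiplying \eqref{e_model_r_full_discrete3} by $2\lambda R^{n+1/2}$. (ii) The phase-coupling between transport and surface tension: the transport term $\mathcal{P}_h^y\mathcal{P}_h^x[U_1D_x\tilde Z+U_2D_y\tilde Z]^{n+1/2}$ tested against $W^{n+1/2}$, after moving the cell-to-edge interpolants $\mathcal{P}_h^x,\mathcal{P}_h^y$ onto $W$ (they are self-adjoint with respect to the MAC inner products), produces precisely the negative of the forcing terms $(\mathcal{P}_h^x W^{n+1/2}[D_x\tilde Z]^{n+1/2},U_1^{n+1/2})+(\mathcal{P}_h^y W^{n+1/2}[D_y\tilde Z]^{n+1/2},U_2^{n+1/2})$ on the momentum side; these cancel identically.

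The main obstacle, and step (iii) above, is the treatment of the convective term $\gamma\,\tfrac12[\tilde U_1D_x(\mathcal{P}_h^xU_1)+\mathcal{P}_h^xd_x(U_1\tilde U_1)+\mathcal{P}_h^y(\mathcal{P}_h^x\tilde U_2D_yU_1)+d_y(\mathcal{P}_h^yU_1\mathcal{P}_h^x\tilde U_2)]^{n+1/2}$ and its $U_2$-counterpart when tested against $U_1^{n+1/2},U_2^{n+1/2}$. The specific symmetric splitting (half advective, half conservative) is designed exactly so that, after applying Lemma \ref{lemma:U-P-Relation} in each direction and collecting pairs, the result is skew-symmetric in the test argument, provided the extrapolated velocity $\tilde{\textbf{U}}^{n+1/2}$ satisfies the discrete divergence-free condition; this can be ensured either by using $(3\textbf{U}^n-\textbf{U}^{n-1})/2$ directly (with $\textbf{U}^{-1}=\textbf{U}^0$ and induction from \eqref{e_model_r_full_discrete6}) or by noting that the MAC interpolants preserve discrete incompressibility at the staggered locations. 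Once this skew-symmetry is checked, the convective contribution drops out. Finally, the pressure term $([D_xP]^{n+1/2},U_1^{n+1/2})+([D_yP]^{n+1/2},U_2^{n+1/2})$ vanishes by Lemma \ref{lemma:U-P-Relation} combined with the discrete incompressibility \eqref{e_model_r_full_discrete6}. Summing the five tested equations then delivers the claimed discrete energy law.
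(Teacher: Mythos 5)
Your proposal follows essentially the same route as the paper: test \eqref{e_model_r_full_discrete1} with $W^{n+1/2}$, \eqref{e_model_r_full_discrete2} with $d_tZ^{n+1}$, \eqref{e_model_r_full_discrete3} with $2\lambda R^{n+1/2}$, and the momentum equations with $\textbf{U}^{n+1/2}$, then use Lemma \ref{lemma:U-P-Relation} to produce the energy increment, the two dissipation terms, and the three cancellations you list. Items (i) and (ii) and the pressure term are handled exactly as in the paper's proof.

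The one place where your reasoning deviates is step (iii). You assert that the convective contribution is skew-symmetric \emph{provided} $\tilde{\textbf{U}}^{n+1/2}$ is discretely divergence-free, and you then sketch an induction to secure that property. This proviso is unnecessary, and leaning on it imports a hypothesis the paper never uses: the induction from \eqref{e_model_r_full_discrete6} only controls $d_x U_1^{n+1/2}+d_y U_2^{n+1/2}$ (the divergence of the \emph{average}), so concluding that each $\textbf{U}^n$, hence $\tilde{\textbf{U}}^{n+1/2}=(3\textbf{U}^n-\textbf{U}^{n-1})/2$, is divergence-free requires additionally that the discrete initial velocity be divergence-free, which is not assumed. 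The point of the half-advective/half-conservative splitting is precisely that the cancellation is \emph{pairwise and unconditional}: as in \eqref{e_Stability_Navier_Stokes1}, a single discrete integration by parts together with the self-adjointness of $\mathcal{P}_h^x,\mathcal{P}_h^y$ shows
$(\tilde{U}_1D_x(\mathcal{P}_h^xU_1),U_1)_{l^2,T,M}=-(\mathcal{P}_h^xd_x(U_1\tilde{U}_1),U_1)_{l^2,T,M}$,
so the first and second convective terms annihilate each other identically, and likewise for the third and fourth (and for the $U_2$-equation), with no information about $\nabla_h\cdot\tilde{\textbf{U}}^{n+1/2}$ needed. If you replace your divergence-free argument by this direct pairwise cancellation, your proof coincides with the paper's.
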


\begin{proof}  
Multiplying (\ref{e_model_r_full_discrete1}) by $W_{i+1/2,j+1/2}^{n+1/2}hk$, and making summation on $i,j$ for $0\leq i\leq N_x-1,\ 0\leq j\leq N_y-1$, we have 
\begin{equation}\label{e_Stability1}
\aligned
(d_tZ^{n+1},W^{n+1/2})_{l^2,M}=&M(d_xD_xW^{n+1/2}+d_yD_yW^{n+1/2},W^{n+1/2})_{l^2,M}\\
&-(\mathcal{P}_h^y\mathcal{P}_h^x[U_1D_x\tilde{Z}+U_2D_y\tilde{Z}]^{n+1/2},W^{n+1/2})_{l^2,M}.
\endaligned
\end{equation} 
Taking notice of Lemma \ref{lemma:U-P-Relation}, the first term on the right hand side of (\ref{e_Stability1}) can be transformed into the following:
\begin{equation}\label{e_Stability1_1}
\aligned
&M(d_xD_xW^{n+1/2}+d_yD_yW^{n+1/2},W^{n+1/2})_{l^2,M}\\
=&-M\|D_xW^{n+1/2}\|^2_{l^2,T,M}-M\|D_yW^{n+1/2}\|^2_{l^2,M,T}\\
=&-M\|\textbf{D}W^{n+1/2}\|_{l^2}.
\endaligned
\end{equation} 

Multiplying (\ref{e_model_r_full_discrete2}) by $d_tZ^{n+1}_{i+1/2,j+1/2}hk$, and making summation on $i,j$ for $0\leq i\leq N_x-1,~0\leq j\leq N_y-1$, we have 
\begin{equation}\label{e_Stability2}
\aligned
(d_tZ^{n+1},W^{n+1/2})_{l^2,M}=&-\lambda(d_xD_xZ^{n+1/2}+d_yD_yZ^{n+1/2},d_tZ^{n+1})_{l^2,M}\\
&+\lambda\frac{R^{n+1/2}}{\sqrt{E_1^h(\tilde{Z}^{n+1/2})+\delta}}
(F^{\prime}(\tilde{Z}^{n+1/2}), d_tZ^{n+1})_{l^2,M}.
\endaligned
\end{equation}
Recalling Lemma \ref{lemma:U-P-Relation}, the first term on the right hand side of (\ref{e_Stability2}) can be estimated by:
\begin{equation}\label{e_Stability2_1}
\aligned
-\lambda(d_xD_xZ^{n+1/2}&+d_yD_yZ^{n+1/2},d_tZ^{n+1})_{l^2,M}\\
=&\lambda(D_xZ^{n+1/2},d_tD_xZ^{n+1})_{l^2,T,M}+\lambda(D_yZ^{n+1/2},d_tD_yZ^{n+1})_{l^2,M,T}\\
=&\lambda\frac{\|\textbf{D}Z^{n+1}\|^2_{l^2}-\|\textbf{D}Z^n\|_{l^2}^2}{2\Delta t}.
\endaligned
\end{equation}
Multiplying equation (\ref{e_model_r_full_discrete3}) by $(R^{n+1}+R^{n})$ leads to 
\begin{equation}\label{e_Stability3}
\aligned
\frac{(R^{n+1})^2-(R^{n})^2}{\Delta t}=
\frac{R^{n+1/2}}{\sqrt{E_1^h(\tilde{Z}^{n+1/2})+\delta}}(F^{\prime}(\tilde{Z}^{n+1/2}),
d_tZ^{n+1})_{l^2,M}.
\endaligned
\end{equation}   
Combining (\ref{e_Stability3}) with (\ref{e_Stability1})-(\ref{e_Stability2_1}) gives that
\begin{equation}\label{e_Stability4}
\aligned
&\lambda\frac{(R^{n+1})^2-(R^{n})^2}{\Delta t}
+\lambda\frac{\|\textbf{D}Z^{n+1}\|^2_{l^2}-\|\textbf{D}Z^n\|_{l^2}^2}{2\Delta t}\\
=&-M\|\textbf{D}W^{n+1/2}\|_{l^2}^2-(\mathcal{P}_h^y\mathcal{P}_h^x[U_1D_x\tilde{Z}+U_2D_y\tilde{Z}]^{n+1/2},W^{n+1/2})_{l^2,M}.
\endaligned
\end{equation} 
Multiplying (\ref{e_model_r_full_discrete4}) by $U_{1,i,j+1/2}^{n+1/2}hk$, and making summation on $i,j$ for $1\leq i\leq N_x-1,\ 0\leq j\leq N_y-1$, we have 
\begin{equation}\label{e_Stability5}
\aligned
&(d_tU_1^{n+1},U_1^{n+1/2})_{l^2,T,M}+\frac{\gamma}{2}\left((\tilde{U}_1^{n+1/2}D_x(\mathcal{P}_h^xU_1^{n+1/2}),U_1^{n+1/2})_{l^2,T,M}\right.\\
&+(\mathcal{P}_h^xd_x(U_1^{n+1/2}\tilde{U}_1^{n+1/2}),U_1^{n+1/2})_{l^2,T,M}
+(\mathcal{P}_h^y(\mathcal{P}_h^x\tilde{U}_2^{n+1/2}D_yU_1^{n+1/2}),U_1^{n+1/2})_{l^2,T,M}\\
&+\left.(d_y(\mathcal{P}_h^yU_1^{n+1/2}\mathcal{P}_h^x\tilde{U}_2^{n+1/2}),U_1^{n+1/2})_{l^2,T,M}\right)+\nu\|d_x U^{n+1/2}_1\|^2_{l^2,M}\\
&+\nu\|D_yU^{n+1/2}_1\|^2_{l^2,T_y}-(P^{n+1/2},d_xU^{n+1/2}_1)_{l^2,M}\\
=&(\mathcal{P}_h^x W^{n+1/2}D_x\tilde{Z}^{n+1/2},U_1^{n+1/2})_{l^2,T,M}.
\endaligned
\end{equation} 
Thanks to Lemma \ref{lemma:U-P-Relation}, we have
\begin{equation}\label{e_Stability_Navier_Stokes1}
\aligned
(\tilde{U}_1^{n+1/2}&D_x(\mathcal{P}_h^xU_1^{n+1/2}),U_1^{n+1/2})_{l^2,T,M}\\
=&-(\mathcal{P}_h^xU_1^{n+1/2},d_x(\tilde{U}_1^{n+1/2}U_1^{n+1/2}))_{l^2,M}\\
=&-(\mathcal{P}_h^xd_x(\tilde{U}_1^{n+1/2}U_1^{n+1/2}),U_1^{n+1/2})_{l^2,T,M}.
\endaligned
\end{equation}  
The fifth term on the left hand side of (\ref{e_Stability5}) can be estimated as follows:
\begin{equation}\label{e_Stability_Navier_Stokes2}
\aligned
(d_y(\mathcal{P}_h^y U_1^{n+1/2}&\mathcal{P}_h^x\tilde{U}_2^{n+1/2}),U_1^{n+1/2})_{l^2,T,M}\\
=&-(\mathcal{P}_h^yU_1^{n+1/2}\mathcal{P}_h^x\tilde{U}_2^{n+1/2},D_yU_1^{n+1/2})_{l^2,M}\\
=&-(\mathcal{P}_h^y(\mathcal{P}_h^x\tilde{U}_2^{n+1/2}D_yU_1^{n+1/2}),U_1^{n+1/2})_{l^2,T,M}.
\endaligned
\end{equation} 
Multiplying (\ref{e_model_r_full_discrete5}) by $U_{2,i+1/2,j}^{n+1/2}hk$, and making summation on $i,j$ for $0\leq i\leq N_x-1,\ 1\leq j\leq N_y-1$, we can obtain
\begin{equation}\label{e_Stability6}
\aligned
&(d_tU_2^{n+1},U_2^{n+1/2})_{l^2,M,T}+\frac{\gamma}{2}\left((\mathcal{P}_h^x(\mathcal{P}_h^y\tilde{U}_1^{n+1/2}D_xU_2^{n+1/2}),U_2^{n+1/2})_{l^2,M,T}\right.\\
&+(d_x(\mathcal{P}_h^y\tilde{U}_1^{n+1/2}\mathcal{P}_h^xU_2^{n+1/2}),U_2^{n+1/2})_{l^2,M,T}+(\tilde{U}_2^{n+1/2}D_y(\mathcal{P}_h^yU_2^{n+1/2}),U_2^{n+1/2})_{l^2,M,T}\\
&\left.+(\mathcal{P}_h^y(d_y(U_2^{n+1/2}\tilde{U}_2^{n+1/2})),U_2^{n+1/2})_{l^2,M,T}\right)+\nu\|d_y U^{n+1/2}_2\|^2_{l^2,M}\\
&+\nu\|D_xU^{n+1/2}_2\|^2_{l^2,T_x}-(P^{n+1/2},d_yU^{n+1/2}_2)_{l^2,M}\\
=&(\mathcal{P}_h^y W^{n+1/2}D_y\tilde{Z}^{n+1/2},U_2^{n+1/2})_{l^2,M,T}.
\endaligned
\end{equation} 
Similar to the estimates of (\ref{e_Stability_Navier_Stokes1}) and (\ref{e_Stability_Navier_Stokes2}), we have
\begin{equation}\label{e_Stability_Navier_Stokes3}
\aligned
&(\mathcal{P}_h^x(\mathcal{P}_h^y\tilde{U}_1^{n+1/2}D_xU_2^{n+1/2}),U_2^{n+1/2})_{l^2,M,T}\\
&+(d_x(\mathcal{P}_h^y\tilde{U}_1^{n+1/2}\mathcal{P}_h^xU_2^{n+1/2}),U_2^{n+1/2})_{l^2,M,T}=0,
\endaligned
\end{equation} 
and 
\begin{equation}\label{e_Stability_Navier_Stokes4}
\aligned
&(\tilde{U}_2^{n+1/2}D_y(\mathcal{P}_h^yU_2^{n+1/2}),U_2^{n+1/2})_{l^2,M,T}\\
&+(\mathcal{P}_h^y(d_y(U_2^{n+1/2}\tilde{U}_2^{n+1/2})),U_2^{n+1/2})_{l^2,M,T}=0.
\endaligned
\end{equation}  
Combining (\ref{e_Stability5})-(\ref{e_Stability_Navier_Stokes4}) and recalling (\ref{e_model_r_full_discrete6}) lead to
\begin{equation}\label{e_Stability7}
\aligned
&\frac{\|\textbf{U}^{n+1}\|^2_{l^2}-\|\textbf{U}^{n}\|_{l^2}^2}{2\Delta t}
+\nu\|D\textbf{U}\|^2\\
=&(\mathcal{P}_h W^{n+1/2}D_x\tilde{Z}^{n+1/2},U_1^{n+1/2})_{l^2,T,M}
+(\mathcal{P}_h W^{n+1/2}D_y\tilde{Z}^{n+1/2},U_2^{n+1/2})_{l^2,M,T}.
\endaligned
\end{equation} 
Taking notice of (\ref{e_Stability4}), we have
\begin{equation}\label{e_Stability8}
\aligned
&\lambda\frac{(R^{n+1})^2-(R^{n})^2}{\Delta t}
+\lambda\frac{\|\textbf{D}Z^{n+1}\|^2_{l^2}-\|\textbf{D}Z^n\|_{l^2}^2}{2\Delta t}\\
&+\frac{\|\textbf{U}^{n+1}\|^2_{l^2}-\|\textbf{U}^{n}\|_{l^2}^2}{2\Delta t}
+\nu\|D\textbf{U}\|^2
=-M\|\textbf{D}W^{n+1/2}\|_{l^2}^2\leq 0,
\endaligned
\end{equation} 
which implies the desired result.   
\end{proof}

 \section{Error estimates} 
 In this section we carry out an error analysis for the full discrete  scheme \eqref{e_model_r_full_discrete1}-\eqref{e_model_r_full_discrete6} with $\gamma=0$, i.e. for the  Cahn-Hilliard-Stokes system. The analysis for the case of $\gamma=1$, i.e. for the Cahn-Hilliard-Navier-Stokes system,  will be extremely technical as it requires 
 a high order upwind method to deal with the nonlinear convection term. 

\subsection{An auxiliary problem}
 We consider first an auxiliary problem which will be used in the sequel.  

 Let $(\phi,\mu,\textbf{u},p)$ be the solution of Cahn-Hilliard-Stokes system, and set $\textbf{g}=\mu\nabla\phi-\frac{\partial \textbf{u}}{\partial t}$.
 For each time step $n$, we rewrite \eqref{e_modelC}-\eqref{e_modelD} with $\gamma=0$ as
  \begin{subequations}\label{e_auxiliary}
    \begin{align}
   -\nu\Delta\textbf{u}^n+\nabla p^n=\textbf{g}^n
     \quad &\ in\ \Omega\times J,
      \label{e_auxiliaryA}\\
      \nabla\cdot\textbf{u}^n=0
      \quad &\ in\ \Omega\times J,
      \label{_auxiliaryB}
    \end{align}
  \end{subequations}
and consider its approximation by the MAC scheme:
 For each $n=1,\ldots,N$, let $\{\widehat{U}^{n}_{1,i,j+1/2}\}, \{\widehat{U}^{n}_{2,i+1/2,j}\}$ and $\{\widehat{P}^{n}_{i+1/2,j+1/2}\}$ such that
\begin{align}
&-\nu \frac{d_x\widehat{U}^{n+1/2}_{1,i+1/2,j+1/2}-d_x\widehat{U}^{n+1/2}_{1,i-1/2,j+1/2}}{h_i}
-\nu \frac{D_y\widehat{U}^{n+1/2}_{1,i,j+1}-D_y\widehat{U}^{n+1/2}_{1,i,j}}{k_{j+1/2}}\notag\\
&~~~~~+D_x\widehat{P}_{i,j+1/2}^{n+1/2}=g_{1,i,j+1/2}^{n+1/2},\ \ 1\leq i\leq N_x-1,0\leq j\leq N_y-1,\label{e36}\\
&-\nu \frac{D_x\widehat{U}^{n+1/2}_{1,i+1,j}-D_x\widehat{U}^{n+1/2}_{1,i,j}}{h_{i+1/2}}
-\nu \frac{d_y\widehat{U}^{n+1/2}_{2,i+1/2,j+1/2}-d_y\widehat{U}^{n+1/2}_{2,i+1/2,j-1/2}}{k_{j}}\notag\\
&~~~~~+D_y\widehat{P}_{i+1/2,j}^{n+1/2}=g_{2,i+1/2,j}^{n+1/2},\ \ 0\leq i\leq N_x-1,1\leq j\leq N_y-1,\label{e37}\\
&d_x\widehat{U}^{n+1/2}_{1,i+1/2,j+1/2}+d_y\widehat{U}^{n+1/2}_{2,i+1/2,j+1/2}=0,\ \ 0\leq i\leq N_x-1,0\leq j\leq N_y-1,\label{e38}
\end{align}
where the boundary and initial approximations are same as equations \eqref{boundary condition} and \eqref{initial condition}.

Inspired by \cite{dawson1998two}, we extend the work in Rui and Li \cite{rui2017stability} to the above approximation. By following closely the same arguments as in \cite{rui2017stability}, we can prove the following:
\begin{lemma}\label{le_auxiliary} 
Assuming that $\textbf{u}\in W^{3}_{\infty}(J;W^{4}_{\infty}(\Omega))^2$, $p\in W^{3}_{\infty}(J;W^{3}_{\infty}(\Omega))$, we have the following results:
\begin{equation}\label{e127}
\aligned
\|d_x(\widehat{U}^{n+1}_1-{u}^{n+1}_1)\|_{l^2,M}+\|d_y(\widehat{U}^{n+1}_2-{u}^{n+1}_2)\|_{l^2,M}
\leq
O(\Delta t^2+h^2+k^2),
\endaligned
\end{equation}
\begin{equation}\label{e39}
\aligned
\|d_t(\widehat{U}^{n+1}_1-{u}^{n+1}_1)\|_{l^2,T,M}+\|d_t(\widehat{U}^{n+1}_2-{u}^{n+1}_2)\|_{l^2,M,T}
\leq O(\Delta t^2+h^2+k^2),
\endaligned
\end{equation}
\begin{equation}\label{e128}
\aligned
\|\widehat{U}^{n+1}_1-{u}^{n+1}_1\|_{l^2,T,M}+\|\widehat{U}^{n+1}_2-{u}^{n+1}_2\|_{l^2,M,T}
\leq O(\Delta t^2+h^2+k^2),
\endaligned
\end{equation}

\begin{equation}\label{e132}
\aligned
& \|D_y(\widehat{U}^{n+1}_1-{u}^{n+1}_1)\|_{l^2,T_y}\leq O(\Delta t^2+h^2+k^{3/2}),
\endaligned
\end{equation}
\begin{equation}\label{e134}
\aligned
&\|D_x(\widehat{U}^{n+1}_2-{u}^{n+1}_2)\|_{l^2,T_x}\leq O(\Delta t^2+h^{3/2}+k^2),
\endaligned
\end{equation}

\begin{equation}\label{e131}
\aligned
&\left(\sum\limits_{l=1}^{N}\Delta t\|(\widehat{Z}-p)^{l-1/2}\|^2_{l^2,M}\right)^{1/2}\leq O(\Delta t^2+h^2+k^2).
\endaligned
\end{equation}
\end{lemma}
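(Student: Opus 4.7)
The plan is to adapt the stationary MAC error analysis of Rui and Li \cite{rui2017stability} time-step by time-step. The key observation is that the auxiliary problem \eqref{e_auxiliary} is a generalized Stokes system with no time derivative on the left-hand side, so the Crank-Nicolson averaging in \eqref{e36}--\eqref{e38} reduces at each $n$ to a stationary MAC problem with data $g^{n+1/2}$. Consequently the time and space errors decouple: temporal Taylor expansion of the exact $(\textbf{u},p)$ about $t^{n+1/2}$ contributes an $O(\Delta t^2)$ truncation, thanks to the assumed $W^3_\infty$-regularity in time, while the spatial truncation is exactly the one analyzed in \cite{rui2017stability}. I would first set up the error equations for $\xi_\ell^n = \widehat{U}_\ell^n - u_\ell^n$ and $\eta^n = \widehat{P}^n - p^n$ by subtracting the discrete system from a Taylor-expanded version of \eqref{e_auxiliary}, thereby isolating the temporal and spatial truncation terms.

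Next I would derive the basic energy bound by testing the two discrete momentum residuals with $\xi_1^{n+1/2}$ and $\xi_2^{n+1/2}$ respectively and summing. Since $\nabla\cdot\textbf{u}=0$ and the discrete divergence of $\widehat{\textbf{U}}$ vanishes exactly, the pressure error drops out via Lemma \ref{lemma:U-P-Relation}, producing
\begin{equation*}
\nu\bigl(\|d_x\xi_1^{n+1/2}\|_{l^2,M}^2+\|D_y\xi_1^{n+1/2}\|_{l^2,T_y}^2+\|D_x\xi_2^{n+1/2}\|_{l^2,T_x}^2+\|d_y\xi_2^{n+1/2}\|_{l^2,M}^2\bigr)\le (\text{trunc.},\xi^{n+1/2}).
\end{equation*}
The interior spatial truncation is $O(h^2+k^2)$ on the normal-derivative components, yielding \eqref{e127}; however, the mixed-derivative truncation from $d_y D_y \widehat{U}_1$ and $d_x D_x \widehat{U}_2$ degrades to $O(h^{3/2})$ or $O(k^{3/2})$ in a $\sqrt{h}$- or $\sqrt{k}$-wide strip along $\partial\Omega$, so Cauchy--Schwarz on the strip yields the weaker rates \eqref{e132}--\eqref{e134}. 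The bound \eqref{e128} then follows from a discrete Poincaré inequality on the staggered grid.

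For \eqref{e39} I would time-difference both the PDE and the scheme, obtaining an identical Stokes-type error equation for $(d_t\xi^{n+1},d_t\eta^{n+1})$ whose right-hand side is the $d_t$ of the previous truncation. Testing with $d_t\xi^{n+1/2}$ and using the same divergence-free elimination of pressure gives the estimate after summation in $n$; the required bounds on $d_t g^{n+1/2}$ come from the $W^3_\infty$-regularity assumptions. Finally, the pressure estimate \eqref{e131} is recovered by treating each momentum residual as $\textbf{D}\eta^{n+1/2} = \nu\Delta_h\xi^{n+1/2}+\text{trunc.}$ and invoking the discrete MAC inf-sup condition (equivalently the discrete pressure Poisson reconstruction) to bound $\|\eta^{n+1/2}\|_{l^2,M}$ by the already-estimated velocity derivatives and truncation terms.

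The main obstacle is the careful boundary bookkeeping in step two: the MAC truncation loses a half power of $h$ on cross-derivative stencils abutting $\partial\Omega$, and getting the sharp $O(h^{3/2})$ rates in \eqref{e132}--\eqref{e134} rather than the naive $O(h)$ requires a weighted-strip argument rather than a uniform bound. Everything else, including the pressure recovery via inf-sup, is essentially a transcription of \cite{rui2017stability} with the temporal average $t^{n+1/2}$ carried as a passive index.
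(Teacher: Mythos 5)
Your proposal follows essentially the same route as the paper, which itself gives no self-contained argument here but simply states that the lemma follows "by following closely the same arguments as in" Rui and Li \cite{rui2017stability}, extended via \cite{dawson1998two}; your reading of the auxiliary system as a stationary MAC--Stokes problem at each half time level, with the temporal error separated off as an $O(\Delta t^2)$ truncation and the spatial rates (including the half-order loss on the cross-derivative stencils near $\partial\Omega$ and the inf-sup recovery of the pressure) imported from that reference, is exactly the intended adaptation. The only point worth tightening is that the scheme \eqref{e36}--\eqref{e38} is posed at half levels while the conclusions \eqref{e127}--\eqref{e134} are stated at integer levels, so one should note explicitly that $\widehat{U}^{n+1}=2\widehat{U}^{n+1/2}-\widehat{U}^{n}$ is recovered without loss of accuracy (e.g.\ by the smooth dependence of the stationary MAC error on the data, so the alternating sum telescopes).
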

\medskip
\subsection{ discrete LBB condition}
In order to carry out error analysis, we need the discrete LBB condition.

 Here we use the same notation and results as Rui and Li \cite[Lemma 3.3]{rui2017stability}.  
 Let $$b(\textbf{v},q)=-\int_\Omega ~qdiv \textbf{v}dx,~\textbf{v}\in \textbf{V},~q\in W,$$ where
\begin{align*}
&\textbf{V}=H^1_0(\Omega)\times H^1_0(\Omega),
\quad W=\left\{q\in L^2(\Omega): \int_\Omega qdx=0\right\}.
\end{align*}
\begin{figure}[htp]
\centering
\includegraphics[scale=0.26]{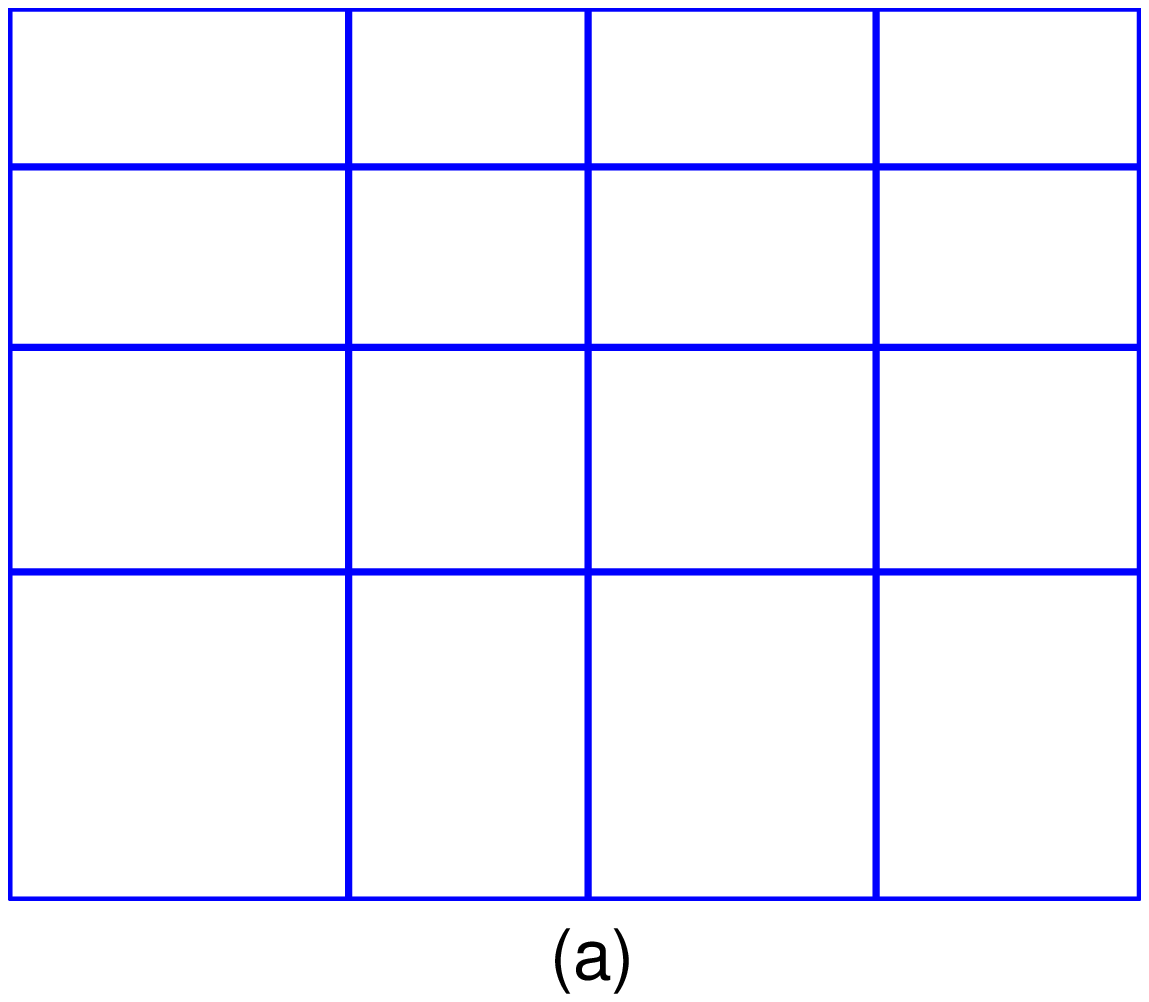}
\includegraphics[scale=0.26]{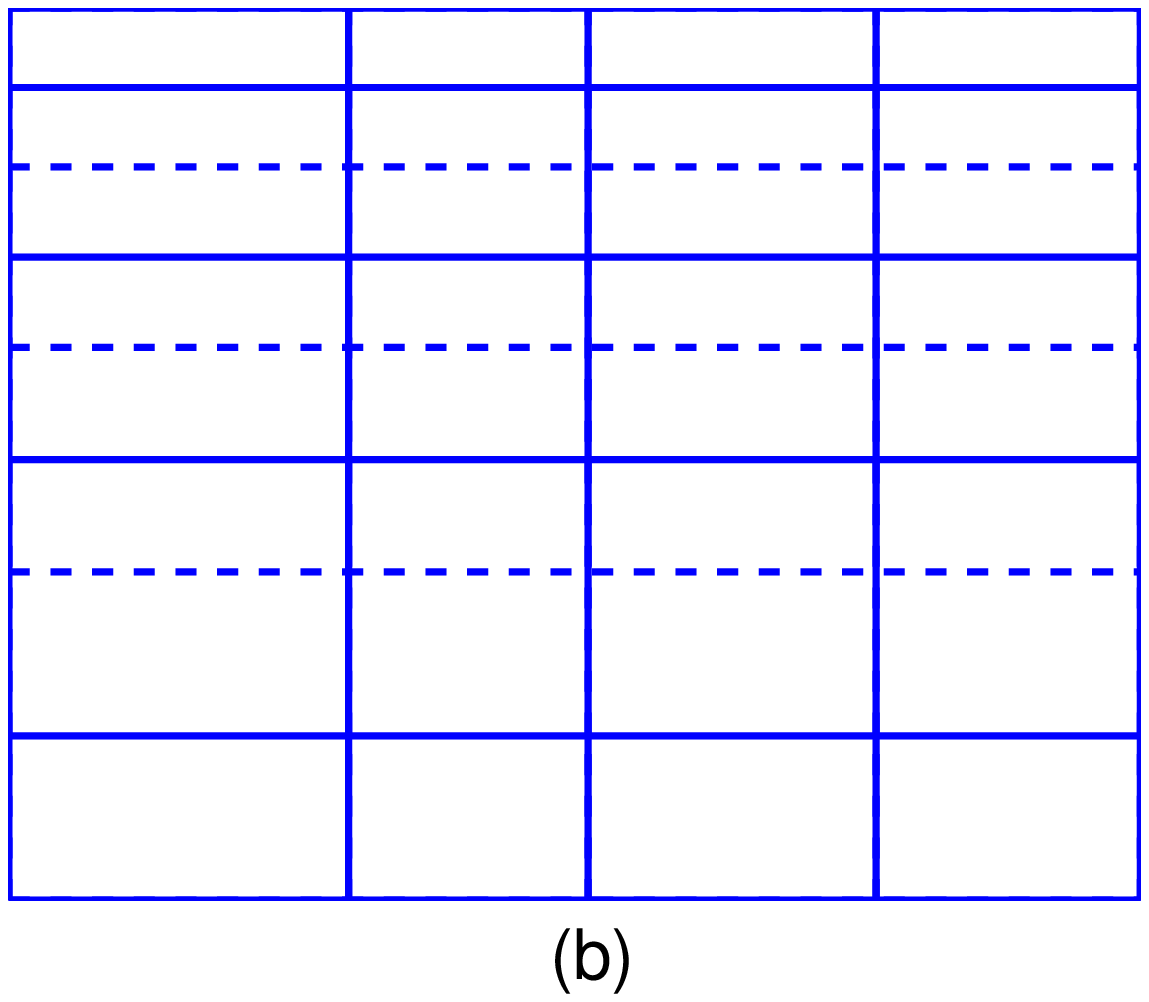}
\includegraphics[scale=0.26]{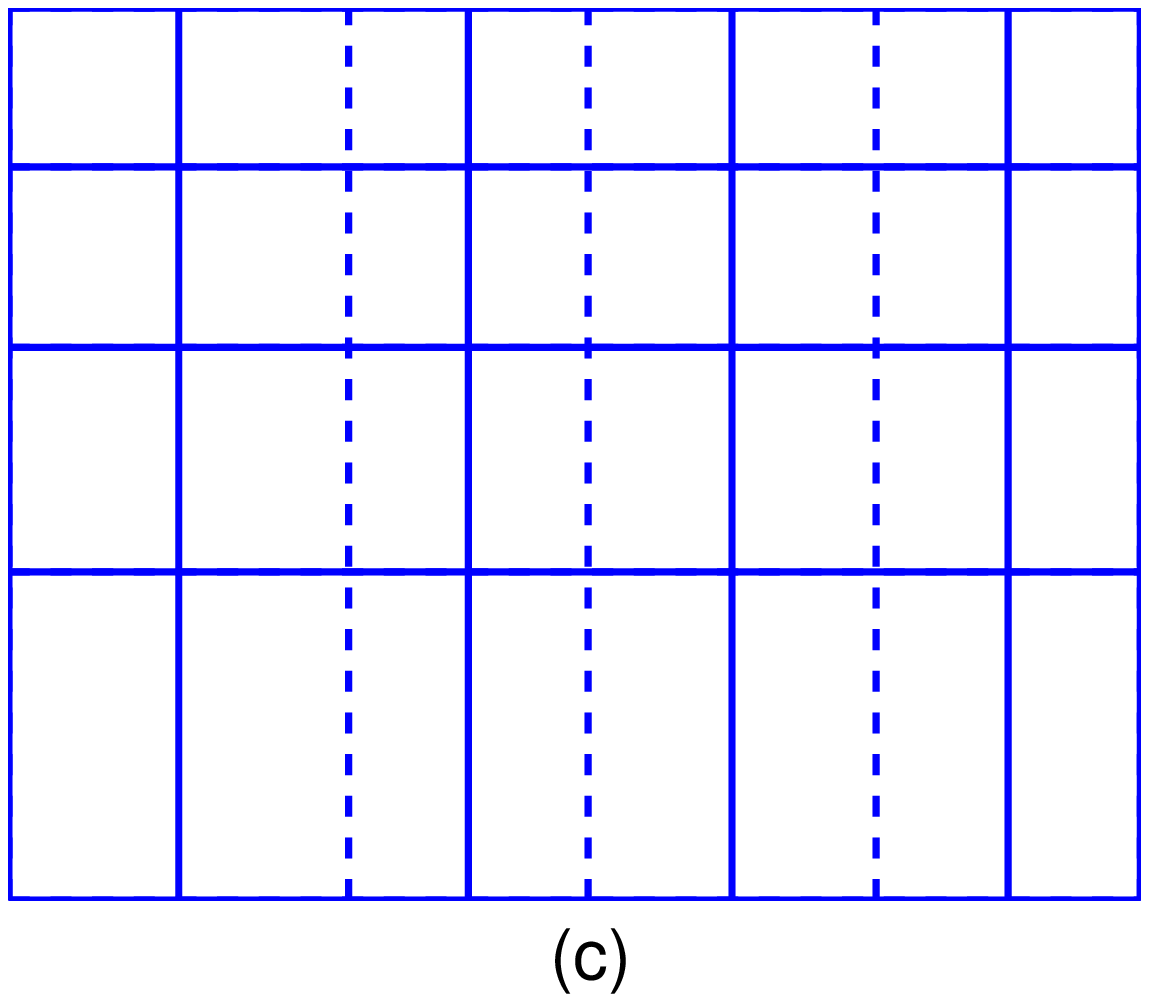}
\caption{Partitions: (a) $\mathcal{T}_h$, (b) $\mathcal{T}_h^1$, (c) $\mathcal{T}_h^2$}\label{fig1}
\end{figure}

Then we construct the finite-dimensional subspaces of $W$ and $\textbf{V}$ by introducing three different partitions $\mathcal{T}_h,\mathcal{T}_h^1,\mathcal{T}_h^2$ of $\Omega$.
The original partition $\delta_x\times \delta_y$ is denoted by $\mathcal{T}_h$ (see Fig \ref{fig1}). The partition $\mathcal{T}_h^1$ is generated by connecting all the midpoints of the vertical sides of $\Omega_{i+1/2,j+1/2}$ and extending the resulting mesh to the boundary $\Gamma$. Similarly, for all $\Omega_{i+1/2,j+1/2}\in \mathcal{T}_h$ we connect all the midpoints of the horizontal sides of $\Omega_{i+1/2,j+1/2}$ and extend
the resulting mesh to the boundary $\Gamma$, then the third partition is obtained which is denoted by $\mathcal{T}_h^2$.

Corresponding to the  quadrangulation $\mathcal{T}_h$,
define $W_h$, a subspace of $W$,
 $$W_h=\left\{q_h:~q_h|_T=constant,~\forall T\in \mathcal{T}_h~and \int_\Omega qdx=0\right\}.$$
  Furthermore, let $\textbf{V}_h$ be a subspace of $\textbf{V}$ such that $\textbf{V}_h$=$S_h^1\times S_h^2$, where
\begin{align*}
&S_h^l=\left\{g\in C^{(0)}(\overline{\Omega}):~g|_{T^l}\in Q_1(T^l),~,\forall T^l\in \mathcal{T}_h^l,~and~g|_\Gamma=0\right\},~l=1,2,\\
\end{align*}
and $Q_1$ denotes the space of all polynomials of degree $\leq 1$ with respect to each of the two variables $x$ and $y$.

Then we introduce the bilinear forms
$$b_h(\textbf{v}_h,q_h)=-\sum_{\Omega_{i+1/2,j+1/2}\in \mathcal{T}_h}\int_{\Omega_{i+1/2,j+1/2}} q_h \Pi_h(div\textbf{v}_h)dx,~\textbf{v}_h\in \textbf{V}_h,~q_h\in W_h,$$
where
\begin{align*}
\Pi_h:~&C^{(0)}(\overline{\Omega}_{i+1/2,j+1/2})\rightarrow Q_0(\Omega_{i+1/2,j+1/2}), ~such~that\\
&(\Pi_h\varphi)_{i+1/2,j+1/2}=\varphi_{i+1/2,j+1/2},~~\forall ~\Omega_{i+1/2,j+1/2}\in \mathcal{T}_h.
\end{align*}
 Then, we have the following result: 
\begin{lemma}\label{le_LBB}
There is a constant $\beta>0$, independent of $h$ and $k$ such that
\begin{equation}\label{e16}
\sup\limits_{\textbf{v}_h\in\textbf{V}_h}\frac{b_h(\textbf{v}_h,q_h)}{\|D\textbf{v}_h\|}\geq \beta\|q_h\|_{l^2,M}~~\forall q_h\in W_h.
\end{equation}
\end{lemma}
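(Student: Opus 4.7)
The plan is to prove the discrete inf-sup condition via the Fortin criterion, which reduces matters to the continuous LBB condition that is known to hold on $(\textbf{V}, W)$. Concretely, I want to construct a bounded projection $\Pi_F : \textbf{V} \to \textbf{V}_h$ such that
\begin{equation*}
b_h(\Pi_F\textbf{v}, q_h) \;=\; b(\textbf{v}, q_h) \qquad \forall\, q_h \in W_h,
\end{equation*}
together with the $H^1$-stability bound $\|D(\Pi_F \textbf{v})\|_{l^2} \leq C\|\nabla \textbf{v}\|_{L^2}$ with $C$ independent of $h$ and $k$. Once such a $\Pi_F$ is available, the standard Fortin argument gives: for any $q_h \in W_h$, pick $\textbf{v} \in \textbf{V}$ realizing the continuous inf-sup with constant $\beta_0>0$, and then
\begin{equation*}
\sup_{\textbf{v}_h \in \textbf{V}_h} \frac{b_h(\textbf{v}_h, q_h)}{\|D\textbf{v}_h\|} \;\geq\; \frac{b_h(\Pi_F \textbf{v}, q_h)}{\|D(\Pi_F \textbf{v})\|} \;=\; \frac{b(\textbf{v}, q_h)}{\|D(\Pi_F \textbf{v})\|} \;\geq\; \frac{\beta_0}{C}\,\|q_h\|_{l^2,M},
\end{equation*}
which gives the claim with $\beta=\beta_0/C$.

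The construction of $\Pi_F$ is dictated by the MAC staggering. For $\textbf{v}=(v_1,v_2) \in \textbf{V}$, I would define the nodal values of $\Pi_F\textbf{v} \in S_h^1 \times S_h^2$ by edge averages adapted to each staggered grid, namely
\begin{equation*}
(\Pi_F \textbf{v})_{1,\,i,j+1/2} \;=\; \frac{1}{k_{j+1/2}}\int_{y_j}^{y_{j+1}} v_1(x_i,y)\,\mathrm{d}y, \qquad
(\Pi_F \textbf{v})_{2,\,i+1/2,j} \;=\; \frac{1}{h_{i+1/2}}\int_{x_i}^{x_{i+1}} v_2(x,y_j)\,\mathrm{d}x,
\end{equation*}
with zero values on $\partial\Omega$ (compatible with $\textbf{v}\in H^1_0$). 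The key algebraic identity is that, on each cell $\Omega_{i+1/2,j+1/2}\in\mathcal{T}_h$,
\begin{equation*}
\Pi_h\bigl(\mathrm{div}\,\Pi_F\textbf{v}\bigr)\big|_{i+1/2,j+1/2}
\;=\;\bigl[d_x(\Pi_F\textbf{v})_1 + d_y(\Pi_F\textbf{v})_2\bigr]_{i+1/2,j+1/2}
\;=\;\frac{1}{|T|}\int_{\partial T} \textbf{v}\cdot\textbf{n}\,\mathrm{d}s,
\end{equation*}
because the $Q_1$ interpolation on the shifted grids $\mathcal{T}_h^1,\mathcal{T}_h^2$ converts the pointwise $x$- and $y$-differences of edge averages into the MAC divergence at cell centers. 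Since $q_h$ is piecewise constant on $\mathcal{T}_h$, multiplying by $q_h|_T$, summing over $T$, and using $\int_{\partial T}\textbf{v}\cdot\textbf{n}=\int_T\mathrm{div}\,\textbf{v}$ yields $b_h(\Pi_F\textbf{v}, q_h) = b(\textbf{v},q_h)$.

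The main obstacle is the $H^1$-stability estimate $\|D(\Pi_F \textbf{v})\|_{l^2}\leq C\|\nabla\textbf{v}\|_{L^2}$. I would prove it by a local scaling/Bramble--Hilbert argument on each cell of $\mathcal{T}_h^l$: the edge-average based nodal functionals are bounded on $H^1(\hat T)$ over a reference cell $\hat T$ by a trace-type inequality, so the corresponding $Q_1$ interpolant satisfies $\|\nabla \widehat{\Pi_F \textbf{v}}\|_{L^2(\hat T)} \leq C\|\widehat{\textbf{v}}\|_{H^1(\widehat\omega_T)}$ on a local patch; scaling back and using shape-regularity of the (possibly non-uniform) grid gives the bound summed over cells, and comparing the discrete norm $\|D\cdot\|_{l^2}$ on the piecewise $Q_1$ space with $\|\nabla\cdot\|_{L^2}$ (which differ only by a fixed constant on tensor-product $Q_1$ elements) completes the proof. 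The remaining task is to verify that the above $\Pi_F$ indeed lands in $\textbf{V}_h \subset H^1_0$, which is immediate since edge averages of $H^1_0$-functions vanish on $\partial\Omega$ and the $Q_1$ extension preserves continuity. This is precisely the reasoning carried out in Lemma 3.3 of Rui and Li \cite{rui2017stability}, so I would invoke that result after checking that the space structure here matches theirs.
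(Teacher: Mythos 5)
Your proposal is correct, and it supplies the standard Fortin-operator argument (edge-average interpolation onto the staggered $Q_1$ spaces, the commuting identity with the MAC divergence, and local trace/Bramble--Hilbert stability) that underlies the cited result; the paper itself gives no proof of this lemma, deferring entirely to Lemma 3.3 of Rui and Li \cite{rui2017stability}, which proceeds along exactly these lines. So your route matches the (referenced) proof, with the only caveat being that the $H^1$-stability step you flag as the main obstacle is indeed where all the technical work lives, and the paper's uniform-mesh assumption is what makes the scaling argument clean.
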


\subsection{A first error estimate with a $L^\infty$ bound assumption}
we shall first  derive an error estimate assuming  that there exists two positive constant $C_*$ and $C^*$ such that 
  \begin{subequations}\label{e_hypotheses}
    \begin{align}
    &\displaystyle \|Z^n\|_{\infty}\leq C_*,
    \label{e_hypothesesA}\\
     &\displaystyle \|\textbf{D}Z^n\|_{\infty}\leq C^*.
      \label{e_hypothesesB}
    \end{align}
  \end{subequations}
  Late we shall verify this assumption using an induction process.
  
We define the operator $\textbf{I}_h:~\textbf{V}\rightarrow \textbf{V}_h,$ such that
\begin{equation}\label{e_H1 projection}
\aligned
(\nabla\cdot \textbf{I}_h\textbf{v},w)=(\nabla\cdot \textbf{v},w) \ \forall w\in W_h,
\endaligned
\end{equation}
with approximation properties \cite{dawson1998two}
\begin{align}
\|\textbf{v}-\textbf{I}_h\textbf{v}\|\leq &C\|\textbf{v}\|_1\hat{h}, \label{e_H1 projection_error1}\\
\|\nabla\cdot(\textbf{v}-\textbf{I}_h\textbf{v})\|\leq &C\|\nabla\cdot\textbf{v}\|_1\hat{h},\label{e_H1 projection_error2}
\end{align}
where $\hat{h}=\max\{h,k\}$.

Besides, by the definition of $\textbf{I}_h\textbf{v}$ and the midpoint rule of integration,
the $L^\infty$ norm of the projection is obtained by
\begin{equation}\label{e_H1 projection_error3}
\|\textbf{v}-\textbf{I}_h\textbf{v}\|_{\infty}\leq C\|\textbf{v}\|_{W_{\infty}^2(\Omega)}\hat{h}.
\end{equation}

Furthermore from Dur{\'a}n \cite{duran1990superconvergence}, we have the following 
 estimates which is necessary for the derivative and analysis of our numerical scheme:
 \begin{equation}\label{e_H1 projection_error4}
\|\textbf{v}-\textbf{I}_h\textbf{v}\|_{l^2}\leq C\hat{h}^2.
\end{equation}

For simplicity, we set
\begin{equation*}
\aligned
&\displaystyle e_{\phi}^n=Z^n-\phi^n,
\ \displaystyle e_{\mu}^{n}=W^{n}-\mu^n,
\ \displaystyle e_{r}^n=R^n-r^n,\\
&\displaystyle e_{\textbf{u}}^n=\textbf{U}^n-\widehat{\textbf{U}}^n+\widehat{\textbf{U}}^n-\textbf{u}^n=\widehat{e}_{\textbf{u}}^n+\widetilde{e}_{\textbf{u}}^n,\\
& \displaystyle e_{p}^n=P^n-\widehat{P}^n+\widehat{P}^n-p^n=
\widehat{e}_{p}^n+\widetilde{e}_{p}^n.
\endaligned
\end{equation*}


\begin{lemma}\label{lem: error_estimates1}
Suppose that the hypotheses (\ref{e_hypotheses}) hold, and $\phi\in W^{3}_{\infty}(J;W^{4}_{\infty}(\Omega)),\mu\in L^{\infty}(J;W^{4}_{\infty}(\Omega))$, $\textbf{u}\in W^{3}_{\infty}(J;W^{4}_{\infty}(\Omega))^2$, $p\in W^{3}_{\infty}(J;W^{3}_{\infty}(\Omega))$, then the approximate errors of discrete phase function and chemical potential satisfy
\begin{equation}\label{e_error_estimate13**}
\aligned
&\|e_{\phi}^{m+1}\|_{l^2,M}^2+\frac{M}{2}\sum\limits_{n=0}^{m}\Delta t\|e_{\mu}^{n+1/2}\|_{l^2,M}^2
+\lambda(e_r^{m+1})^2\\
&+\frac{\lambda}{2}\|\textbf{D}e_{\phi}^{m+1}\|^2_{l^2}+\frac{M}{4}\sum_{n=0}^{m}\Delta t\|\textbf{D}e_{\mu}^{n+1/2}\|^2_{l^2}\\
\leq&C\sum_{n=0}^{m+1}\Delta t\|\textbf{D}e_{\phi}^n\|_{l^2}^2+C\sum_{n=0}^{m}\Delta t\|\widehat{e}_{\textbf{u}}^{n+1/2}\|_{l^2}^2\\
&+C\sum_{n=0}^{m+1}\Delta t\|e_{\phi}^n\|_{l^2,M}^2
+C\sum_{n=0}^{m+1}\Delta t(e_r^{n})^2\\
&+C(\Delta t^4+h^4+k^4),\quad   \ m\leq N,
\endaligned
\end{equation}
where the positive constant $C$ is independent of $h$, $k$ and $\Delta t$.
\end{lemma}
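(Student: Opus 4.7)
The plan is to imitate the energy-stability argument of Theorem 3.4 at the level of errors. First I would derive the error equations by evaluating the Cahn--Hilliard--Stokes system at $t^{n+1/2}$ and subtracting the scheme \eqref{e_model_r_full_discrete1}--\eqref{e_model_r_full_discrete3}, producing truncation residuals $T_\phi^{n+1/2}, T_\mu^{n+1/2}, T_r^{n+1/2}$ of size $O(\Delta t^2 + h^2 + k^2)$ in the appropriate discrete norms (using standard Crank--Nicolson/extrapolation Taylor expansions together with the regularity hypotheses on $\phi,\mu,\textbf{u},p$ and consistency of the MAC differences). The error in the velocity is split via the MAC auxiliary problem as $e_\textbf{u}^n = \widehat{e}_\textbf{u}^n + \widetilde{e}_\textbf{u}^n$, so that the $O(\Delta t^2+h^2+k^2)$ bounds from Lemma~\ref{le_auxiliary} immediately absorb the $\widetilde{e}_\textbf{u}$ contributions; only $\widehat{e}_\textbf{u}$ will remain on the right-hand side of \eqref{e_error_estimate13**}.

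Next I would test the error equations the same way as in the stability proof: pair the $\phi$-error equation with $e_\mu^{n+1/2}$ in $(\cdot,\cdot)_{l^2,M}$, the $\mu$-error equation with $d_t e_\phi^{n+1}$, and multiply the $r$-error equation by $2\lambda e_r^{n+1/2}$. Summing these identities, the cross-terms $(d_t e_\phi^{n+1}, e_\mu^{n+1/2})_{l^2,M}$ and the SAV coupling terms will telescope/cancel thanks to exactly the algebra used in Theorem~\ref{thm_discrete total energy}, producing the discrete time-derivatives of $\tfrac12\|\textbf{D}e_\phi^{n+1}\|_{l^2}^2$ and $\lambda(e_r^{n+1})^2$, together with the dissipation $M\|\textbf{D}e_\mu^{n+1/2}\|_{l^2}^2$. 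The $\|e_\phi^{m+1}\|_{l^2,M}^2$ term in \eqref{e_error_estimate13**} comes from an additional test of the $\mu$-equation with $e_\phi^{n+1/2}$ (or equivalently by using the $\mu$ relation to replace $\lambda\|\textbf{D}e_\phi\|$ control by $\|e_\phi\|$ control), and then summing in $n=0,\ldots,m$.

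The residual terms that must be estimated are (i) the convection mismatch
\[
\bigl(\mathcal{P}_h^y\mathcal{P}_h^x[U_1 D_x\tilde Z + U_2 D_y\tilde Z]^{n+1/2} - \textbf{u}^{n+1/2}\!\cdot\!\nabla\tilde\phi^{n+1/2},\; e_\mu^{n+1/2}\bigr)_{l^2,M},
\]
which I split as $(\textbf{U}-\textbf{u})\cdot D\tilde Z + \textbf{u}\cdot D(\tilde Z-\tilde\phi)$ plus consistency of $\mathcal{P}_h^{x,y}$; the $L^\infty$ assumptions \eqref{e_hypothesesA}--\eqref{e_hypothesesB} on $Z$ and $\textbf{D}Z$ together with the regularity of $\textbf{u}$ make each factor bounded, so Cauchy--Schwarz and Young produce exactly $\|\widehat{e}_\textbf{u}^{n+1/2}\|_{l^2}^2$ and $\|\textbf{D}e_\phi^{n+1/2}\|_{l^2}^2$ contributions, plus absorbable $\varepsilon\|\textbf{D}e_\mu^{n+1/2}\|_{l^2}^2$. (ii) The nonlinear SAV term
\[
\frac{R^{n+1/2}}{\sqrt{E_1^h(\tilde Z^{n+1/2})+\delta}}F'(\tilde Z^{n+1/2}) - \frac{r^{n+1/2}}{\sqrt{E_1(\tilde\phi^{n+1/2})+\delta}}F'(\tilde\phi^{n+1/2})
\]
is handled by adding and subtracting intermediate factors; the $L^\infty$ bound \eqref{e_hypothesesA} makes $F',F''$ bounded uniformly on the range of $\tilde Z^{n+1/2}$ and $\tilde\phi^{n+1/2}$, so each difference is dominated linearly by $\tilde e_\phi$ or $e_r$, yielding the $\|e_\phi^n\|_{l^2,M}^2$ and $(e_r^n)^2$ terms on the right-hand side.

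The main obstacle is precisely this nonlinear SAV term: without \eqref{e_hypothesesA} the quantity $F'(\tilde Z)-F'(\tilde\phi)$ is only cubically bounded in $\tilde e_\phi$ and cannot be controlled linearly; the same difficulty haunts the $\sqrt{E_1^h(\tilde Z)+\delta}$ denominator, since $|\sqrt{a}-\sqrt{b}|\le |a-b|/(2\sqrt{\delta})$ requires controlling $E_1^h(\tilde Z)-E_1(\tilde\phi)$ linearly in $\tilde e_\phi$, which again uses uniform boundedness of $\tilde Z$ and $\tilde\phi$. Once all residuals are bounded using \eqref{e_hypotheses} and Young's inequality with a small parameter tuned to leave $\tfrac{M}{4}\|\textbf{D}e_\mu^{n+1/2}\|_{l^2}^2$ and $\tfrac{M}{2}\|e_\mu^{n+1/2}\|_{l^2,M}^2$ on the left (the latter via a discrete Poincaré-type bound combined with the $\mu$-error equation), summing from $n=0$ to $m$ and collecting truncation errors of order $O(\Delta t^4+h^4+k^4)$ yields \eqref{e_error_estimate13**}. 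The bound \eqref{e_hypotheses} will be removed later by an induction in $n$ combined with an inverse inequality, but at this stage it is taken as hypothesis.
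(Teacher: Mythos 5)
Your overall architecture matches the paper's: error equations with $O(\Delta t^2+h^2+k^2)$ truncation residuals, the splitting $e_{\textbf{u}}=\widehat{e}_{\textbf{u}}+\widetilde{e}_{\textbf{u}}$ through the auxiliary MAC Stokes problem, the same three test functions as in the stability proof, the extra test of the $\phi$-error equation against $e_\phi^{n+1/2}$ to recover $\|e_\phi^{m+1}\|_{l^2,M}^2$, and the use of \eqref{e_hypotheses} to linearize the $F'$ and $\sqrt{E_1^h+\delta}$ differences. However, there is one genuine gap at the heart of the argument: you never explain how to handle the pairing of the nonlinear SAV residual with the test function $d_te_{\phi}^{n+1}$. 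After the exact cancellation with the $r$-equation, what survives is
\begin{equation*}
\lambda r^{n+1/2}\Bigl(\tfrac{F^{\prime}(\tilde{Z}^{n+1/2})}{\sqrt{E_1^h(\tilde{Z}^{n+1/2})+\delta}}-\tfrac{F^{\prime}(\tilde{\phi}^{n+1/2})}{\sqrt{E_1^h(\tilde{\phi}^{n+1/2})+\delta}},\, d_te_{\phi}^{n+1}\Bigr)_{l^2,M}
\end{equation*}
(plus the analogous consistency piece). Saying that the first factor is ``dominated linearly by $\tilde e_\phi$'' is not enough: a direct Cauchy--Schwarz then leaves $\|d_te_{\phi}^{n+1}\|_{l^2,M}^2$ on the right-hand side, and no such quantity appears on the left of \eqref{e_error_estimate13**} or is otherwise controlled, so the estimate cannot close this way. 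The dissipation you retain is $\|\textbf{D}e_{\mu}^{n+1/2}\|_{l^2}^2$, not a bound on the discrete time derivative of $e_\phi$.

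The missing idea is the back-substitution used in the paper's \eqref{e_error_estimate721}--\eqref{e_error_estimate7213}: replace $d_te_{\phi}^{n+1}$ by the right-hand side of the $\phi$-error equation \eqref{e_error_estimate3}, i.e.\ by $M[d_x(D_xe_{\mu}+\delta_x(\mu))+d_y(D_ye_{\mu}+\delta_y(\mu))]$ minus the convection mismatch plus truncation terms, and then perform a discrete integration by parts (Lemma \ref{lemma:U-P-Relation}) to move one difference onto the $F'$-quotient. This converts the problematic pairing into terms of the form $(\,\cdot\,,D_xe_{\mu}^{n+1/2})$, which Young's inequality absorbs into $\frac{M}{6}\|\textbf{D}e_{\mu}^{n+1/2}\|_{l^2}^2$ at the cost of $\|e_\phi^{n}\|_{l^2,M}^2$, $\|\textbf{D}e_\phi^{n}\|_{l^2}^2$ and $O(h^4+k^4)$ contributions --- exactly the right-hand side of \eqref{e_error_estimate13**}. (A related but smaller omission: the consistency terms $\lambda(\delta_x(\phi)^{n+1/2},d_tD_xe_{\phi}^{n+1})$ also involve $d_t$ of the error and are handled in the paper by summation by parts in time, \eqref{e_summation-by-parts formulae in time}, rather than pointwise.) Without these two devices your outline does not yield a closed Gronwall-type inequality.
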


\begin{proof}
Denote
\begin{equation*}
\aligned
&\delta_x(\phi)=D_x\phi-\frac{\partial \phi}{\partial x},~\delta_y(\phi)=D_y\phi-\frac{\partial \phi}{\partial y},\\
&\delta_x(\mu)=D_x\mu-\frac{\partial \mu}{\partial x},~\delta_y(\mu)=D_y\mu-\frac{\partial \mu}{\partial y}.
\endaligned
\end{equation*}    
Subtracting (\ref{e_model_rA}) from (\ref{e_model_r_full_discrete1}), we obtain
\begin{equation}\label{e_error_estimate3}
\aligned
~&[d_te_{\phi}]_{i+1/2,j+1/2}^{n+1}=M[d_x(D_xe_{\mu}+\delta_x(\mu))+d_y(D_ye_{\mu}+\delta_y(\mu))]_{i+1/2,j+1/2}^{n+1/2}\\
&-\mathcal{P}_h^y\mathcal{P}_h^x[U_1D_x\tilde{Z}+U_2D_y\tilde{Z}]^{n+1/2}_{i+1/2,j+1/2}+
\textbf{u}^{n+1/2}_{i+1/2,j+1/2}\cdot\nabla\phi^{n+1/2}_{i+1/2,j+1/2}\\
&+T_{1,i+1/2,j+1/2}^{n+1/2}+T_{2,i+1/2,j+1/2}^{n+1/2},\\
\endaligned
\end{equation}
where 
\begin{equation}\label{e_error_estimate31}
\aligned
T_{1,i+1/2,j+1/2}^{n+1/2}=&\frac{\partial \phi}{\partial t}\big|_{i+1/2,j+1/2}^{n+1/2}-[d_t\phi]_{i+1/2,j+1/2}^{n+1}\\
\leq&C\|\phi\|_{W^{3}_{\infty}(J;L^{\infty}(\Omega))}\Delta t^2,
\endaligned
\end{equation}

\begin{equation}\label{e_error_estimate32}
\aligned
T_{2,i+1/2,j+1/2}^{n+1/2}&=M[d_x\frac{\partial \mu}{\partial x}+d_y\frac{\partial \mu}{\partial y}]_{i+1/2,j+1/2}^{n+1/2}-M\Delta \mu^{n+1/2}_{i+1/2,j+1/2}\\
&\leq CM(h^2+k^2)\|\mu\|_{L^{\infty}(J;W^{4}_{\infty}(\Omega))}.
\endaligned
\end{equation}
Subtracting (\ref{e_model_rB}) from (\ref{e_model_r_full_discrete2}) leads to
\begin{equation}\label{e_error_estimate4}
\aligned
e_{\mu,i+1/2,j+1/2}^{n+1/2}=&-\lambda[d_x(D_xe_{\phi}+\delta_x(\phi))+d_y(D_ye_{\phi}+\delta_y(\phi))]_{i+1/2,j+1/2}^{n+1/2}\\
&+\lambda\frac{R^{n+1/2}}{\sqrt{E_1^h(\tilde{Z}^{n+1/2})+\delta}}F^{\prime}(\tilde{Z}_{i+1/2,j+1/2}^{n+1/2})\\
&-\lambda\frac{r^{n+1/2}}{\sqrt{E_1(\phi^{n+1/2})+\delta}}F^{\prime}(\phi_{i+1/2,j+1/2}^{n+1/2})\\&+\lambda T_{3,i+1/2,j+1/2}^{n+1/2},
\endaligned
\end{equation}
where 
\begin{equation}\label{e_error_estimate41}
\aligned
T_{3,i+1/2,j+1/2}^{n+1/2}&=\Delta \phi^{n+1/2}_{i+1/2,j+1/2}-[d_x\frac{\partial \phi}{\partial x}+d_y\frac{\partial \phi}{\partial y}]_{i+1/2,j+1/2}^{n+1/2}\\
&\leq C(h^2+k^2)\|\phi\|_{L^{\infty}(J;W^{4}_{\infty}(\Omega))}.
\endaligned
\end{equation}
Subtracting equation (\ref{e_model_rC}) from equation (\ref{e_model_r_full_discrete3}) gives that
\begin{equation}\label{e_error_estimate5}
\aligned
d_te_r^{n+1}=&\frac{1}{2\sqrt{E_1^h(\tilde{Z}^{n+1/2})+\delta}}(F^{\prime}(\tilde{Z}^{n+1/2}),
d_tZ^{n+1})_{l^2,M}\\
&-\frac{1}{2\sqrt{E_1(\phi^{n+1/2})+\delta}}\int_{\Omega}F^{\prime}(\phi^{n+1/2})\phi^{n+1/2}_t d\textbf{x}+T_{4}^{n+1/2},
\endaligned
\end{equation}
where 
\begin{equation}\label{e_error_estimate51}
\aligned
T_4^{n+1/2}=r_t^{n+1/2}-d_tr^{n+1}\leq
C\|r\|_{W^{3}_{\infty}(J)}\Delta t^2.
\endaligned
\end{equation}
Multiplying equation (\ref{e_error_estimate3}) by $e_{\mu,i+1/2,j+1/2}^{n+1/2}hk$, and making summation on $i,j$ for $0\leq i\leq N_x-1,\ 0\leq j\leq N_y-1$, we have 
\begin{equation}\label{e_error_estimate6}
\aligned
&(d_te_{\phi}^{n+1},e_{\mu}^{n+1/2})_{l^2,M}\\
=&M\left(d_x(D_xe_{\mu}+\delta_x(\mu))^{n+1/2}+d_y(D_ye_{\mu}+\delta_y(\mu))^{n+1/2},e_{\mu}^{n+1/2}\right)_{l^2,M}\\
&-(\mathcal{P}_h^y\mathcal{P}_h^x[U_1D_x\tilde{Z}+U_2D_y\tilde{Z}]^{n+1/2}-
\textbf{u}^{n+1/2}\cdot\nabla\phi^{n+1/2},e_{\mu}^{n+1/2})_{l^2,M}\\
&+(T_1^{n+1/2}, e_{\mu}^{n+1/2})_{l^2,M}+(T_2^{n+1/2}, e_{\mu}^{n+1/2})_{l^2,M}.
\endaligned
\end{equation} 
Recalling Lemma \ref{lemma:U-P-Relation}, the first term on the right hand side of (\ref{e_error_estimate6}) can be estimated as follows:
\begin{equation}\label{e_error_estimate61}
\aligned
&M\left(d_x(D_xe_{\mu}+\delta_x(\mu))^{n+1/2}+d_y(D_ye_{\mu}+\delta_y(\mu))^{n+1/2},e_{\mu}^{n+1/2}\right)_{l^2,M}\\
=&-M\left((D_xe_{\mu}+\delta_x(\mu))^{n+1/2},D_xe_{\mu}^{n+1/2}\right)_{l^2,T,M}\\
&-M\left((D_ye_{\mu}+\delta_y(\mu))^{n+1/2},D_ye_{\mu}^{n+1/2}\right)_{l^2,M,T}\\
=&-M\|\textbf{D}e_{\mu}^{n+1/2}\|^2_{l^2}-M(\delta_x(\mu)^{n+1/2},D_xe_{\mu}^{n+1/2})_{l^2,T,M}\\
&-M(\delta_y(\mu)^{n+1/2},D_ye_{\mu}^{n+1/2})_{l^2,M,T}.
\endaligned
\end{equation} 
With the aid of Cauchy-Schwarz inequality, the last two terms on the right hand side of (\ref{e_error_estimate61}) can be transformed into:
\begin{equation}\label{e_error_estimate611}
\aligned
&-M(\delta_x(\mu)^{n+1/2},D_xe_{\mu}^{n+1/2})_{l^2,M,T}-M(\delta_y(\mu)^{n+1/2},D_ye_{\mu}^{n+1/2})_{l^2,T,M}\\
\leq&\frac{M}{6}\|\textbf{D}e_{\mu}^{n+1/2}\|_{l^2}^2+C\|\mu\|_{L^{\infty}(J;W^{3}_{\infty}(\Omega))}^2(h^4+k^4).
\endaligned
\end{equation} 
The second term on the right hand side of (\ref{e_error_estimate6}) can be transformed into
\begin{equation}\label{e_error_estimate62}
\aligned
&-(\mathcal{P}_h^y\mathcal{P}_h^x[U_1D_x\tilde{Z}+U_2D_y\tilde{Z}]^{n+1/2}-
\textbf{u}^{n+1/2}\cdot\nabla\phi^{n+1/2},e_{\mu}^{n+1/2})_{l^2,M}\\
=&-(\mathcal{P}_h^y\mathcal{P}_h^x[U_1D_x\tilde{Z}+U_2D_y\tilde{Z}]^{n+1/2}-\mathcal{P}_h^y\mathcal{P}_h^x[\widehat{U}_1D_x\tilde{Z}+\widehat{U}_2D_y\tilde{Z}]^{n+1/2},e_{\mu}^{n+1/2})_{l^2,M}\\
&-(\mathcal{P}_h^y\mathcal{P}_h^x[\widehat{U}_1D_x\tilde{Z}+\widehat{U}_2D_y\tilde{Z}]^{n+1/2}-
\mathcal{P}_h^y\mathcal{P}_h^x[u_1D_x\tilde{Z}+u_2D_y\tilde{Z}]^{n+1/2},e_{\mu}^{n+1/2})_{l^2,M}\\
&-(\mathcal{P}_h^y\mathcal{P}_h^x[u_1D_x\tilde{Z}+u_2D_y\tilde{Z}]^{n+1/2}-\textbf{u}^{n+1/2}\cdot\nabla\phi^{n+1/2},e_{\mu}^{n+1/2})_{l^2,M}.
\endaligned
\end{equation} 
Then taking notice of the definition of interpolations $\mathcal{P}_h^x$ and $\mathcal{P}_h^y$ , the first term on the right hand side of (\ref{e_error_estimate62}) can be bounded by
\begin{equation}\label{e_error_estimate621}
\aligned
&-(\mathcal{P}_h^y\mathcal{P}_h^x[U_1D_x\tilde{Z}+U_2D_y\tilde{Z}]^{n+1/2}-\mathcal{P}_h^y\mathcal{P}_h^x[\widehat{U}_1D_x\tilde{Z}+\widehat{U}_2D_y\tilde{Z}]^{n+1/2},e_{\mu}^{n+1/2})_{l^2,M}\\
\leq&C\|\textbf{D}\tilde{Z}\|_{\infty}^2\|\widehat{e}_{\textbf{u}}^{n+1/2}\|_{l^2}^2+C\|e_{\mu}^{n+1/2}\|_{l^2,M}^2.
\endaligned
\end{equation} 
Similarly noting Lemma \ref{le_auxiliary}, the second term on the right hand side of (\ref{e_error_estimate62}) can be estimated by
\begin{equation}\label{e_error_estimate622}
\aligned
&-(\mathcal{P}_h^y\mathcal{P}_h^x[\widehat{U}_1D_x\tilde{Z}+\widehat{U}_2D_y\tilde{Z}]^{n+1/2}-
\mathcal{P}_h^y\mathcal{P}_h^x[u_1D_x\tilde{Z}+u_2D_y\tilde{Z}]^{n+1/2},e_{\mu}^{n+1/2})_{l^2,M}\\
\leq&C\|\textbf{D}\tilde{Z}\|_{\infty}^2\|\widetilde{e}_{\textbf{u}}^{n+1/2}\|_{l^2}^2+C\|e_{\mu}^{n+1/2}\|_{l^2,M}^2\\
\leq&C\|e_{\mu}^{n+1/2}\|_{l^2,M}^2+C(\Delta t^4+h^4+k^4).
\endaligned
\end{equation}
Supposing that $\phi\in W^{2,\infty}(J;L^{\infty}(\Omega))$, the last term on the right hand side of (\ref{e_error_estimate62}) can be estimated by
\begin{equation}\label{e_error_estimate623}
\aligned
&-(\mathcal{P}_h^y\mathcal{P}_h^x[u_1D_x\tilde{Z}+u_2D_y\tilde{Z}]^{n+1/2}-\textbf{u}^{n+1/2}\cdot\nabla\phi^{n+1/2},e_{\mu}^{n+1/2})_{l^2,M}\\
\leq& C\|e_{\mu}^{n+1/2}\|_{l^2,M}^2+C\|\textbf{D}e_{\phi}^n\|_{l^2,M}^2+C\|\textbf{D}e_{\phi}^{n-1}\|_{l^2,M}^2\\
&+C\|\phi\|^2_{W^{2}_{\infty}(J;L^{\infty}(\Omega))}\Delta t^4.
\endaligned
\end{equation} 
Multiplying equation (\ref{e_error_estimate4}) by $d_te_{\phi,i+1/2,j+1/2}^{n+1}hk$, and making summation on $i,j$ for $0\leq i\leq N_x-1,~0\leq j\leq N_y-1$, we have 
\begin{equation}\label{e_error_estimate7}
\aligned
&(e_{\mu}^{n+1/2},d_te_{\phi}^{n+1})_{l^2,M}\\
=&-\lambda(d_x(D_xe_{\phi}+\delta_x(\phi))^{n+1/2}+d_y(D_ye_{\phi}+\delta_y(\phi))^{n+1/2}, d_te_{\phi}^{n+1})_{l^2,M}\\
&+\lambda(\frac{R^{n+1/2}}{\sqrt{E_1^h(\tilde{Z}^{n+1/2})+\delta}}F^{\prime}(\tilde{Z}^{n+1/2})
-\frac{r^{n+1/2}}{\sqrt{E_1(\phi^{n+1/2})+\delta}}F^{\prime}(\phi^{n+1/2}), d_te_{\phi}^{n+1})_{l^2,M}\\
&+\lambda(T_3^{n+1/2},d_te_{\phi}^{n+1})_{l^2,M}.
\endaligned
\end{equation}
Similar to the estimate of equation (\ref{e_Stability2_1}), the 
first term on the right hand side of equation (\ref{e_error_estimate7}) can be transformed into the following:
\begin{equation}\label{e_error_estimate71}
\aligned
&-\lambda(d_x(D_xe_{\phi}+\delta_x(\phi))^{n+1/2}+d_y(D_ye_{\phi}+\delta_y(\phi))^{n+1/2}, d_te_{\phi}^{n+1})_{l^2,M}\\
=&\lambda(D_xe_{\phi}^{n+1/2},d_tD_xe_{\phi}^{n+1})_{l^2,T,M}+\lambda(D_ye_{\phi}^{n+1/2},d_tD_ye_{\phi}^{n+1})_{l^2,M,T}\\
&+\lambda(\delta_x(\phi)^{n+1/2},d_tD_xe_{\phi}^{n+1/2})_{l^2,T,M}
+\lambda(\delta_y(\phi)^{n+1/2},d_tD_ye_{\phi}^{n+1/2})_{l^2,M,T}\\
=&\lambda\frac{\|\textbf{D}e_{\phi}^{n+1}\|^2_{l^2}-\|\textbf{D}e_{\phi}^n\|_{l^2}^2}{2\Delta t}
+\lambda(\delta_x(\phi)^{n+1/2},d_tD_xe_{\phi}^{n+1/2})_{l^2,T,M}\\
&+\lambda(\delta_y(\phi)^{n+1/2},d_tD_ye_{\phi}^{n+1/2})_{l^2,M,T}.
\endaligned
\end{equation}
The second term on the right hand side of equation (\ref{e_error_estimate7}) can be
rewritten as follows:
\begin{equation}\label{e_error_estimate72}
\aligned
&\lambda(\frac{R^{n+1/2}}{\sqrt{E_1^h(\tilde{Z}^{n+1/2})+\delta}}F^{\prime}(\tilde{Z}^{n+1/2})
-\frac{r^{n+1/2}}{\sqrt{E_1(\phi^{n+1/2})+\delta}}F^{\prime}(\phi^{n+1/2}), d_te_{\phi}^{n+1})_{l^2,M}\\
=&\lambda r^{n+1/2}(\frac{F^{\prime}(\tilde{Z}^{n+1/2})}{\sqrt{E_1^h(\tilde{Z}^{n+1/2})+\delta}}-\frac{F^{\prime}(\tilde{\phi}^{n+1/2})}{\sqrt{E_1^h(\tilde{\phi}^{n+1/2})+\delta}}, d_te_{\phi}^{n+1})_{l^2,M}\\
&+\lambda r^{n+1/2}(\frac{F^{\prime}(\tilde{\phi}^{n+1/2})}{\sqrt{E_1^h(\tilde{\phi}^{n+1/2})+\delta}}-\frac{F^{\prime}(\phi^{n+1/2})}{\sqrt{E_1(\phi^{n+1/2})+\delta}}, d_te_{\phi}^{n+1})_{l^2,M}\\
&+\lambda e_{r}^{n+1/2}(\frac{F^{\prime}(\tilde{Z}^{n+1/2})}{\sqrt{E_1^h(\tilde{Z}^{n+1/2})+\delta}},
d_te_{\phi}^{n+1})_{l^2,M}.
\endaligned
\end{equation}
Taking notice of (\ref{e_error_estimate3}), the first term on the right hand side of (\ref{e_error_estimate72}) can be transformed into the following:
\begin{equation}\label{e_error_estimate721}
\aligned
&\lambda r^{n+1/2}(\frac{F^{\prime}(\tilde{Z}^{n+1/2})}{\sqrt{E_1^h(\tilde{Z}^{n+1/2})+\delta}}-\frac{F^{\prime}(\tilde{\phi}^{n+1/2})}{\sqrt{E_1^h(\tilde{\phi}^{n+1/2})+\delta}}, d_te_{\phi}^{n+1})_{l^2,M}\\
=&M\lambda r^{n+1/2}(\frac{F^{\prime}(\tilde{Z}^{n+1/2})}{\sqrt{E_1^h(\tilde{Z}^{n+1/2})+\delta}}-\frac{F^{\prime}(\tilde{\phi}^{n+1/2})}{\sqrt{E_1^h(\tilde{\phi}^{n+1/2})+\delta}}, d_x(D_xe_{\mu}+\delta_x(\mu))^{n+1/2})_{l^2,M}\\
&+M\lambda r^{n+1/2}(\frac{F^{\prime}(\tilde{Z}^{n+1/2})}{\sqrt{E_1^h(\tilde{Z}^{n+1/2})+\delta}}-\frac{F^{\prime}(\tilde{\phi}^{n+1/2})}{\sqrt{E_1^h(\tilde{\phi}^{n+1/2})+\delta}}, d_y(D_ye_{\mu}+\delta_y(\mu))^{n+1/2})_{l^2,M}\\
&-\lambda r^{n+1/2}(\frac{F^{\prime}(\tilde{Z}^{n+1/2})}{\sqrt{E_1^h(\tilde{Z}^{n+1/2})+\delta}}-\frac{F^{\prime}(\tilde{\phi}^{n+1/2})}{\sqrt{E_1^h(\tilde{\phi}^{n+1/2})+\delta}}, \mathcal{P}_h[U_1D_x\tilde{Z}+U_2D_y\tilde{Z}]^{n+1/2}\\
&-\textbf{u}^{n+1/2}_{i+1/2,j+1/2}\cdot\nabla\phi^{n+1/2})_{l^2,M}\\
&+\lambda r^{n+1/2}(\frac{F^{\prime}(\tilde{Z}^{n+1/2})}{\sqrt{E_1^h(\tilde{Z}^{n+1/2})+\delta}}-\frac{F^{\prime}(\tilde{\phi}^{n+1/2})}{\sqrt{E_1^h(\tilde{\phi}^{n+1/2})+\delta}}, T_1^{n+1/2}+T_2^{n+1/2})_{l^2,M}.
\endaligned
\end{equation}
Similar to the estimates in \cite{li2018energy}, and using the Cauchy-Schwartz inequality, we can deduce that 
\begin{equation}\label{e_error_estimate7211}
\aligned
&M\lambda r^{n+1/2}(\frac{F^{\prime}(\tilde{Z}^{n+1/2})}{\sqrt{E_1^h(\tilde{Z}^{n+1/2})+\delta}}-\frac{F^{\prime}(\tilde{\phi}^{n+1/2})}{\sqrt{E_1^h(\tilde{\phi}^{n+1/2})+\delta}}, d_x(D_xe_{\mu}+\delta_x(\mu))^{n+1/2})_{l^2,M}\\
=&-M\lambda r^{n+1/2}(\frac{D_xF^{\prime}(\tilde{Z}^{n+1/2})}{\sqrt{E_1^h(\tilde{Z}^{n+1/2})+\delta}}-\frac{D_xF^{\prime}(\tilde{\phi}^{n+1/2})}{\sqrt{E_1^h(\tilde{\phi}^{n+1/2})+\delta}}, (D_xe_{\mu}+\delta_x(\mu))^{n+1/2})_{l^2,M}\\
\leq& \frac{M}{6}\|D_xe_{\mu}^{n+1/2}\|_{l^2,T,M}^2+C\|r\|^2_{L^{\infty}(J)}(\|e_{\phi}^{n}\|_m^2+\|e_{\phi}^{n-1}\|_{l^2,M}^2)\\
&+C\|r\|^2_{L^{\infty}(J)}(\|D_xe_{\phi}^{n}\|_{l^2,T,M}^2+\|D_xe_{\phi}^{n-1}\|_{l^2,T,M}^2)\\
&+C\|\mu\|_{L^{\infty}(J;W^{3}_{\infty}(\Omega))}^2(h^4+k^4).
\endaligned
\end{equation}
Similarly we can obtain
\begin{equation}\label{e_error_estimate7212}
\aligned
&M\lambda r^{n+1/2}(\frac{F^{\prime}(\tilde{Z}^{n+1/2})}{\sqrt{E_1^h(\tilde{Z}^{n+1/2})+\delta}}-\frac{F^{\prime}(\tilde{\phi}^{n+1/2})}{\sqrt{E_1^h(\tilde{\phi}^{n+1/2})+\delta}}, d_y(D_ye_{\mu}+\delta_y(\mu))^{n+1/2})_{l^2,M}\\
\leq& \frac{M}{6}\|D_ye_{\mu}^{n+1/2}\|_{l^2,M,T}^2+C\|r\|^2_{L^{\infty}(J)}(\|e_{\phi}^{n}\|_{l^2,M}^2+\|e_{\phi}^{n-1}\|_{l^2,M}^2)\\
&+C\|r\|^2_{L^{\infty}(J)}(\|D_ye_{\phi}^{n}\|_{l^2,M,T}^2+\|D_ye_{\phi}^{n-1}\|_{l^2,M,T}^2)\\
&+C\|\mu\|_{L^{\infty}(J;W^{3}_{\infty}(\Omega))}^2(h^4+k^4).
\endaligned
\end{equation}
Then equation (\ref{e_error_estimate721}) can be estimated by:
\begin{equation}\label{e_error_estimate7213}
\aligned
&\lambda r^{n+1/2}(\frac{F^{\prime}(\tilde{Z}^{n+1/2})}{\sqrt{E_1^h(\tilde{Z}^{n+1/2})+\delta}}-\frac{F^{\prime}(\tilde{\phi}^{n+1/2})}{\sqrt{E_1^h(\tilde{\phi}^{n+1/2})+\delta}}, d_te_{\phi}^{n+1})_{l^2,M}\\
\leq &\frac{M}{6}\|\textbf{D}e_{\mu}^{n+1/2}\|_{l^2}^2+C\|r\|_{L^{\infty}(J)}(\|e_{\phi}^{n}\|_{l^2,M}^2+\|e_{\phi}^{n-1}\|_{l^2,M^2})\\
&+C\|r\|_{L^{\infty}(J)}(\|\textbf{D}e_{\phi}^{n}\|_{l^2}^2+\|\textbf{D}e_{\phi}^{n-1}\|_{l^2}^2)
+C\|\textbf{D}\tilde{Z}\|_{\infty}^2\|\widehat{e}_{\textbf{u}}^{n+1/2}\|_{l^2}^2\\
&+C\|\mu\|_{L^{\infty}(J;W^{4}_{\infty}(\Omega))}^2(h^4+k^4)+C\|\phi\|_{W^{3}_{\infty}(J;L^{\infty}(\Omega))}^2\Delta t^4.
\endaligned
\end{equation}
Similar to the estimates of (\ref{e_error_estimate721}),
the second term on the right hand side of (\ref{e_error_estimate72}) can be controlled by:
\begin{equation}\label{e_error_estimate722}
\aligned
&\lambda r^{n+1/2}(\frac{F^{\prime}(\tilde{\phi}^{n+1/2})}{\sqrt{E_1^h(\tilde{\phi}^{n+1/2})+\delta}}-\frac{F^{\prime}(\phi^{n+1/2})}{\sqrt{E_1(\phi^{n+1/2})+\delta}}, d_te_{\phi}^{n+1})_{l^2,M}\\
\leq &\frac{M}{6}\|\textbf{D}e_{\mu}^{n+1/2}\|_{l^2}^2+C\|\textbf{D}e_{\phi}^n\|_{l^2,M}^2+C\|\textbf{D}e_{\phi}^{n-1}\|_{l^2,M}^2\\
&+C\|\textbf{D}\tilde{Z}\|_{\infty}^2\|\widehat{e}_{\textbf{u}}^{n+1/2}\|_{l^2}^2+C\|\phi\|_{W^{3}_{\infty}(J;W^{1}_{\infty}(\Omega))}^2\Delta t^4\\
&+C(\|\mu\|_{L^{\infty}(J;W^{4}_{\infty}(\Omega))}^2+\|\phi\|_{L^{\infty}(J;W^{2}_{\infty}(\Omega))}^2 )(h^4+k^4).
\endaligned
\end{equation}
Multiplying equation (\ref{e_error_estimate5}) by $\lambda (e_{r}^{n+1}+e_{r}^{n})$ leads to 
\begin{equation}\label{e_error_estimate8}
\aligned
\lambda\frac{(e_r^{n+1})^2-(e_r^{n})^2}{\Delta t}=&
\lambda\frac{e_r^{n+1/2}}{\sqrt{E_1^h(\tilde{Z}^{n+1/2})+\delta}}(F^{\prime}(\tilde{Z}^{n+1/2}),
d_tZ^{n+1})_{l^2,M}\\
&-\lambda\frac{e_r^{n+1/2}}{\sqrt{E_1(\phi^{n+1/2})+\delta}}\int_{\Omega}F^{\prime}(\phi^{n+1/2})\phi^{n+1/2}_t d\textbf{x}\\
&+\lambda T_{4}^{n+1/2}\cdot (e_{r}^{n+1}+e_{r}^{n}).
\endaligned
\end{equation}
Then similar to the estimates in \cite{li2018energy}, we have
 \begin{equation}\label{e_error_estimate81}
\aligned
&\lambda\frac{(e_r^{n+1})^2-(e_r^{n})^2}{\Delta t}\\
\leq &\lambda\frac{e_{r}^{n+1/2}}{\sqrt{E_1^h(\tilde{Z}^{n+1/2})+\delta}}(F^{\prime}(\tilde{Z}^{n+1/2}),
d_te_{\phi}^{n+1})_{l^2,M}+\lambda T_{4}^{n+1/2}\cdot (e_{r}^{n+1}+e_{r}^{n})\\
&+C(e_r^{n+1/2})^2+C\|\phi\|^2_{W^{1}_{\infty}(J;L^{\infty}(\Omega))}(\|e_{\phi}^{n}\|_{l^2,M}^2+\|e_{\phi}^{n-1}\|_{l^2,M}^2)\\
&+C\|\phi\|^2_{W^{1}_{\infty}(J;W^{2}_{\infty}(\Omega))}(h^4+k^4).
\endaligned
\end{equation}
Combining the above equations and using Cauchy-Schwarz inequality lead to
\begin{equation}\label{e_error_estimate9}
\aligned
&\lambda\frac{(e_r^{n+1})^2-(e_r^{n})^2}{\Delta t}+\lambda\frac{\|\textbf{D}e_{\phi}^{n+1}\|^2_{l^2}-\|\textbf{D}e_{\phi}^n\|_{l^2}^2}{2\Delta t}+M\|\textbf{D}e_{\mu}^{n+1/2}\|^2_{l^2}\\
\leq &\frac{M}{2}\|\textbf{D}e_{\mu}^{n+1/2}\|_{l^2}^2+C\|e_{\mu}^{n+1/2}\|_{l^2,M}^2+C\|r\|^2_{L^{\infty}(J)}(\|e_{\phi}^{n}\|_{l^2,M}^2+\|e_{\phi}^{n-1}\|_{l^2,M}^2)\\
&+C\|\textbf{D}\tilde{Z}\|_{\infty}^2\|\widehat{e}_{\textbf{u}}^{n+1/2}\|_{l^2}^2+C\|r\|^2_{L^{\infty}(J)}(\|\textbf{D}e_{\phi}^{n}\|_{l^2}^2+\|\textbf{D}e_{\phi}^{n-1}\|_{l^2}^2)
\\
&-\lambda(\delta_x(\phi)^{n+1/2},d_tD_xe_{\phi}^{n+1/2})_{l^2,T,M}
-\lambda(\delta_y(\phi)^{n+1/2},d_tD_ye_{\phi}^{n+1/2})_{l^2,M,T}\\
&+\lambda(T_3^{n+1/2},d_te_{\phi}^{n+1})_{l^2,M}+\lambda T_{4}^{n+1/2}\cdot (e_{r}^{n+1}+e_{r}^{n})\\
&+C(e_r^{n+1/2})^2+C\|\phi\|^2_{W^{1}_{\infty}(J;L^{\infty}(\Omega))}(\|e_{\phi}^{n}\|_{l^2,M}^2+\|e_{\phi}^{n-1}\|_{l^2,M}^2)\\
&+C(\|\phi\|^2_{W^{1}_{\infty}(J;W^{2}_{\infty}(\Omega))}+\|\mu\|_{L^{\infty}(J;W^{4}_{\infty}(\Omega))}^2 )(h^4+k^4)\\
&+C\|\phi\|_{W^{3}_{\infty}(J;W^{1}_{\infty}(\Omega))}^2\Delta t^4.
\endaligned
\end{equation}
Taking notice of that
\begin{equation}\label{e_summation-by-parts formulae in time}
\aligned
\sum_{n=0}^{k}\Delta t&(f^n,d_tg^{n+1})
=-\sum_{n=1}^{k}\Delta t(d_tf^n,g^n)\\
&+(f^k,g^{k+1})+(f^0,g^0).
\endaligned
\end{equation}
Using the above equation and multiplying equation (\ref{e_error_estimate9}) by $\Delta t$, summing over $n$ from $1$ to $m$ result in
\begin{equation}\label{e_error_estimate10}
\aligned
&\lambda(e_r^{m+1})^2+\frac{\lambda}{2}\|\textbf{D}e_{\phi}^{m+1}\|^2_{l^2}+\frac{M}{2}\sum_{n=0}^{m}\Delta t\|\textbf{D}e_{\mu}^{n+1/2}\|^2_{l^2}\\
\leq&C\sum_{n=0}^{m+1}\Delta t\|\textbf{D}e_{\phi}^n\|_{l^2}^2
+\frac{M}{2}\sum_{n=0}^{k+1}\Delta t\|e_{\mu}^{n+1/2}\|_{l^2,M}^2\\
&+C\sum_{n=0}^{m+1}\Delta t\|\widehat{e}_{\textbf{u}}^{n+1/2}\|_{l^2}^2
+C\sum_{n=0}^{m+1}\Delta t\|e_{\phi}^n\|_{l^2,M}^2\\
&+C\sum_{n=0}^{m+1}\Delta t(e_r^{n})^2+C\|\phi\|_{W^{3}_{\infty}(J;W^{1,\infty}(\Omega))}^2\Delta t^4\\
&+C(\|\phi\|^2_{W^{1}_{\infty}(J;W^{4}_{\infty}(\Omega))}+\|\mu\|_{L^{\infty}(J;W^{4}_{\infty}(\Omega))}^2 )(h^4+k^4).
\endaligned
\end{equation}

To proceed to the following the error estimate, we should consider the second term on the right hand side of (\ref{e_error_estimate10}). Multiplying (\ref{e_error_estimate3}) by $e_{\phi,i+1/2,j+1/2}^{n+1/2}hk$, and making summation on $i,j$ for $0\leq i\leq N_x-1,\ 0\leq j\leq N_y-1$, we have 
\begin{equation}\label{e_error_estimate11}
\aligned
&(d_te_{\phi}^{n+1},e_{\phi}^{n+1/2})_{l^2,M}\\
=&M\left(d_x(D_xe_{\mu}+\delta_x(\mu))^{n+1/2}+d_y(D_ye_{\mu}+\delta_y(\mu))^{n+1/2},e_{\phi}^{n+1/2}\right)_{l^2,M}\\
&-(\mathcal{P}_h[U_1D_x\tilde{Z}+U_2D_y\tilde{Z}]^{n+1/2}-
\textbf{u}^{n+1/2}\cdot\nabla\phi^{n+1/2},e_{\phi}^{n+1/2})_{l^2,M}\\
&+(T_1^{n+1/2}, e_{\phi}^{n+1/2})_{l^2,M}+(T_2^{n+1/2}, e_{\phi}^{n+1/2})_{l^2,M}.
\endaligned
\end{equation} 
The first term on the right hand side of (\ref{e_error_estimate11}) can be bounded by
\begin{equation}\label{e_error_estimate111}
\aligned
&M\left(d_x(D_xe_{\mu}+\delta_x(\mu))^{n+1/2}+d_y(D_ye_{\mu}+\delta_y(\mu))^{n+1/2},e_{\phi}^{n+1/2}\right)_{l^2,M}\\
=&-M\left((D_xe_{\mu}+\delta_x(\mu))^{n+1/2}, D_xe_{\phi}^{n+1/2}\right)_{l^2,T,M}\\
&-M\left((D_ye_{\mu}+\delta_y(\mu))^{n+1/2}, D_ye_{\phi}^{n+1/2}\right)_{l^2,M,T}\\
\leq& M\left(e_{\mu}^{n+1/2},d_x(D_xe_{\phi}+\delta_x(\phi))^{n+1/2}+d_y(D_ye_{\phi}+\delta_y(\phi))^{n+1/2} \right)_{l^2,M}\\
&+\frac{M}{4}\|\textbf{D}e_{\mu}^{n+1/2}\|_{l^2}^2+C\|\textbf{D}e_{\phi}^{n+1/2}\|_{l^2}^2\\
&+C(\|\mu\|_{L^{\infty}(J;W^{3}_{\infty}(\Omega))}^2+\|\phi\|_{L^{\infty}(J;W^{3}_{\infty}(\Omega))}^2)(h^4+k^4)\\
\leq&-\frac{M}{2}\|e_{\mu}^{n+1/2}\|_{l^2,M}^2+C(e_r^{n+1}+e_r^{n})^2+
C(\|e_{\phi}^n\|_{l^2,M}^2+\|e_{\phi}^{n-1}\|_{l^2,M}^2)\\
&+\frac{M}{4}\|\textbf{D}e_{\mu}^{n+1/2}\|_{l^2}^2+C\|\textbf{D}e_{\phi}^{n+1/2}\|_{l^2}^2+C\|\phi\|_{L^{\infty}(J;W^{4}_{\infty}(\Omega))}^2(h^4+k^4)\\
&+C(\|\mu\|_{L^{\infty}(J;W^{3}_{\infty}(\Omega))}^2+\|\phi\|_{L^{\infty}(J;W^{3}_{\infty}(\Omega))}^2)(h^4+k^4).
\endaligned
\end{equation}
The second term on the right hand side of (\ref{e_error_estimate11}) can be estimated by
\begin{equation}\label{e_error_estimate112}
\aligned
&-(\mathcal{P}_h[U_1D_x\tilde{Z}+U_2D_y\tilde{Z}]^{n+1/2}-
\textbf{u}^{n+1/2}\cdot\nabla\phi^{n+1/2},e_{\phi}^{n+1/2})_{l^2,M}\\
\leq&C\|\textbf{D}\tilde{Z}\|_{\infty}^2\|\widehat{e}_{\textbf{u}}^{n+1/2}\|_{l^2}^2+C\|\textbf{D}e_{\phi}^n\|_{l^2,M}^2+C\|\textbf{D}e_{\phi}^{n-1}\|_{l^2,M}^2\\
&+C\|e_{\phi}^{n+1/2}\|_{l^2,M}^2+C(\Delta t^4+h^4+k^4).
\endaligned
\end{equation} 
Combining (\ref{e_error_estimate11}) with (\ref{e_error_estimate111}) and (\ref{e_error_estimate112}), multiplying by $2\Delta t$, and summing over $n$ from $1$ to $m$ give that
\begin{equation}\label{e_error_estimate12}
\aligned
&\|e_{\phi}^{m+1}\|_{l^2,M}^2+M\sum\limits_{n=0}^{m}\Delta t\|e_{\mu}^{n+1/2}\|_{l^2,M}^2\\
\leq& C\sum\limits_{n=0}^{m}\Delta t(e_r^{n+1})^2+C\sum\limits_{n=0}^{m}\Delta t\|e_{\phi}^{n+1}\|_{l^2,M}^2+C\sum\limits_{n=0}^{m}\Delta t\|\widehat{e}_{\textbf{u}}^{n+1/2}\|_{l^2}^2\\
&+\frac{M}{4}\sum\limits_{n=0}^{k}\Delta t\|\textbf{D}e_{\mu}^{n+1/2}\|_{l^2}^2
+C\sum\limits_{n=0}^{k}\Delta t\|\textbf{D}e_{\phi}^{n+1/2}\|_{l^2}^2\\
&+C(\|\mu\|_{L^{\infty}(J;W^{4}_{\infty}(\Omega))}^2+\|\phi\|_{L^{\infty}(J;W^{4}_{\infty}(\Omega))}^2)(h^4+k^4)\\
&+C\|\phi\|_{W^{3}_{\infty}(J;L^{\infty}(\Omega))}^2\Delta t^4.
\endaligned
\end{equation} 
Combining (\ref{e_error_estimate10}) with the above equation leads to
\begin{equation}\label{e_error_estimate13}
\aligned
&\|e_{\phi}^{m+1}\|_{l^2,M}^2+\frac{M}{2}\sum\limits_{n=0}^{m}\Delta t\|e_{\mu}^{n+1/2}\|_{l^2,M}^2
+\lambda(e_r^{m+1})^2\\
&+\frac{\lambda}{2}\|\textbf{D}e_{\phi}^{m+1}\|^2_{l^2}+\frac{M}{4}\sum_{n=0}^{m}\Delta t\|\textbf{D}e_{\mu}^{n+1/2}\|^2_{l^2}\\
\leq&C\sum_{n=0}^{m+1}\Delta t\|\textbf{D}e_{\phi}^n\|_{l^2}^2+C\sum_{n=0}^{m}\Delta t\|\widehat{e}_{\textbf{u}}^{n+1/2}\|_{l^2}^2\\
&+C\sum_{n=0}^{m+1}\Delta t\|e_{\phi}^n\|_{l^2,M}^2
+C\sum_{n=0}^{m+1}\Delta t(e_r^{n})^2\\
&+C(\Delta t^4+h^4+k^4).
\endaligned
\end{equation}
\end{proof}
\medskip

\begin{lemma}\label{lem: error_estimates2}
Suppose that the hypotheses (\ref{e_hypotheses}) hold, and $\phi\in W^{3}_{\infty}(J;W^{4}_{\infty}(\Omega)),\mu\in L^{\infty}(J;W^{4}_{\infty}(\Omega))$, $\textbf{u}\in W^{3}_{\infty}(J;W^{4}_{\infty}(\Omega))^2$, $p\in W^{3}_{\infty}(J;W^{3}_{\infty}(\Omega))$, then for the case of Stokes equation, the approximate errors of discrete velocity and pressure satisfy
\begin{equation}\label{e_error_estimate22*****}
\aligned
&\|\widehat{e}_{\textbf{u}}^{m+1}\|_{l^2}^2
+\|\textbf{D}\widehat{e}_{\textbf{u}}^{m+1}\|^2
+\sum\limits_{n=0}^{m}\Delta t\|\widehat{e}_{p}^{n+1/2}\|_{l^2,M}^2\\
\leq &C\sum\limits_{n=0}^{m}\Delta t\|e_{\mu}^{n+1/2}\|_{l^2,M}^2+C\sum\limits_{n=0}^{m}\Delta t\|e_{\phi}^{n}\|_{l^2,M}^2\\
&+C(\Delta t^4+h^4+k^4),\quad   \ m\leq N,
\endaligned
\end{equation}
where the positive constant $C$ is independent of $h$, $k$ and $\Delta t$.
\end{lemma}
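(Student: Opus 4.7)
The plan is to derive a discrete error equation for $(\widehat{e}_{\textbf{u}},\widehat{e}_p)$ by subtracting the auxiliary MAC problem \eqref{e36}--\eqref{e38} (whose right-hand side is $\textbf{g}^{n+1/2}=\mu^{n+1/2}\nabla\phi^{n+1/2}-\partial_t\textbf{u}|^{n+1/2}$) from the fully discrete scheme \eqref{e_model_r_full_discrete4}--\eqref{e_model_r_full_discrete6} with $\gamma=0$. For the $x$-component this yields
\[
[d_t\widehat{e}_{U_1}]^{n+1}-\nu D_x(d_x\widehat{e}_{U_1})^{n+1/2}-\nu d_y(D_y\widehat{e}_{U_1})^{n+1/2}+[D_x\widehat{e}_p]^{n+1/2}=\mathcal{R}_1^{n+1/2},
\]
with an analogous equation in $y$, and the discrete divergence-free identity $d_x\widehat{e}_{U_1}^{n+1/2}+d_y\widehat{e}_{U_2}^{n+1/2}=0$ inherited from \eqref{e_model_r_full_discrete6}--\eqref{e38}. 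The forcing $\mathcal{R}^{n+1/2}$ collects three contributions: the phase-field residual $\mathcal{P}_h^xW^{n+1/2}[D_x\tilde Z]^{n+1/2}-\mu^{n+1/2}\partial_x\phi^{n+1/2}$; the quantity $-[d_t\widetilde{e}_{U_1}]^{n+1}$ controlled in Lemma \ref{le_auxiliary} by \eqref{e39}; and a Crank--Nicolson truncation of size $O(\Delta t^2)$ coming from $d_tu_1^{n+1}-\partial_tu_1|^{n+1/2}$.

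The central algebraic ingredient is the rewriting
\[
\mathcal{P}_h^x W^{n+1/2}[D_x\tilde Z]^{n+1/2}-\mu^{n+1/2}\partial_x\phi^{n+1/2}=\mathcal{P}_h^x e_\mu^{n+1/2}[D_x\tilde Z]^{n+1/2}+\mathcal{P}_h^x\mu^{n+1/2}[D_x\tilde{e}_\phi]^{n+1/2}+\rho_1^{n+1/2},
\]
in which $\rho_1^{n+1/2}$ is a smooth-function truncation of order $\Delta t^2+h^2+k^2$. The first piece is bounded pointwise by $C^*|e_\mu|$ via hypothesis \eqref{e_hypothesesB}. The second piece still contains a discrete derivative of $e_\phi$, which cannot appear on the right-hand side of the target estimate; to eliminate it I use the discrete analog of the identity $\mu\nabla\tilde e_\phi=\nabla(\mu\tilde e_\phi)-\tilde e_\phi\nabla\mu$. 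When tested against a discretely divergence-free vector (either $\widehat{e}_{\textbf{u}}^{n+1/2}$ or $[d_t\widehat{e}_{\textbf{u}}]^{n+1}$), the $\nabla(\mu\tilde e_\phi)$ contribution vanishes upon applying the discrete integration-by-parts formulae of Lemma \ref{lemma:U-P-Relation}, leaving only $\tilde e_\phi\nabla\mu$ in which $\nabla\mu$ is bounded by smoothness and $\tilde e_\phi$ contributes $\|e_\phi^n\|_{l^2,M}^2+\|e_\phi^{n-1}\|_{l^2,M}^2$.

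With this rewriting I carry out two energy-type estimates and an inf--sup pressure bound. The first test multiplies the two components of the error equation by $\widehat{e}_{U_1}^{n+1/2}$ and $\widehat{e}_{U_2}^{n+1/2}$: the time derivative telescopes into $\tfrac{1}{2\Delta t}(\|\widehat{e}_{\textbf{u}}^{n+1}\|_{l^2}^2-\|\widehat{e}_{\textbf{u}}^n\|_{l^2}^2)$, the diffusion delivers $\nu\|\textbf{D}\widehat{e}_{\textbf{u}}^{n+1/2}\|_{l^2}^2$, the pressure vanishes by divergence-freeness, and the remaining terms are absorbed using the above identity, Lemma \ref{le_auxiliary}, and Cauchy--Schwarz. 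Summing and applying discrete Gr\"onwall yields $\|\widehat{e}_{\textbf{u}}^{m+1}\|_{l^2}^2+\nu\sum\Delta t\|\textbf{D}\widehat{e}_{\textbf{u}}^{n+1/2}\|_{l^2}^2$ in the desired form. The second test multiplies by $[d_t\widehat{e}_{\textbf{u}}]^{n+1}$: the diffusion now telescopes into $\tfrac{\nu}{2\Delta t}(\|\textbf{D}\widehat{e}_{\textbf{u}}^{n+1}\|^2-\|\textbf{D}\widehat{e}_{\textbf{u}}^n\|^2)$ and thereby produces the endpoint $\|\textbf{D}\widehat{e}_{\textbf{u}}^{m+1}\|^2$ together with a positive $\sum\Delta t\|[d_t\widehat{e}_{\textbf{u}}]^{n+1}\|_{l^2}^2$, while the pressure term again vanishes because $[d_t\widehat{e}_{\textbf{u}}]$ is divergence-free. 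Finally, for the pressure I invoke Lemma \ref{le_LBB} with $q_h=\widehat{e}_p^{n+1/2}\in W_h$: from the error equation, $b_h(\textbf{v}_h,\widehat{e}_p^{n+1/2})$ is controlled by $C(\|[d_t\widehat{e}_{\textbf{u}}]^{n+1}\|_{l^2}+\|\textbf{D}\widehat{e}_{\textbf{u}}^{n+1/2}\|_{l^2}+\|\mathcal{R}^{n+1/2}\|_{l^2})\,\|\textbf{D}\textbf{v}_h\|$; dividing by $\|\textbf{D}\textbf{v}_h\|$, taking the supremum, squaring, and summing in $\Delta t$ then converts the second velocity estimate into the desired bound on $\sum\Delta t\|\widehat{e}_p^{n+1/2}\|_{l^2,M}^2$.

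The hard part will be the careful bookkeeping of the two interpolation operators $\mathcal{P}_h^x,\mathcal{P}_h^y$ and the four difference operators $D_x,D_y,d_x,d_y$ across the cell-centered, horizontal-edge, and vertical-edge grids when making the discrete $\mu\nabla\tilde e_\phi=\nabla(\mu\tilde e_\phi)-\tilde e_\phi\nabla\mu$ cancellation work on the MAC mesh: only a pairing consistent with Lemma \ref{lemma:U-P-Relation} can use the discrete divergence-free property of $\widehat{e}_{\textbf{u}}$ (resp.\ $[d_t\widehat{e}_{\textbf{u}}]$) to eliminate the unwanted gradient term, preventing a spurious $\|\textbf{D}e_\phi\|^2$ from entering the bound. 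A secondary difficulty is that the endpoint quantity $\|\textbf{D}\widehat{e}_{\textbf{u}}^{m+1}\|^2$ is not accessible from the standard energy test and requires the second $(d_t\widehat{e}_{\textbf{u}})$-test, whose closure in turn relies on the second-order bounds \eqref{e127} and \eqref{e39} for $\widetilde{e}_{\textbf{u}}$ and $d_t\widetilde{e}_{\textbf{u}}$ provided by Lemma \ref{le_auxiliary}.
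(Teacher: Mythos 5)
Your proposal is correct and follows essentially the same route as the paper: subtract the auxiliary MAC Stokes approximation \eqref{e36}--\eqref{e38} to form the error equation for $(\widehat{e}_{\textbf{u}},\widehat{e}_p)$, split the forcing into the $\mathcal{P}_h e_\mu[\textbf{D}\tilde Z]$ piece (controlled via \eqref{e_hypothesesB}), the $d_t\widetilde{e}_{\textbf{u}}$ piece (controlled via Lemma \ref{le_auxiliary}), and smooth truncation errors, then test with $[d_t\widehat{e}_{\textbf{u}}]^{n+1}$ (whose discrete divergence vanishes, killing the pressure) to obtain $\|\textbf{D}\widehat{e}_{\textbf{u}}^{m+1}\|^2$, and finally invoke the discrete inf--sup condition of Lemma \ref{le_LBB} for $\sum_n\Delta t\|\widehat{e}_p^{n+1/2}\|_{l^2,M}^2$. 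The only deviations are minor: you add a second, technically redundant energy test with $\widehat{e}_{\textbf{u}}^{n+1/2}$ where the paper instead recovers $\|\widehat{e}_{\textbf{u}}^{m+1}\|_{l^2}$ from $\|\textbf{D}\widehat{e}_{\textbf{u}}^{m+1}\|$ by the discrete Poincar\'e inequality, and you make explicit the two-component, divergence-free summation-by-parts cancellation of the $\mu\,\textbf{D}\tilde{e}_\phi$ term that the paper's componentwise bound \eqref{e_error_estimate171} uses only implicitly --- a point on which your write-up is actually more careful than the paper's.
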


\begin{proof}
Subtracting (\ref{e36}) from (\ref{e_model_r_full_discrete4}) for the case of Stokes equation with $\gamma=0$, we can obtain
\begin{equation}\label{e_error_estimate14}
\aligned
&d_{t}\widehat{e}_{\textbf{u},1,i,j+1/2}^{n+1}-\nu \frac{d_x\widehat{e}_{\textbf{u},1,i+1/2,j+1/2}^{n+1/2}-d_x\widehat{e}_{\textbf{u},1,i-1/2,j+1/2}^{n+1/2}}{h_i}\\
&-\nu \frac{D_y\widehat{e}_{\textbf{u},1,i,j+1}^{n+1/2}-D_y\widehat{e}_{\textbf{u},1,i,j}^{n+1/2}}{k_{j+1/2}}
+D_x\widehat{e}_{p,i,j+1/2}^{n+1/2}\\
=&\mathcal{P}_h W_{i,j+1/2}^{n+1/2}[D_x\tilde{Z}]_{i,j+1/2}^{n+1/2}-\mu_{i,j+1/2}^{n+1/2}\frac{\partial \phi}{\partial x}_{i,j+1/2}^{n+1/2}\\
&+\frac{\partial u_1}{\partial t}|_{i,j+1/2}^{n+1/2}-[d_t\widehat{U}_1]_{i,j+1/2}^{n+1}.
\endaligned
\end{equation}
For a discrete function $\{v^{n}_{1,i,j+1/2}\}$ such that $v^{n}_{1,i,j+1/2}|_{\partial \Omega}=0$,  multiplying (\ref{e_error_estimate14}) by times $v^{n}_{1,i,j+1/2}hk$
and make summation for $i,j$ with $i=1,\cdots,N_x-1,~j=0,\cdots,N_y-1$, and recalling Lemma \ref{lemma:U-P-Relation} lead to 
\begin{equation}\label{e_error_estimate15}
\aligned
&(d_{t}\widehat{e}_{\textbf{u},1}^{n+1},v^{n}_1)_{l^2,T,M}+\nu (d_x\widehat{e}_{\textbf{u},1}^{n+1/2},d_xv^{n}_1)_{l^2,M}\\
&+\nu (D_y\widehat{e}_{\textbf{u},1}^{n+1/2},D_yv^{n}_1)_{l^2,T_y}
-(\widehat{e}_{p}^{n+1/2},d_xv^{n}_1)_{l^2,M}\\
=&(\mathcal{P}_h W^{n+1/2}[D_x\tilde{Z}]^{n+1/2}-\mu^{n+1/2}\frac{\partial \phi^{n+1/2}}{\partial x},v^{n}_1)_{l^2,T,M}\\
&+(\frac{\partial u_1^{n+1/2}}{\partial t}-d_t\widehat{U}_1^{n+1},v^{n}_1)_{l^2,T,M}.
\endaligned
\end{equation}
Similarly in the $y$ direction, we have 
\begin{equation}\label{e_error_estimate16}
\aligned
&(d_{t}\widehat{e}_{\textbf{u},2}^{n+1},v^{n}_2)_{l^2,M,T}+\nu (d_y\widehat{e}_{\textbf{u},2}^{n+1/2},d_yv^{n}_2)_{l^2,M}\\
&+\nu (D_x\widehat{e}_{\textbf{u},2}^{n+1/2},D_xv^{n}_2)_{l^2,T_x}
-(\widehat{e}_{p}^{n+1/2},d_yv^{n}_2)_{l^2,M}\\
=&(\mathcal{P}_h W^{n+1/2}[D_y\tilde{Z}]^{n+1/2}-\mu^{n+1/2}\frac{\partial \phi^{n+1/2}}{\partial y},v^{n}_2)_{l^2,M,T}\\
&+(\frac{\partial u_2^{n+1/2}}{\partial t}-d_t\widehat{U}_2^{n+1},v^{n}_2)_{l^2,M,T}.
\endaligned
\end{equation}
Adding (\ref{e_error_estimate15}) and (\ref{e_error_estimate16}) results in
\begin{equation}\label{e_error_estimate17}
\aligned
&(d_{t}\widehat{e}_{\textbf{u},1}^{n+1},v^{n}_1)_{l^2,T,M}+(d_{t}\widehat{e}_{\textbf{u},2}^{n+1},v^{n}_2)_{l^2,M,T}+\nu (d_x\widehat{e}_{\textbf{u},1}^{n+1/2},d_xv^{n}_1)_{l^2,M}\\
&+\nu (D_y\widehat{e}_{\textbf{u},1}^{n+1/2},D_yv^{n}_1)_{l^2,T_y}
+\nu (d_y\widehat{e}_{\textbf{u},2}^{n+1/2},d_yv^{n}_2)_{l^2,M}\\
&+\nu (D_x\widehat{e}_{\textbf{u},2}^{n+1/2},D_xv^{n}_2)_{l^2,T_x}
-(\widehat{e}_{p}^{n+1/2},d_xv^{n}_1+d_yv^{n}_2)_{l^2,M}\\
=&(\mathcal{P}_h W^{n+1/2}[D_x\tilde{Z}]^{n+1/2}-\mu^{n+1/2}\frac{\partial \phi^{n+1/2}}{\partial x},v^{n}_1)_{l^2,T,M}\\
&+(\mathcal{P}_h W^{n+1/2}[D_y\tilde{Z}]^{n+1/2}-\mu^{n+1/2}\frac{\partial \phi^{n+1/2}}{\partial y},v^{n}_2)_{l^2,M,T}\\
&+(\frac{\partial u_1^{n+1/2}}{\partial t}-d_t\widehat{U}_1^{n+1},v^{n}_1)_{l^2,T,M}\\
&+(\frac{\partial u_2^{n+1/2}}{\partial t}-d_t\widehat{U}_2^{n+1},v^{n}_2)_{l^2,M,T}.
\endaligned
\end{equation}
Recalling the definition of the interpolation operator $\mathcal{P}_h$ and assuming that (\ref{e_hypothesesB}) holds, the first term on the right hand side of (\ref{e_error_estimate17}) can be transformed into the following:
\begin{equation}\label{e_error_estimate171}
\aligned
&(\mathcal{P}_h W^{n+1/2}[D_x\tilde{Z}]^{n+1/2}-\mu^{n+1/2}\frac{\partial \phi^{n+1/2}}{\partial x},v^{n}_1)_{l^2,T,M}\\
=& ((\mathcal{P}_h W^{n+1/2}-\mathcal{P}_h \mu^{n+1/2})[D_x\tilde{Z}]^{n+1/2},v^{n}_1)_{l^2,T,M}\\
&+((\mathcal{P}_h \mu^{n+1/2}-\mu^{n+1/2})[D_x\tilde{Z}]^{n+1/2},v^{n}_1)_{l^2,T,M}\\
&+(\mu^{n+1/2}([D_x\tilde{Z}]^{n+1/2}-\frac{\partial \phi^{n+1/2}}{\partial x}),v^{n}_1)_{l^2,T,M}\\
\leq&C\|e_{\mu}^{n+1/2}\|_{l^2,M}^2+C\|e_{\phi}^{n}\|_{l^2,M}^2+C\|e_{\phi}^{n-1}\|_{l^2,M}^2\\
&+\frac{1}{4}\|v^{n}_1\|_{l^2,T,M}^2+C(\Delta t^4+h^4+k^4).
\endaligned
\end{equation}
Similarly the second term on the right hand side of (\ref{e_error_estimate17}) can be estimated by
\begin{equation}\label{e_error_estimate172}
\aligned
&(\mathcal{P}_h W^{n+1/2}[D_y\tilde{Z}]^{n+1/2}-\mu^{n+1/2}\frac{\partial \phi^{n+1/2}}{\partial y},v^{n}_2)_{l^2,M,T}\\
\leq&C\|e_{\mu}^{n+1/2}\|_{l^2,M}^2+C\|e_{\phi}^{n}\|_{l^2,M}^2+C\|e_{\phi}^{n-1}\|_{l^2,M}^2\\
&+\frac{1}{4}\|v^{n}_2\|_{l^2,M,T}^2+C(\Delta t^4+h^4+k^4).
\endaligned
\end{equation}
Taking notice of Lemma \ref{le_auxiliary} and using Cauchy-Schwarz inequality, the last two terms on the right hand side of (\ref{e_error_estimate17}) can be controlled by
\begin{equation}\label{e_error_estimate173}
\aligned
&(\frac{\partial u_1^{n+1/2}}{\partial t}-d_t\widehat{U}_1^{n+1},v^{n}_1)_{l^2,T,M}
+(\frac{\partial u_2^{n+1/2}}{\partial t}-d_t\widehat{U}_2^{n+1},v^{n}_2)_{l^2,M,T}\\
\leq &\frac{1}{4}\|\textbf{v}^n\|_{l^2}^2+C(\Delta t^4+h^4+k^4).
\endaligned
\end{equation}
Using Lemma \ref{le_LBB} and the discrete Poincar$\acute{e}$ inequality, we can obtain
\begin{equation}\label{e_error_estimate18}
\aligned
\beta\|\widehat{e}_{p}^{n+1/2}\|_{l^2,M}
\leq&\sup\limits_{\textbf{v}\in \textbf{V}_h}\frac{(\widehat{e}_{p}^{n+1/2},d_xv^{n}_1+d_yv^{n}_2)_{l^2,M}}{\|D\textbf{v}\|}\\
\leq&C(\|d_{t}\widehat{e}_{\textbf{u},1}^n\|_{l^2,T,M}+\|d_{t}\widehat{e}_{\textbf{u},2}^n\|_{l^2,M,T}+\|d_x\widehat{e}_{\textbf{u},1}^{n+1/2}\|_{l^2,M}\\
&+\|D_y\widehat{e}_{\textbf{u},1}^{n+1/2}\|_{l^2,T_y}
+\|d_y\widehat{e}_{\textbf{u},2}^{n+1/2}\|_{l^2,M}
+\|D_x\widehat{e}_{\textbf{u},2}^{n+1/2}\|_{l^2,T_x})\\
&+C\|e_{\mu}^{n+1/2}\|_{l^2,M}+C\|e_{\phi}^{n}\|_{l^2,M}+C\|e_{\phi}^{n-1}\|_{l^2,M}\\
&+O(\Delta t^2+h^2+k^2).
\endaligned
\end{equation}
Setting $v^{n}_{1,i,j+1/2}=d_{t}\widehat{e}_{\textbf{u},1,i,j+1/2}^{n+1}$, $v^{n}_{2,i+1/2,j}=d_{t}\widehat{e}_{\textbf{u},2,i+1/2,j}^{n+1}$ in (\ref{e_error_estimate17}) leads to 
\begin{equation}\label{e_error_estimate19}
\aligned
&\|d_{t}\widehat{e}_{\textbf{u},1}^{n+1}\|_{l^2,T,M}^2+\|d_{t}\widehat{e}_{\textbf{u},2}^{n+1}\|_{l^2,M,T}^2+\nu \frac{\|\textbf{D}\widehat{e}_{\textbf{u}}^{n+1}\|^2-\|\textbf{D}\widehat{e}_{\textbf{u}}^{n}\|^2}{2\Delta t}\\
=&(\mathcal{P}_h W^{n+1/2}[D_x\tilde{Z}]^{n+1/2}-\mu^{n+1/2}\frac{\partial \phi^{n+1/2}}{\partial x},d_{t}\widehat{e}_{\textbf{u},1}^{n+1})_{l^2,T,M}\\
&+(\mathcal{P}_h W^{n+1/2}[D_y\tilde{Z}]^{n+1/2}-\mu^{n+1/2}\frac{\partial \phi^{n+1/2}}{\partial y},d_{t}\widehat{e}_{\textbf{u},2}^{n+1})_{l^2,M,T}\\
&+(\frac{\partial u_1^{n+1/2}}{\partial t}-d_t\widehat{U}_1^{n+1},d_{t}\widehat{e}_{\textbf{u},1}^{n+1})_{l^2,T,M}\\
&+(\frac{\partial u_2^{n+1/2}}{\partial t}-d_t\widehat{U}_2^{n+1},d_{t}\widehat{e}_{\textbf{u},2}^{n+1})_{l^2,M,T}.
\endaligned
\end{equation}
Noting (\ref{e_error_estimate171})-(\ref{e_error_estimate173}), we have
\begin{equation}\label{e_error_estimate20}
\aligned
&\|d_{t}\widehat{e}_{\textbf{u},1}^{n+1}\|_{l^2,T,M}^2+\|d_{t}\widehat{e}_{\textbf{u},2}^{n+1}\|_{l^2,M,T}^2+\nu \frac{\|\textbf{D}\widehat{e}_{\textbf{u}}^{n+1}\|^2-\|\textbf{D}\widehat{e}_{\textbf{u}}^{n}\|^2}{2\Delta t}\\
\leq&C\|e_{\mu}^{n+1/2}\|_{l^2,M}^2+C\|e_{\phi}^{n}\|_{l^2,M}^2+C\|e_{\phi}^{n-1}\|_{l^2,M}^2\\
&+\frac{1}{2}\|d_{t}\widehat{e}_{\textbf{u},1}^{n+1}\|_{l^2,T,M}^2+\frac{1}{2}\|d_{t}\widehat{e}_{\textbf{u},2}^{n+1}\|_{l^2,M,T}^2\\
&+C(\Delta t^4+h^4+k^4).
\endaligned
\end{equation}
Multiplying (\ref{e_error_estimate20}) by $2\Delta t$, and summing over $n$ from $1$ to $m$ result in
\begin{equation}\label{e_error_estimate21}
\aligned
&\sum\limits_{n=0}^{m}\Delta t(\|d_{t}\widehat{e}_{\textbf{u},1}^{n+1}\|_{l^2,T,M}^2+\|d_{t}\widehat{e}_{\textbf{u},2}^{n+1}\|_{l^2,M,T}^2)\\
&+\nu\|\textbf{D}\widehat{e}_{\textbf{u}}^{m+1}\|^2-\nu\|\textbf{D}\widehat{e}_{\textbf{u}}^{0}\|^2\\
\leq& C\sum\limits_{n=0}^{m}\Delta t\|e_{\mu}^{n+1/2}\|_{l^2,M}^2+C\sum\limits_{n=0}^{m}\Delta t\|e_{\phi}^{n}\|_{l^2,M}^2\\
&+C(\Delta t^4+h^4+k^4).
\endaligned
\end{equation}
Since $\widehat{e}_{\textbf{u},1,0,j+1/2}^{n}=\widehat{e}_{\textbf{u},1,N_x,j+1/2}^{n}
$ and $\widehat{e}_{\textbf{u},2,i+1/2,0}^{n}=\widehat{e}_{\textbf{u},2,i+1/2,N_y}^{n}
$, then we can obtain the following discrete Poincar\'{e} inequality.
\begin{equation}\label{e_error_estimate22}
\aligned
&\|\widehat{e}_{\textbf{u}}^{m+1}\|_{l^2}^2
\leq C\|\textbf{D}\widehat{e}_{\textbf{u}}^{m+1}\|^2\\
\leq &C\sum\limits_{n=0}^{m}\Delta t\|e_{\mu}^{n+1/2}\|_{l^2,M}^2+C\sum\limits_{n=0}^{m}\Delta t\|e_{\phi}^{n}\|_{l^2,M}^2\\
&+C(\Delta t^4+h^4+k^4).
\endaligned
\end{equation}
Recalling (\ref{e_error_estimate18}), we have
\begin{equation}\label{e_error_estimate23}
\aligned
\sum\limits_{n=0}^{m}\Delta t\|\widehat{e}_{p}^{n+1/2}\|_{l^2,M}
\leq &C\sum\limits_{n=0}^{m}\Delta t\|e_{\mu}^{n+1/2}\|_{l^2,M}^2+C\sum\limits_{n=0}^{m}\Delta t\|e_{\phi}^{n}\|_{l^2,M}^2\\
&+C(\Delta t^4+h^4+k^4),
\endaligned
\end{equation}
which leads to the desired result (\ref{e_error_estimate22*****}).
\end{proof}

\subsection{Verification of the hypotheses \eqref{e_hypotheses} and the main results}

\begin{lemma}
 Suppose that $\phi\in W^{1}_{\infty}(J;W^{4}_{\infty}(\Omega)) \cap W^{3}_{\infty}(J;W^{1}_{\infty}(\Omega)),\mu\in L^{\infty}(J;W^{4}_{\infty}(\Omega))$, $\textbf{u}\in W^{3}_{\infty}(J;W^{4}_{\infty}(\Omega))^2$, $p\in W^{3}_{\infty}(J;W^{3}_{\infty}(\Omega))$ and $\Delta t\leq C(h+k)$, then the hypotheses \eqref{e_hypotheses} holds.
\end{lemma}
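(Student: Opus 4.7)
The plan is a standard bootstrapping argument by induction on the time level $n$, using the two a priori error estimates already established under the hypotheses \eqref{e_hypotheses} (Lemmas~\ref{lem: error_estimates1} and \ref{lem: error_estimates2}), together with a discrete Gronwall argument and an inverse inequality. The CFL-type restriction $\Delta t \le C(h+k)$ enters only to absorb the inverse-inequality factor that arises when we pass from discrete $l^2$-norms to $l^\infty$-norms.

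First I would verify the base case. Since $Z^0$ (and the auxiliary $Z^{-1}=Z^0$ used in the extrapolation) are defined as the pointwise values of $\phi^0$, the smoothness assumption $\phi \in W^{1}_\infty(J;W^{4}_\infty(\Omega))$ gives $\|Z^0\|_\infty \le \|\phi^0\|_\infty$ and $\|\textbf{D}Z^0\|_\infty \le \|\nabla\phi^0\|_\infty + O(\hat h)$, so we may pick $C_* = 2\|\phi\|_{L^\infty(J;L^\infty)}$ and $C^* = 2\|\phi\|_{L^\infty(J;W^1_\infty)}$ at the start.

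For the inductive step, assume \eqref{e_hypotheses} holds for all levels $0,1,\ldots,n$. Then Lemmas~\ref{lem: error_estimates1} and \ref{lem: error_estimates2} apply and may be combined: substituting the bound \eqref{e_error_estimate22*****} on $\|\widehat{e}_{\textbf{u}}^{n+1/2}\|_{l^2}^2$ back into the right-hand side of \eqref{e_error_estimate13**} eliminates the velocity error, yielding an inequality in the variables $\|e_\phi^{m+1}\|_{l^2,M}^2$, $\|\textbf{D}e_\phi^{m+1}\|_{l^2}^2$, $(e_r^{m+1})^2$ and $\sum \Delta t\|e_\mu^{n+1/2}\|_{l^2,M}^2$ alone, with forcing of size $O(\Delta t^4+h^4+k^4)$. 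A discrete Gronwall lemma then gives
\begin{equation*}
\|e_\phi^{n+1}\|_{l^2,M} + \|\textbf{D}e_\phi^{n+1}\|_{l^2} + \bigl(\sum_{k\le n}\Delta t\|e_\mu^{k+1/2}\|_{l^2,M}^2\bigr)^{1/2} \le C(\Delta t^2+h^2+k^2),
\end{equation*}
and feeding this back into Lemma~\ref{lem: error_estimates2} yields the same rate for $\|\widehat{e}_{\textbf{u}}^{n+1}\|_{l^2}$, $\|\textbf{D}\widehat{e}_{\textbf{u}}^{n+1}\|$ and $\sum\Delta t\|\widehat e_p^{k+1/2}\|_{l^2,M}^2$.

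Finally, to close the induction, I would use the standard MAC inverse inequalities $\|v_h\|_\infty \le C\hat h^{-1}\|v_h\|_{l^2,M}$ and $\|\textbf{D}v_h\|_\infty \le C\hat h^{-1}\|\textbf{D}v_h\|_{l^2}$ together with the triangle inequality
\begin{equation*}
\|Z^{n+1}\|_\infty \le \|\phi^{n+1}\|_\infty + \|e_\phi^{n+1}\|_\infty,\qquad \|\textbf{D}Z^{n+1}\|_\infty \le \|\textbf{D}\phi^{n+1}\|_\infty + C\hat h + \|\textbf{D}e_\phi^{n+1}\|_\infty.
\end{equation*}
Applying the inverse estimate and the just-derived bound yields
\begin{equation*}
\|e_\phi^{n+1}\|_\infty + \|\textbf{D}e_\phi^{n+1}\|_\infty \le C\hat h^{-1}(\Delta t^2+\hat h^2) \le C(\hat h + \hat h^{-1}\Delta t^2),
\end{equation*}
which, thanks to the CFL condition $\Delta t \le C(h+k)$, is $O(\hat h)$ and hence can be made smaller than, say, $\tfrac12\|\phi\|_{L^\infty(J;W^1_\infty)}$ for $\hat h$ sufficiently small. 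This strictly improves the induction constants $C_*, C^*$ and closes the loop.

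The main obstacle is the logical circularity: Lemmas~\ref{lem: error_estimates1} and \ref{lem: error_estimates2} already assume \eqref{e_hypotheses}, so I must be careful that the induction hypothesis is used only to produce those error estimates at level $n+1$, and that the resulting bound on $\|Z^{n+1}\|_\infty$, $\|\textbf{D}Z^{n+1}\|_\infty$ is strictly better than the assumed constants (not merely bounded by them). This is exactly where the CFL constraint $\Delta t \le C(h+k)$ is indispensable: without it, the inverse-inequality factor $\hat h^{-1}\Delta t^2$ would not vanish as $\hat h\to 0$ and the induction would not close.
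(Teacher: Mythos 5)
Your proposal is correct and follows essentially the same route as the paper: an induction on the time level in which the hypotheses at previous levels activate Lemmas~\ref{lem: error_estimates1} and \ref{lem: error_estimates2}, a Gronwall argument gives $\|\textbf{D}e_{\phi}\|_{l^2}\le C(\Delta t^2+\hat h^2)$, and an inverse inequality combined with $\Delta t\le C\hat h$ shows $\|\textbf{D}Z^l\|_\infty\le C_1\hat h^{-1}(\Delta t^2+\hat h^2)+\|\textbf{D}\phi^l\|_\infty\le C^*$ for $\hat h$ small, closing the loop. The only cosmetic difference is that the paper inserts the interpolant $\textbf{I}_h\textbf{D}\phi^l$ as an intermediate term before applying the inverse estimate, while you apply the triangle inequality to $e_\phi^{l}$ directly; both are equivalent.
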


\begin{proof}
 The proof of (\ref{e_hypothesesA}) is essentially identical with the estimates in \cite{li2018energy}. Thus we only provide a detail proof for (\ref{e_hypothesesB}) below.
 
\textit{\textbf{Step 1}} (Definition of $C^*$): Using the scheme (\ref{e_model_r_full_discrete1})-(\ref{e_model_r_full_discrete6}) for $n=0$, Lemma \ref{lem: error_estimates1} and \ref{lem: error_estimates2}, and the inverse assumption, we can get the approximation $\textbf{D}Z^1$ and the following property:
\begin{equation*}
\aligned
\|\textbf{D}Z^1\|_{\infty}=& \|\textbf{D}Z^1-\textbf{I}_h\textbf{D}\phi^1\|_{\infty}+\|\textbf{I}_h\textbf{D}\phi^1-\textbf{D}\phi^1\|_{\infty}+\|\textbf{D}\phi^1\|_{\infty}\\
\leq&C\hat{h}^{-1}\|\textbf{D}Z^1-\textbf{I}_h\textbf{D}\phi^1\|_{l^2}+\|\textbf{I}_h\textbf{D}\phi^1-\textbf{D}\phi^1\|_{\infty}+\|\textbf{D}\phi^1\|_{\infty}\\
\leq& C\hat{h}^{-1}(\|\textbf{D}e_{\phi}^1\|_{l^2}+\|\textbf{I}_h\textbf{D}\phi^1-\textbf{D}\phi^1\|_{l^2})+\|\textbf{I}_h\textbf{D}\phi^1-\textbf{D}\phi^1\|_{\infty}+\|\textbf{D}\phi^1\|_{\infty}\\
\leq& C\hat{h}^{-1}(\Delta t^2+\hat{h}^2)+\|\textbf{D}\phi^1\|_{\infty}\leq C.
\endaligned
\end{equation*}
where $\hat{h}$ and $\Delta t$ are selected such that $\hat{h}^{-1}\Delta t^2$ is sufficiently small.

 Thus define the positive constant $C^*$ independent of $\hat{h}$ and $\Delta t$ such that
\begin{align*}
C^*&\geq \max\{\|\textbf{D}Z^1\|_{\infty}, 2\|\textbf{D}Z^{n}\|_{\infty}\}.
\end{align*}

\textit{\textbf{Step 2}} (Induction): By the definition of $C^*$, it is trivial that hypothesis (\ref{e_hypothesesB}) holds true for $l=1$. Supposing that $\|\textbf{D}Z^{l-1}\|_{\infty}\leq C^*$ holds true for an integer $l=1,\cdots,N-1$, by Lemmas \ref{lem: error_estimates1} and \ref{lem: error_estimates2} with $m=l$, we have that
$$\|\textbf{D}e_{\phi}^l\|_{l^2}\leq C(\hat{h}^2+\Delta t^2).$$
Next we prove that $\|\textbf{D}Z^{l}\|_{\infty}\leq C^*$ holds true.
Since
\begin{equation}\label{e_hypothesis_proof1}
\aligned
\|\textbf{D}Z^l\|_{\infty}=& \|\textbf{D}Z^l-\textbf{I}_h\textbf{D}\phi^l\|_{\infty}+\|\textbf{I}_h\textbf{D}\phi^l-\textbf{D}\phi^l\|_{\infty}+\|\textbf{D}\phi^l\|_{\infty}\\
\leq& C\hat{h}^{-1}(\|\textbf{D}e_{\phi}^l\|_{l^2}+\|\textbf{I}_h\textbf{D}\phi^l-\textbf{D}\phi^l\|_{l^2})+\|\textbf{I}_h\textbf{D}\phi^l-\textbf{D}\phi^l\|_{\infty}+\|\textbf{D}\phi^l\|_{\infty}\\
\leq& C_1\hat{h}^{-1}(\Delta t^2+\hat{h}^2)+\|\textbf{D}\phi^l\|_{\infty}.
\endaligned
\end{equation}
Let $\Delta t\leq C_2\hat{h}$ and a positive constant $\hat{h}_1$ be small enough to satisfy
$$C_1(1+C_2^2)\hat{h}_1\leq\frac{C^*}{2}.$$
Then for $\hat{h}\in (0,\hat{h}_1],$ equation (\ref{e_hypothesis_proof1}) can be bounded by
\begin{equation}\label{e_hypothesis_proof2}
\aligned
\|\textbf{D}Z^l\|_{\infty}\leq& C_1\hat{h}^{-1}(\Delta t^2+\hat{h}^2)+\|\textbf{D}\phi^l\|_{\infty}\\
\leq&C_1(1+C_2^2)\hat{h}_1+\frac{C^*}{2}\leq C^*.
\endaligned
\end{equation}
Then the proof of induction hypothesis (\ref{e_hypothesesB}) ends.
\end{proof}

Recalling (\ref{e_error_estimate22}), we can transform (\ref{e_error_estimate13**}) into the following:
\begin{equation}\label{e_error_estimate13**_transformed}
\aligned
&\|e_{\phi}^{m+1}\|_{l^2,M}^2+\frac{M}{2}\sum\limits_{n=0}^{m}\Delta t\|e_{\mu}^{n+1/2}\|_{l^2,M}^2
+\lambda(e_r^{m+1})^2\\
&+\frac{\lambda}{2}\|\textbf{D}e_{\phi}^{m+1}\|^2_{l^2}+\frac{M}{4}\sum_{n=0}^{m}\Delta t\|\textbf{D}e_{\mu}^{n+1/2}\|^2_{l^2}\\
\leq&C\sum_{n=0}^{m+1}\Delta t\|\textbf{D}e_{\phi}^n\|_{l^2}^2+C\sum_{n=0}^{m}\Delta t\|D\widehat{e}_{\textbf{u}}^{n+1/2}\|^2\\
&+C\sum_{n=0}^{m+1}\Delta t\|e_{\phi}^n\|_{l^2,M}^2
+C\sum_{n=0}^{m+1}\Delta t(e_r^{n})^2\\
&+C(\Delta t^4+h^4+k^4),\quad   \ m\leq N,
\endaligned
\end{equation}
Multiplying (\ref{e_error_estimate13**_transformed}) and (\ref{e_error_estimate22*****}) by $4C$ and $M$ respectively and using Gronwall's inequality, we can deduce that
\begin{equation}\label{e_error_estimate24}
\aligned
&\|e_{\phi}^{m+1}\|_{l^2,M}^2+\sum\limits_{n=0}^{m}\Delta t\|e_{\mu}^{n+1/2}\|_{l^2,M}^2
+(e_r^{m+1})^2\\
&+\|\textbf{D}e_{\phi}^{m+1}\|^2_{l^2}+\sum_{n=0}^{m}\Delta t\|\textbf{D}e_{\mu}^{n+1/2}\|^2_{l^2}+\|\widehat{e}_{\textbf{u}}^{m+1}\|_{l^2}^2\\
&+\|\textbf{D}\widehat{e}_{\textbf{u}}^{m+1}\|^2
+\sum\limits_{n=0}^{m}\Delta t\|\widehat{e}_{p}^{n+1/2}\|_{l^2,M}^2\\
\leq &C(\Delta t^4+h^4+k^4),\quad   \ m\leq N.
\endaligned
\end{equation}
Thus we have
\begin{equation}\label{e_error_estimate25}
\aligned
&\|Z^{m+1}-\phi^{m+1}\|_{l^2,M}+\|\textbf{D}Z^{m+1}-\textbf{D}\phi^{m+1}\|_{l^2}+
|R^{m+1}-r^{m+1}|\\
&+\left(\sum_{n=0}^{m}\Delta t\|\textbf{D}W^{n+1/2}-\textbf{D}\mu^{n+1/2}\|_{l^2}^2\right)^{1/2}\\
&+\left(\sum_{n=0}^{m}\Delta t\|W^{n+1/2}-\mu^{n+1/2}\|_{l^2,M}^2\right)^{1/2}\\
\leq&C(\|\phi\|_{W^{1}_{\infty}(J;W^{4}_{\infty}(\Omega))}+\|\mu\|_{L^{\infty}(J;W^{4}_{\infty}(\Omega))} )(h^2+k^2)\\
&+C\|\phi\|_{W^{3}_{\infty}(J;W^{1}_{\infty}(\Omega))}\Delta t^2.
\endaligned
\end{equation}
Recalling Lemma \ref{le_auxiliary}, we can obtain that
\begin{equation}\label{e_error_estimate26}
\aligned
\|d_x(U^{m}_1-{u}^{m}_1)\|_{l^2,M}+\|d_y(U^{m}_2-{u}^{m}_2)\|_{l^2,M}
\leq O(\Delta t^2+h^2+k^2),
\endaligned
\end{equation}
\begin{equation}\label{e_error_estimate27}
\aligned
\|U^{m}_1-{u}^{m}_1\|_{l^2,T,M}+&\|U^{m}_2-{u}^{m}_2\|_{l^2,M,T}+\left(\sum\limits_{l=1}^{m}\Delta t\|(P-p)^{l-1/2}\|^2_{l^2,M}\right)^{1/2}\\
\leq &O(\Delta t^2+h^2+k^2),
\endaligned
\end{equation}
\begin{equation}\label{e_error_estimate28}
\aligned
& \|D_y(U^{m}_1-{u}^{m}_1)\|_{l^2,T_y}\leq O(\Delta t^2+h^2+k^{3/2}),
\endaligned
\end{equation}
\begin{equation}\label{e_error_estimate29}
\aligned
&\|D_x(U^{m}_2-{u}^{m}_2)\|_{l^2,T_x}\leq O(\Delta t^2+h^{3/2}+k^2).
\endaligned
\end{equation}

Combing the above results together, we finally obtain our main results:
\begin{theorem}\label{thm: error_estimates}
Assuming $\phi\in W^{1}_{\infty}(J;W^{4}_{\infty}(\Omega)) \cap W^{3}_{\infty}(J;W^{1}_{\infty}(\Omega)),\mu\in L^{\infty}(J;W^{4}_{\infty}(\Omega))$, $\textbf{u}\in W^{3}_{\infty}(J;W^{4}_{\infty}(\Omega))^2$, $p\in W^{3}_{\infty}(J;W^{3}_{\infty}(\Omega))$ and $\Delta t\leq C(h+k)$, then for the Cahn-Hilliard-Stokes system, there exists a positive constant $C$ independent of $h$, $k$ and $\Delta t$ such that
\begin{equation}\label{e_error_estimate1}
\aligned
&\|Z^{m+1}-\phi^{m+1}\|_{l^2,M}+\|\textbf{D}Z^{m+1}-\textbf{D}\phi^{m+1}\|_{l^2}+
|R^{m+1}-r^{m+1}|\\
&+\left(\sum_{n=0}^{m}\Delta t\|\textbf{D}W^{n+1/2}-\textbf{D}\mu^{n+1/2}\|_{l^2}^2\right)^{1/2}\\
&+\left(\sum_{n=0}^{m}\Delta t\|W^{n+1/2}-\mu^{n+1/2}\|_{l^2,M}^2\right)^{1/2}\\
\leq&C(\|\phi\|_{W^{1}_{\infty}(J;W^{4}_{\infty}(\Omega))}+\|\mu\|_{L^{\infty}(J;W^{4}_{\infty}(\Omega))} )(h^2+k^2)\\
&+C\|\phi\|_{W^{3}_{\infty}(J;W^{1}_{\infty}(\Omega))}\Delta t^2,\quad   \ m\leq N,
\endaligned
\end{equation}

\begin{equation}\label{e_error_estimate26***}
\aligned
\|d_x(U^{m}_1-{u}^{m}_1)\|_{l^2,M}+\|d_y(U^{m}_2-{u}^{m}_2)\|_{l^2,M}
\leq O(\Delta t^2+h^2+k^2), \quad   \ m\leq N,
\endaligned
\end{equation}
\begin{equation}\label{e_error_estimate27***}
\aligned
\|\textbf{U}^{m}-\textbf{u}^{m}\|_{l^2}+\left(\sum\limits_{l=1}^{m}\Delta t\|(P-p)^{l-1/2}\|^2_{l^2,M}\right)^{1/2}\leq &O(\Delta t^2+h^2+k^2),\quad   \ m\leq N,
\endaligned
\end{equation}
\begin{equation}\label{e_error_estimate28***}
\aligned
& \|D_y(U^{m}_1-{u}^{m}_1)\|_{l^2,T_y}\leq O(\Delta t^2+h^2+k^{3/2}),\quad   \ m\leq N,
\endaligned
\end{equation}
\begin{equation}\label{e_error_estimate29***}
\aligned
&\|D_x(U^{m}_2-{u}^{m}_2)\|_{l^2,T_x}\leq O(\Delta t^2+h^{3/2}+k^2),\quad   \ m\leq N.
\endaligned
\end{equation}
\end{theorem}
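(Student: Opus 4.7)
The plan is to combine the two a priori estimates of Lemmas \ref{lem: error_estimates1} and \ref{lem: error_estimates2} into a single coupled inequality, close it with Gronwall's lemma under the standing $L^\infty$ hypotheses \eqref{e_hypotheses}, then promote the estimates from $(Z,W,R,\widehat{\mathbf{U}},\widehat{P})$ to $(Z,W,R,\mathbf{U},P)$ via the triangle inequality together with the auxiliary estimates of Lemma \ref{le_auxiliary}. Before any of this is rigorous, however, one must verify that the hypotheses \eqref{e_hypotheses} actually hold along the discrete trajectory; that is carried out by induction in $n$, using an inverse estimate of the form $\|\cdot\|_\infty\le C\hat h^{-1}\|\cdot\|_{l^2}$ and the CFL-type constraint $\Delta t\le C(h+k)$ to absorb the factor $\hat h^{-1}$ against the $\mathcal O(\hat h^2+\Delta t^2)$ error bound produced by the lemmas.

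Concretely, first I would transform the right-hand side of \eqref{e_error_estimate13**} so that the velocity contribution is written in terms of $\|\mathbf{D}\widehat e_{\mathbf u}^{n+1/2}\|$ (as already indicated in \eqref{e_error_estimate13**_transformed}), using the discrete Poincaré inequality \eqref{e_error_estimate22}. Adding this to \eqref{e_error_estimate22*****} multiplied by a suitable constant $M$ (so that the $\sum\Delta t\|e_\mu^{n+1/2}\|^2_{l^2,M}$ terms are absorbed on the left), I obtain a discrete Gronwall-type inequality
\begin{equation*}
A^{m+1}+\sum_{n=0}^m\Delta t\,B^{n+1/2}\le C\sum_{n=0}^{m+1}\Delta t\,A^n+C(\Delta t^4+h^4+k^4),
\end{equation*}
where $A^n$ collects $\|e_\phi^n\|_{l^2,M}^2+\|\mathbf{D}e_\phi^n\|_{l^2}^2+(e_r^n)^2+\|\widehat e_{\mathbf u}^n\|_{l^2}^2+\|\mathbf{D}\widehat e_{\mathbf u}^n\|^2$ and $B^{n+1/2}$ collects the dissipative norms $\|e_\mu^{n+1/2}\|_{l^2,M}^2+\|\mathbf{D}e_\mu^{n+1/2}\|_{l^2}^2+\|\widehat e_p^{n+1/2}\|_{l^2,M}^2$. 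The discrete Gronwall lemma then yields the unified bound \eqref{e_error_estimate24} on all the SAV/MAC errors against the auxiliary field.

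Next, the estimates on $\mathbf{U}-\mathbf{u}$ and $P-p$ in the norms appearing in \eqref{e_error_estimate26***}--\eqref{e_error_estimate29***} are obtained by splitting
\begin{equation*}
\mathbf{U}^m-\mathbf{u}^m = \widehat e_{\mathbf u}^m+(\widehat{\mathbf U}^m-\mathbf{u}^m), \qquad P^m-p^m=\widehat e_p^m+(\widehat P^m-p^m),
\end{equation*}
bounding $\widehat e_{\mathbf u}$ and $\widehat e_p$ by \eqref{e_error_estimate24} and bounding the auxiliary pieces by Lemma \ref{le_auxiliary}. In particular, the anisotropic orders $k^{3/2}$ in \eqref{e_error_estimate28***} and $h^{3/2}$ in \eqref{e_error_estimate29***} come entirely from the auxiliary estimates \eqref{e132}--\eqref{e134}; the SAV contribution is $\mathcal O(\hat h^2+\Delta t^2)$ and is therefore absorbed without loss.

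The main obstacle is the bootstrap: Lemmas \ref{lem: error_estimates1} and \ref{lem: error_estimates2} presuppose \eqref{e_hypotheses}, while their conclusion is what makes the induction step for \eqref{e_hypotheses} possible. The delicate point is the gradient bound \eqref{e_hypothesesB}. I would proceed exactly as in the lemma just above the theorem: assuming $\|\mathbf{D}Z^{l-1}\|_\infty\le C^*$, invoke Lemmas \ref{lem: error_estimates1}--\ref{lem: error_estimates2} at level $l$ to get $\|\mathbf{D}e_\phi^l\|_{l^2}\le C(\hat h^2+\Delta t^2)$, then apply the inverse inequality to $\mathbf{D}Z^l-\mathbf{I}_h\mathbf{D}\phi^l$ and the approximation property \eqref{e_H1 projection_error4} to conclude $\|\mathbf{D}Z^l\|_\infty\le C_1\hat h^{-1}(\Delta t^2+\hat h^2)+\|\mathbf{D}\phi^l\|_\infty\le C^*$, provided $\hat h$ is small enough and $\Delta t\le C(h+k)$. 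The $L^\infty$ bound \eqref{e_hypothesesA} is handled analogously but is easier because it only costs one factor of $\hat h^{-1}$ on a quantity that is already $\mathcal O(\hat h^2)$ in $l^2$. With the hypotheses verified for every $n\le N$, the combined Gronwall estimate is unconditional in the sense of not requiring smallness of the solution, and the triangle-inequality step finishes the proof of \eqref{e_error_estimate1}--\eqref{e_error_estimate29***}.
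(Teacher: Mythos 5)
Your proposal is correct and follows essentially the same route as the paper: combine Lemmas \ref{lem: error_estimates1} and \ref{lem: error_estimates2} (after rewriting the velocity term via the discrete Poincar\'e inequality as in \eqref{e_error_estimate13**_transformed}), close with the discrete Gronwall lemma to get \eqref{e_error_estimate24}, verify the hypotheses \eqref{e_hypotheses} by induction with the inverse inequality and $\Delta t\le C(h+k)$, and transfer to $(\textbf{U},P)$ by the triangle inequality and Lemma \ref{le_auxiliary}, which is also the source of the anisotropic $h^{3/2},k^{3/2}$ rates. No substantive differences from the paper's argument.
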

\medskip

 \section{Numerical experiments} \label{Numerical experiments}
In this section we provide some 2-D numerical experiments to gauge the SAV/CN-FD method developed in the previous sections.

We transform (\ref{energy1}) as
\begin{equation}\label{definition of energy_transformed}
\aligned
E(\phi)=\int_{\Omega}\{\frac 1 2|\textbf{u}|^2+\lambda(\frac{1}{2}|\nabla \phi |^2+\frac{\beta}{2\epsilon^2}\phi^2
+\frac{1}{4\epsilon^2}(\phi^2-1-\beta)^2 -\frac{\beta^2+2\beta}{4\epsilon^2})\}d\textbf{x},
\endaligned
\end{equation}
where $\beta$ is a positive number to be chosen. To apply our scheme (\ref{e_model_r_full_discrete1})-(\ref{e_model_r_full_discrete6}) to the system (\ref{e_model}), we drop the constant in the free energy and specify $\displaystyle E_1(\phi)=\frac{1}{4\epsilon^2}\int_{\Omega}(\phi^2-1-\beta)^2d\textbf{x}$, and modify (\ref{e_model_r_full_discrete2}) into
\begin{equation}\label{e_model_r_full_discrete2_modify}
\aligned
W_{i+1/2,j+1/2}^{n+1/2}=&-\lambda[d_xD_xZ+d_yD_yZ]_{i+1/2,j+1/2}^{n+1/2}+
\frac{\lambda\beta}{\epsilon^2}Z_{i+1/2,j+1/2}^{n+1/2}\\
    &+\lambda\frac{R^{n+1/2}}{\sqrt{E_1^h(\tilde{Z}^{n+1/2})}}F^{\prime}(\tilde{Z}_{i,j}^{n+1/2}).
\endaligned
\end{equation}
Then we can obtain
\begin{equation}\label{e_numerical_1}
\aligned
F^{\prime}(\phi)=\frac{\delta E_1}{\delta \phi}=\frac{1}{\epsilon^2}\phi(\phi^2-1-\beta).
\endaligned
\end{equation}

For simplicity, we define
\begin{flalign*}
\renewcommand{\arraystretch}{1.5}
  \left\{
   \begin{array}{l}
\|f-g\|_{\infty,2}=\max\limits_{0\leq n\leq m}\left\{\|f^{n+q}-g^{n+q}\|_X\right\},\\
\|f-g\|_{2,2}=\left(\sum\limits_{n=0}^{m}\Delta t\left\|f^{n+q}-g^{n+q}\right\|_{X}^2\right)^{1/2},\\
\|R-r\|_{\infty}=\max\limits_{0\leq n\leq m}\{R^{n+1}-r^{n+1}\},\\
\end{array}\right.
\end{flalign*}
where $q=\frac{1}{2},~1$ and $X$ is the corresponding discrete $L^2$ norm. In the following simulations, we choose $ \Omega=(0,1)\times(0,1)$, $\beta=5$ and $\gamma=1$.

\subsection{Convergence rates of the SAV/CN-FD scheme for the Cahn-Hilliard-Navier-Stokes phase field model}

In this example 1, we take $T=0.1$, $\Delta t=1E-4$, $\lambda=0.1$, $\nu=0.1$, $\epsilon^2=0.1$, $M=0.001$, and the initial solution $\phi_0=\cos(\pi x)\cos(\pi y)$, $u_1(x,y)=-x^2(x-1)^2(y-1)(2y-1)y/128$ and $u_2(x,y)=-u_1(y,x)$. We measure Cauchy error to get around the fact that we do not have possession of exact solution. Specifically, the error between two different grid spacings $h$ and $\frac{h}{2}$ is calculated by $\|e_{\zeta}\|=\|\zeta_h-\zeta_{h/2}\|$.

 The numerical results are listed in Tables \ref{table1_example1}-\ref{table1_example3} and give solid supporting evidence for the expected second-order convergence of the SAV/CN-FD scheme for the Cahn-Hilliard-Navier-Stokes phase field model, which are consistent with the error estimates in Theorem \ref{thm: error_estimates}. Here we only present the results for $u_1$ since the results for $u_2$ are similar to $u_1$. 

\begin{table}[htbp]
\renewcommand{\arraystretch}{1.1}
\small
\centering
\caption{Errors and convergence rates of the phase function and auxiliary scalar function for example 1.}\label{table1_example1}
\begin{tabular}{p{1cm}p{1.5cm}p{0.7cm}p{1.8cm}p{0.7cm}p{1.8cm}p{0.7cm}}\hline
$h$    &$\|e_Z\|_{\infty,2}$    &Rate &$\|e_{\textbf{D}Z}\|_{\infty,2}$   &Rate  
&$\|e_{R}\|_{\infty}$    &Rate   \\ \hline
$1/10$     &3.09E-3                & ---    &1.37E-2         &---  &2.69E-5         &---\\
$1/20$      &7.74E-4                & 2.00    &3.43E-3         &1.99 &6.76E-6         &1.99\\
$1/40$      &1.93E-4                &2.00     &8.60E-4         &2.00 &1.69E-6         &2.00\\
$1/80$     &4.84E-5                &2.00    &2.15E-4         &2.00   &4.23E-7         &2.00\\
\hline
\end{tabular}
\end{table}

\begin{table}[htbp]
\renewcommand{\arraystretch}{1.1}
\small
\centering
\caption{Errors and convergence rates of the chemical potential and velocity for example 1.}\label{table1_example2}
\begin{tabular}{p{1cm}p{1.5cm}p{0.7cm}p{1.8cm}p{0.7cm}p{1.8cm}p{0.7cm}}\hline
$h$    &$\|e_{W}\|_{2,2}$    &Rate &$\|e_{\textbf{D}W}\|_{2,2}$   &Rate  
&$\|e_{\textbf{U}}\|_{\infty,2}$    &Rate   \\ \hline
$1/10$     &1.59E-3               & ---    &1.57E-2         &---  &1.67E-4         &---\\
$1/20$      &4.01E-4                & 1.98    &4.09E-3         &1.94 &3.67E-5         &2.19\\
$1/40$      &1.01E-4                &2.00     &1.03E-3         &1.99 &8.88E-6         &2.05\\
$1/80$     &2.51E-5                &2.00    &2.59E-4         &2.00   &2.20E-6         &2.01\\
\hline
\end{tabular}
\end{table}

\begin{table}[htbp]
\renewcommand{\arraystretch}{1.1}
\small
\centering
\caption{Errors and convergence rates of the velocity and pressure for example 1.}\label{table1_example3}
\begin{tabular}{p{0.7cm}p{1.8cm}p{0.7cm}p{1.8cm}p{0.7cm}p{1.7cm}p{0.7cm}}\hline
$h$    &$\|e_{d_xU_1}\|_{\infty,2}$    &Rate &$\|e_{D_yU_1}\|_{\infty,2}$   &Rate  
&$\|e_{P}\|_{2,2}$    &Rate   \\ \hline
$1/10$     &9.14E-4                & ---    &1.54E-3         &---  &1.06E-3         &---\\
$1/20$      &2.05E-4                & 2.16    &4.28E-4         &1.85 &2.63E-4         &2.01\\
$1/40$      &4.99E-5                &2.04     &1.36E-4         &1.66 &6.56E-5         &2.00\\
$1/80$     &1.24E-5                &2.01   &4.56E-5         &1.57   &1.64E-5         &2.00\\
\hline
\end{tabular}
\end{table}

\subsection{The dynamics of a square shape fluid}
In this example 2, the evolution of a square shaped fluid bubble is simulated by using the following parameters:
$$ \epsilon=0.01,\ \nu=1,\ \lambda=0.01,\ M=0.002,\  \hat{h}=1/100,\ \Delta t=1E-3.$$
The initial velocity and pressure are set to zero. The initial phase function is chosen to be a rectangular bubble, i.e., $\phi=1$ inside the bubble and $\phi=-1$ outside the bubble. Snapshots of the phase evolution at time $t=0,5,6,8,10,$ respectively are presented in Fig. \ref{fig_ square shape}. As we can see, the rectangular bubble deforms into a circular bubble due to the surface tension.
\begin{figure}[!htp]
\centering
\includegraphics[scale=0.25]{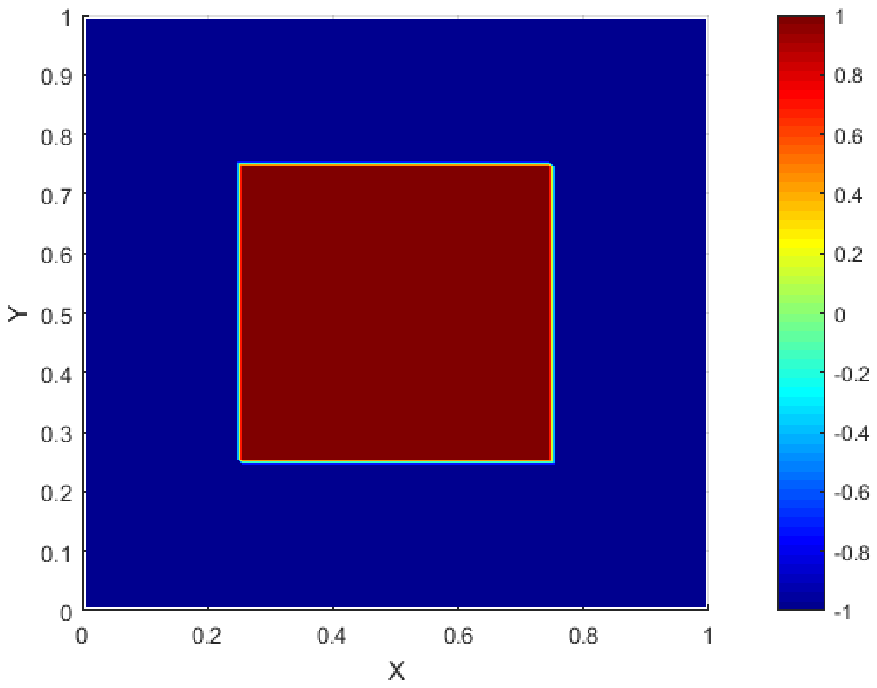}
\includegraphics[scale=0.25]{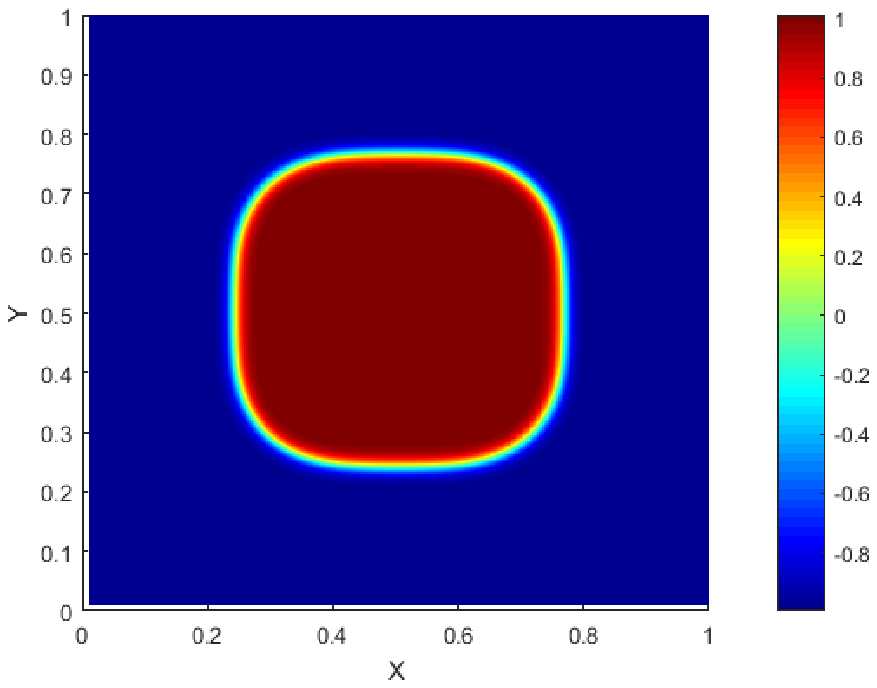}
\includegraphics[scale=0.25]{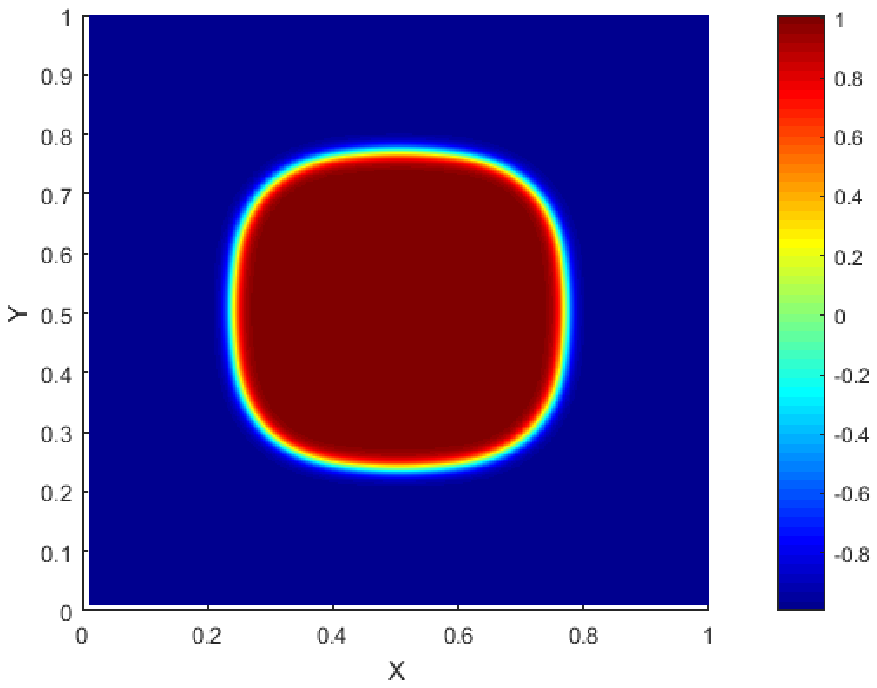}
\includegraphics[scale=0.25]{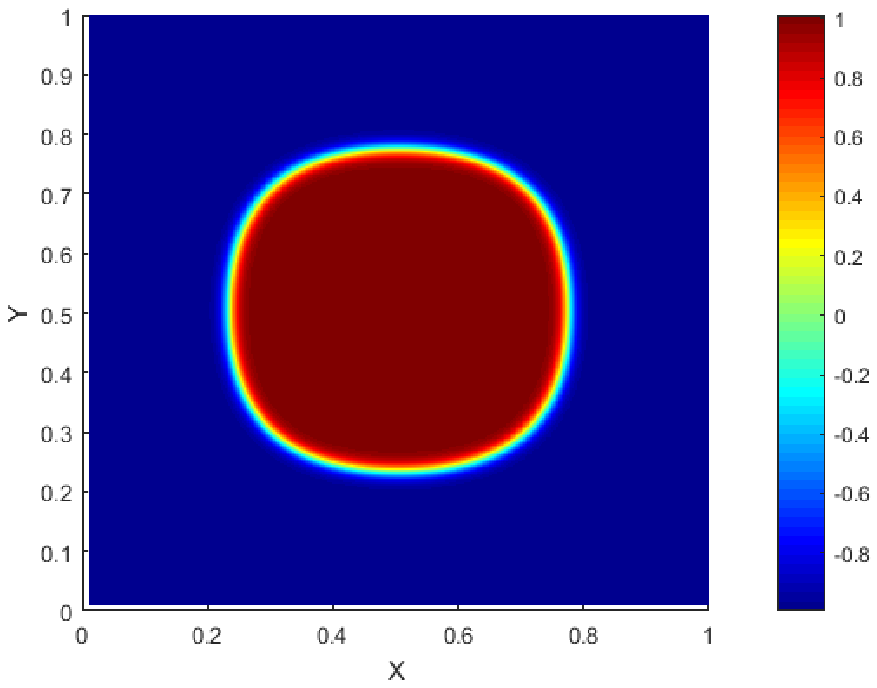}
\includegraphics[scale=0.25]{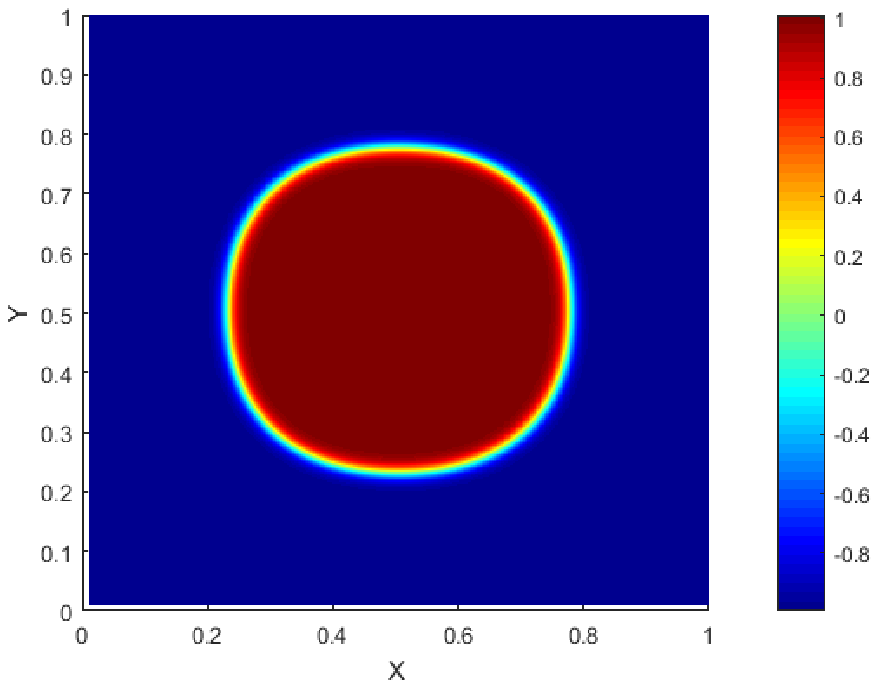}
\caption{Snapshots of the phase function in example 2 at $t=0,5,6,8,10,$ respectively.} \label{fig_ square shape}
\end{figure}

\subsection{Buoyancy-driven flow}
In this example 2, as the test of buoyancy-driven flow, we consider the case of a single bubble rising in a rectangular box. Similar to \cite{collins2013efficient}, we modify the Navier-Stokes equation (\ref{e_modelC}) as follows:
\begin{equation}\label{e_numerical_2}
\aligned
 \frac{\partial \textbf{u}}{\partial t}+\textbf{u}\cdot \nabla\textbf{u}
     -\nu\Delta\textbf{u}+\nabla p=\mu\nabla\phi+\textbf{b},
\endaligned
\end{equation}
where $\textbf{b}$ is a buoyancy term that depends on the mass density $\rho$. We assume that the mass density depends on $\phi$, and the following Boussinesq type approximation is applied:
\begin{equation}\label{e_numerical_3}
\aligned
\textbf{b}=(0,-b(\phi))^{t},\ b(\phi)=\chi(\phi-\phi_0),
\endaligned
\end{equation}
where $\phi_0$ is a constant (usually the average value of $\phi$), and $\chi$ is a constant. In this example, the numerical and physical parameters are as follows:
\begin{flalign*}
\renewcommand{\arraystretch}{1.5}
  \left\{
   \begin{array}{l}
  \hat{h}=1/100,\  \Delta t=5E-4,\ M=0.01,\\
  \epsilon=0.01,\ \nu=1,\ \lambda=0.001,\\
  \phi_0=-0.05,\ \chi=40.
\end{array}\right.
\end{flalign*}
The initial condition for the phase function is choose to be a circular bubble that centered at $(\frac{1}{2},\frac{1}{4})$, and the initial data for the velocity is taken as $\textbf{u}^0=0$. Snapshots of the phase evolution at time $t=0.5,1,4,4.1,4.2,5$ respectively are presented in Fig. \ref{fig1_example5}. It starts as a circular bubble near the bottom of the domain. The density of the bubble is lighter than the density of the surrounding fluid. As expected, the bubble rises, reaching an elliptical shape, and then deforms as it approaches the upper boundary.


\begin{figure}[!htp]
\centering
\includegraphics[scale=0.43]{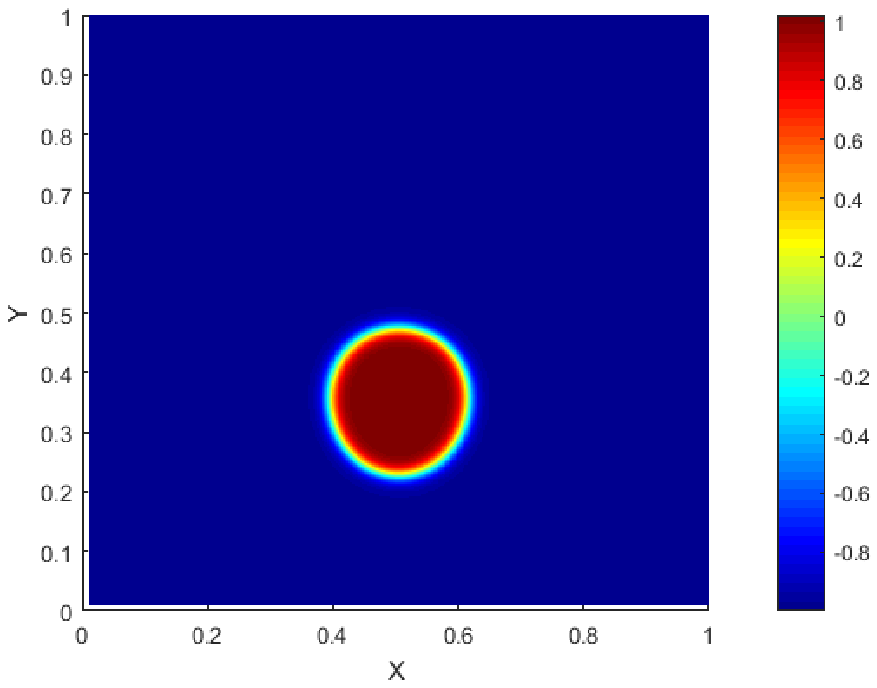}
\includegraphics[scale=0.43]{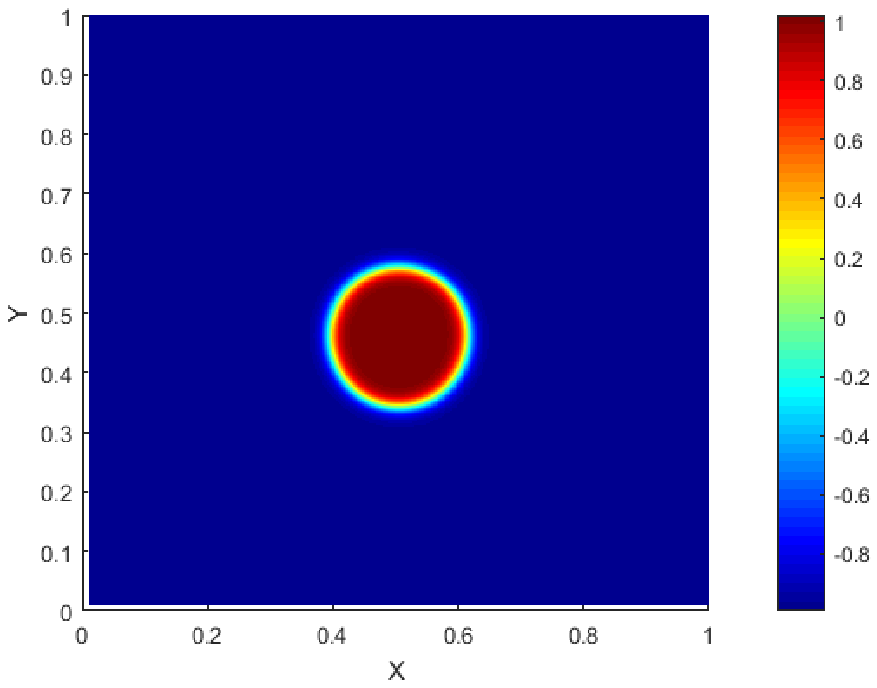}
\includegraphics[scale=0.43]{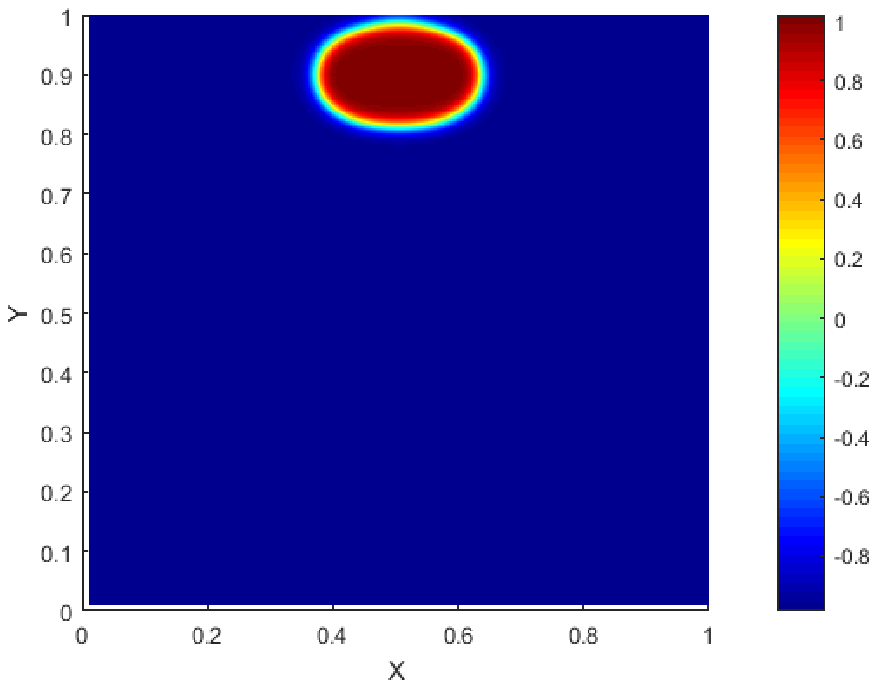}
\includegraphics[scale=0.43]{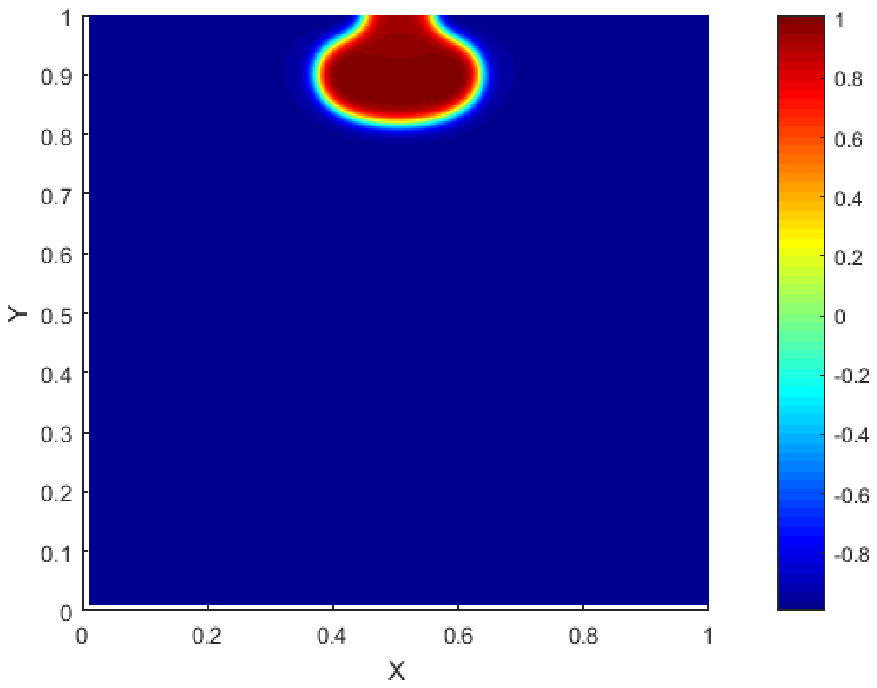}
\includegraphics[scale=0.43]{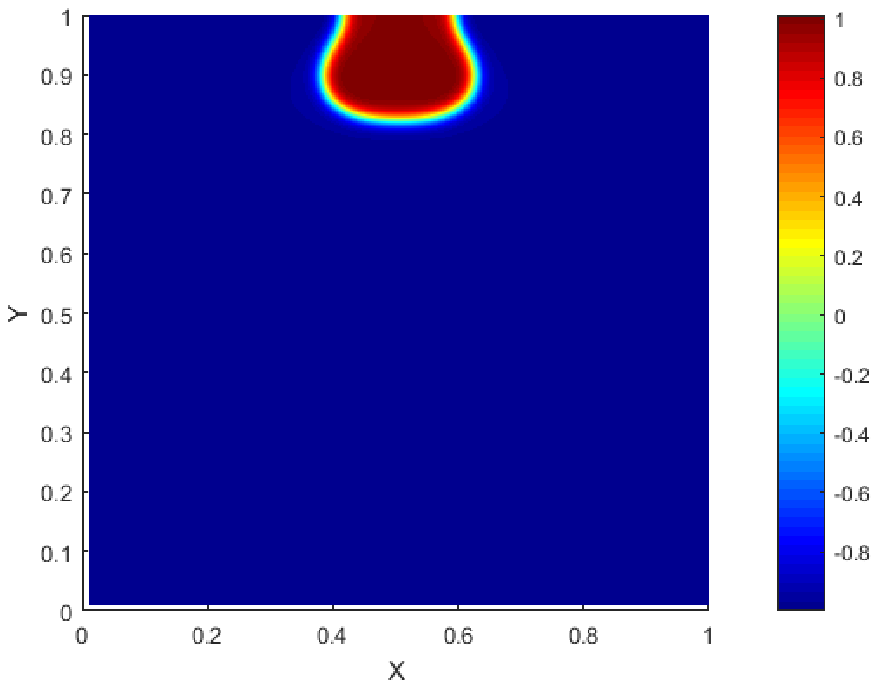}
\includegraphics[scale=0.43]{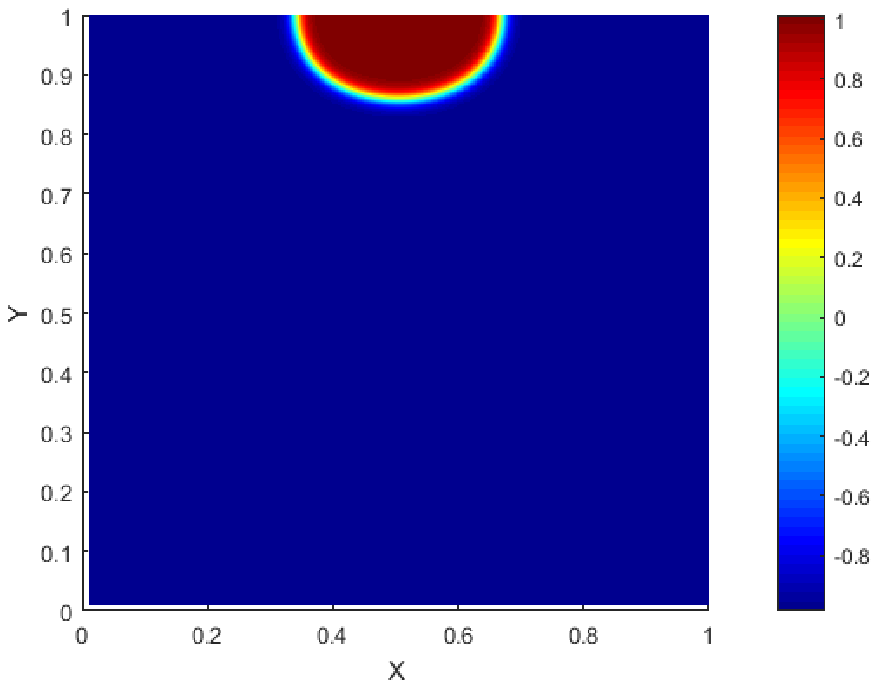}
\caption{Snapshots of the phase function in example 2 at $t=0.5,1,4,4.1,4.2,5$ respectively.} \label{fig1_example5}
\end{figure}

\section{Conclusion}
We developed a second-order fully discrete SAV-MAC scheme for the Cahn-Hilliard-Navier-Stokes phase field model, and proved that it is unconditionally energy stable. We also carried out a rigorous error analysis for the Cahn-Hilliard-Stokes system and derived   second-order error estimates both in time and space for phase field variable, chemical potential, velocity and pressure in different discrete norms. 


 The SAV-MAC scheme, with an explicit treatment of the convective term in the phase equation,  is  extremely efficient as it leads to, at each time step, a sequence of Poisson type equations that can be solved by using fast Fourier transforms. We provided several numerical results to 
 demonstrate the robustness and accuracy of the SAV-MAC scheme for the Cahn-Hilliard-Navier-Stokes phase field model.

 We only carried out an error analysis for the Cahn-Hilliard-Stokes system. To derive corresponding error estimates  for the Cahn-Hilliard-Navier-Stokes system, one needs to use new discretizing techniques such as a high order upwind method to deal with the nonlinear term. This will be a subject of future research.

\section*{Appendix\ A\ \ Finite difference discretization on the staggered grids}
\renewcommand\thesection{A}
To fix the idea, we consider $\Omega=(L_{lx},L_{rx})\times (L_{ly},L_{ry})$. Three dimensional rectangular domains can be dealt with similarly.

The two dimensional domain $\Omega$ is partitioned by  $\Omega_x\times \Omega_y$,    where
  \begin{eqnarray}
    & & \Omega_x: L_{lx}=x_{0}<x_{1}<\cdots<x_{N_x-1}<x_{N_x}=L_{rx},\nonumber\\
    & & \Omega_y: L_{ly}=y_{0}<y_{1}<\cdots<y_{N_y-1}<y_{N_y}=L_{ry}.\nonumber
   \end{eqnarray}
For simplicity we also use the following notations:
\begin{equation}    \label{eq:grid-on-bound}
\left\{
\begin{array}{ll}
x_{-1/2}=x_{0}=L_{lx},& x_{N_x+1/2}=x_{N_x}=L_{rx},\\
y_{-1/2}=y_{0}=L_{ly},& y_{N_y+1/2}=y_{N_y}=L_{ry}.
\end{array}
\right.
\end{equation}
For possible integers $i,j$, $0\leq i\leq N_x,\ 0\leq j\leq N_y$, define
    \begin{eqnarray}
& & x_{i+1/2}=\frac{x_{i}+x_{i+1}}{2},\quad  h_{i+1/2}=x_{i+1}-x_{i},\quad   h=\max\limits_{i}h_{i+1/2},\nonumber\\
& & h_{i}=x_{i+1/2}-x_{i-1/2}=\frac{h_{i+1/2}+h_{i-1/2}}{2},\nonumber\\
& & y_{j+1/2}=\frac{y_{j}+y_{j+1}}{2},\quad  k_{j+1/2}=y_{j+1}-y_{j}, \quad   k=\max\limits_{j}k_{j+1/2},\nonumber\\
& & k_{j}=y_{j+1/2}-y_{j-1/2}=\frac{k_{j+1/2}+k_{j-1/2}}{2},\nonumber\\
& & \Omega_{i+1/2,j+1/2}=(x_{i},x_{i+1})\times (y_{j},y_{j+1}).\nonumber
    \end{eqnarray}
It is clear that
$$ \displaystyle
h_{0}=\frac{h_{1/2}}{2},\ h_{N_x}=\frac{h_{N_x-1/2}}{2} ,\ \ k_{0}=\frac{k_{1/2}}{2},\ k_{N_y}=\frac{k_{N_y-1/2}}{2}.$$
For a function $f(x,y)$, let $f_{l,m}$ denote $f(x_l,y_m)$ where $l$ may take values $i,\ i+1/2$ for  integer $i$, and $m$ may take values $j,\ j+1/2$ for  integer $j$. For discrete functions with values at proper nodal-points, define
\begin{equation}
\left\{
\begin{array}{lll}
  \displaystyle [d_{x}f]_{i+1/2,m}=\frac{f_{i+1,m}-f_{i,m}}{h_{i+1/2}},\qquad & \displaystyle  [D_{y}f]_{l,j+1}=\frac{f_{l,j+3/2}-f_{l,j+1/2}}{k_{j+1}}, \\
 \displaystyle [D_{x}f]_{i+1,m}=\frac{f_{i+3/2,m}-f_{i+1/2,m}}{h_{i+1}}, &  \displaystyle  [d_{y}f]_{l,j+1/2}=\frac{f_{l,j+1}-f_{l,j}}{k_{j+1/2}}.
\end{array}
\right. \label{def:difference}
\end{equation}
For functions $f$ and $g$, define some discrete $l^2$ inner products and norms as follows.
\begin{eqnarray}
 (f,g)_{l^2,M} &\equiv &  \sum\limits_{i=0}^{N_x-1}\sum\limits_{j=0}^{N_y-1} h_{i+1/2}k_{j+1/2} f_{i+1/2,j+1/2} g_{i+1/2,j+1/2}, \label{inner:l2-M}\\
      (f,g)_{l^2,T_x} &\equiv &  \sum\limits_{i=0}^{N_x}\sum\limits_{j=1}^{N_y-1} h_{i}k_{j} f_{i,j} g_{i,j}, \label{inner:l2_x}\\
  (f,g)_{l^2,T_y} &\equiv &  \sum\limits_{i=1}^{N_x-1}\sum\limits_{j=0}^{N_y} h_{i}k_{j} f_{i,j} g_{i,j}, \label{inner:l2_y}\\
   \|f\|_{l^2,\xi}^2 &\equiv &  (f,f)_{l^2,\xi},\qquad \xi=M,\ T_x,\ T_y   . \label{norm:l2}
 \end{eqnarray}
Further define discrete $l^2$ inner products and norms as follows.
\begin{eqnarray}
& &(f,g)_{l^2,T,M} \equiv \sum\limits_{i=1}^{N_x-1}\sum\limits_{j=0}^{N_y-1} h_{i}k_{j+1/2} f_{i,j+1/2} g_{i,j+1/2}, \label{inner:l2-T-M}\\
& & (f,g)_{l^2,M,T} \equiv \sum\limits_{i=0}^{N_x-1}\sum\limits_{j=1}^{N_y-1} h_{i+1/2}k_{j} f_{i+1/2,j}g_{i+1/2,j}, \label{inner:l2-M-T}\\
& &  \|f\|_{l^2,T,M}^2 \equiv   (f,f)_{l^2,T,M}, \quad  \|f\|_{l^2,M,T}^2 \equiv  (f,f)_{l^2,M,T}. \label{norm:l2-T-M}
 \end{eqnarray}
For vector-valued functions $\textbf{u}=(u_1,u_2)$, it is clear that
 \begin{eqnarray}
 \|d_x u_{1}\|_{l^2,M}^2 &\equiv &  \sum\limits_{i=0}^{N_x-1}\sum\limits_{j=0}^{N_y-1}h_{i+1/2}k_{j+1/2} |d_x u_{1,i+1/2,j+1/2}|^2, \label{norm:d-x-u-x}\\
 \|D_y u_{1}\|_{l^2,T_y}^2 &\equiv &     \sum\limits_{i=1}^{N_x-1}\sum\limits_{j=0}^{N_y}h_{i}k_{j} |D_y u_{1,i,j}|^2,   \label{norm:d-y-u-x}
 \end{eqnarray}
and $\|d_y u_{2}\|_{l^2,M},\ \|D_x u_{2}\|_{l^2,T_x}$ can be represented similarly.
Finally define the discrete $H^1$-norm and discrete $l^2$-norm of a vectored-valued function $\textbf{u}$,
 \begin{eqnarray}
  \|D \textbf{u}\|^2  & \equiv & \|d_x u_{1}\|_{l^2,M}^2+ \|D_y u_{1}\|_{l^2,T_y}^2+  \|D_x u_{2}\|_{l^2,T_x}^2+\|d_y u_{2}\|_{l^2,M}^2. \label{norm:d-u}\\
  \|\textbf{u}\|_{l^2}^2 &  \equiv & \| u_{1}\|_{l^2,T,M}^2+ \|  u_{2}\|_{l^2,M,T}^2. \label{norm:l2-u}
 \end{eqnarray}
 For simplicity we only consider the case that for all $h_{i+1/2}=h,\ k_{j+1/2}=k$, i.e. uniform meshes are used both in $x$ and $y$-directions. 

\bibliographystyle{siamplain}
\bibliography{SAV_Navier_Stokes_CH}

\begin{thebibliography}{10}

\bibitem{cahn1958free}
{\sc J.~W. Cahn and J.~E. Hilliard}, {\em Free energy of a nonuniform system.
  $\uppercase\expandafter{\romannumeral1}$. {I}nterfacial free energy}, The
  Journal of chemical physics, 28 (1958), pp.~258--267.

\bibitem{cahn1959free}
{\sc J.~W. Cahn and J.~E. Hilliard}, {\em Free energy of a nonuniform system.
  $\uppercase\expandafter{\romannumeral3}$. nucleation in a two-component
  incompressible fluid}, The Journal of chemical physics, 31 (1959),
  pp.~688--699.

\bibitem{chen2016efficient}
{\sc Y.~Chen and J.~Shen}, {\em Efficient, adaptive energy stable schemes for
  the incompressible {C}ahn-{H}illiard {N}avier-{S}tokes phase-field models},
  Journal of Computational Physics, 308 (2016), pp.~40--56.

\bibitem{cheng2017efficient}
{\sc Q.~Cheng, X.~Yang, and J.~Shen}, {\em Efficient and accurate numerical
  schemes for a hydro-dynamically coupled phase field diblock copolymer model},
  Journal of Computational Physics, 341 (2017), pp.~44--60.

\bibitem{collins2013efficient}
{\sc C.~Collins, J.~Shen, and S.~M. Wise}, {\em An efficient, energy stable
  scheme for the {C}ahn-{H}illiard-{B}rinkman system}, Communications in
  Computational Physics, 13 (2013), pp.~929--957.

\bibitem{dawson1998two}
{\sc C.~N. Dawson, M.~F. Wheeler, and C.~S. Woodward}, {\em A two-grid finite
  difference scheme for nonlinear parabolic equations}, SIAM Journal on
  Numerical Analysis, 35 (1998), pp.~435--452.

\bibitem{diegel2015analysis}
{\sc A.~E. Diegel, X.~H. Feng, and S.~M. Wise}, {\em Analysis of a mixed finite
  element method for a {C}ahn-{H}illiard-{D}arcy-{S}tokes system}, SIAM Journal
  on Numerical Analysis, 53 (2015), pp.~127--152.

\bibitem{duran1990superconvergence}
{\sc R.~Dur{\'a}n}, {\em Superconvergence for rectangular mixed finite
  elements}, Numerische Mathematik, 58 (1990), pp.~287--298.

\bibitem{feng2007analysis}
{\sc X.~Feng, Y.~He, and C.~Liu}, {\em Analysis of finite element
  approximations of a phase field model for two-phase fluids}, Mathematics of
  Computation, 76 (2007), pp.~539--571.

\bibitem{grun2013convergent}
{\sc G.~Gr\"un}, {\em On convergent schemes for diffuse interface models for
  two-phase flow of incompressible fluids with general mass densities}, SIAM
  Journal on Numerical Analysis, 51 (2013), pp.~3036--3061.

\bibitem{han2015second}
{\sc D.~Han and X.~Wang}, {\em A second order in time, uniquely solvable,
  unconditionally stable numerical scheme for
  {C}ahn-{H}illiard-{N}avier-{S}tokes equation}, Journal of Computational
  Physics, 290 (2015), pp.~139--156.

\bibitem{hu2009stable}
{\sc Z.~Hu, S.~M. Wise, C.~Wang, and J.~S. Lowengrub}, {\em Stable and
  efficient finite-difference nonlinear-multigrid schemes for the phase field
  crystal equation}, Journal of Computational Physics, 228 (2009),
  pp.~5323--5339.

\bibitem{li2018energy}
{\sc X.~Li, J.~Shen, and H.~Rui}, {\em Energy stability and convergence of
  {SAV} block-centered finite difference method for gradient flows},
  Mathematics of Computation,  (2019).

\bibitem{liu2007dynamics}
{\sc C.~Liu, J.~Shen, and X.~Yang}, {\em Dynamics of defect motion in nematic
  liquid crystal flow: modeling and numerical simulation}, Commun. Comput.
  Phys, 2 (2007), pp.~1184--1198.

\bibitem{rui2017stability}
{\sc H.~Rui and X.~Li}, {\em Stability and superconvergence of {MAC} scheme for
  stokes equations on nonuniform grids}, SIAM Journal on Numerical Analysis, 55
  (2017), pp.~1135--1158.

\bibitem{shen2012second}
{\sc J.~Shen, C.~Wang, X.~Wang, and S.~M. Wise}, {\em Second-order convex
  splitting schemes for gradient flows with {E}hrlich-{S}chwoebel type energy:
  application to thin film epitaxy}, SIAM Journal on Numerical Analysis, 50
  (2012), pp.~105--125.

\bibitem{shen2017new}
{\sc J.~Shen, J.~Xu, and J.~Yang}, {\em A new class of efficient and robust
  energy stable schemes for gradient flows}, arXiv preprint arXiv:1710.01331,
  (2017).

\bibitem{shen2018scalar}
{\sc J.~Shen, J.~Xu, and J.~Yang}, {\em The scalar auxiliary variable ({SAV})
  approach for gradient flows}, Journal of Computational Physics, 353 (2018),
  pp.~407--416.

\bibitem{shen2010numerical}
{\sc J.~Shen and X.~Yang}, {\em Numerical approximations of {A}llen-{C}ahn and
  {C}ahn-{H}illiard equations}, Discrete Contin. Dyn. Syst, 28 (2010),
  pp.~1669--1691.

\bibitem{shen2010phase}
{\sc J.~Shen and X.~Yang}, {\em A phase-field model and its numerical
  approximation for two-phase incompressible flows with different densities and
  viscosities}, SIAM Journal on Scientific Computing, 32 (2010),
  pp.~1159--1179.

\bibitem{shen2015decoupled}
{\sc J.~Shen and X.~Yang}, {\em Decoupled, energy stable schemes for
  phase-field models of two-phase incompressible flows}, SIAM Journal on
  Numerical Analysis, 53 (2015), pp.~279--296.

\bibitem{van1979thermodynamic}
{\sc J.~D. van~der Waals}, {\em The thermodynamic theory of capillarity under
  the hypothesis of a continuous variation of density}, Journal of Statistical
  Physics, 20 (1979), pp.~200--244.

\bibitem{wang2011energy}
{\sc C.~Wang and S.~M. Wise}, {\em An energy stable and convergent
  finite-difference scheme for the modified phase field crystal equation}, SIAM
  Journal on Numerical Analysis, 49 (2011), pp.~945--969.

\bibitem{weiser1988convergence}
{\sc A.~Weiser and M.~F. Wheeler}, {\em On convergence of block-centered finite
  differences for elliptic problems}, SIAM Journal on Numerical Analysis, 25
  (1988), pp.~351--375.

\bibitem{xu2006stability}
{\sc C.~Xu and T.~Tang}, {\em Stability analysis of large time-stepping methods
  for epitaxial growth models}, SIAM Journal on Numerical Analysis, 44 (2006),
  pp.~1759--1779.

\bibitem{yang2017numerical}
{\sc X.~Yang and G.~Zhang}, {\em Numerical approximations of the
  {C}ahn-{H}illiard and {A}llen-{C}ahn equations with general nonlinear
  potential using the {I}nvariant {E}nergy {Q}uadratization approach}, arXiv
  preprint arXiv:1712.02760,  (2017).

\bibitem{yue2004diffuse}
{\sc P.~Yue, J.~J. Feng, C.~Liu, and J.~Shen}, {\em A diffuse-interface method
  for simulating two-phase flows of complex fluids}, Journal of Fluid
  Mechanics, 515 (2004), pp.~293--317.

\bibitem{zhao2017novel}
{\sc J.~Zhao, X.~Yang, Y.~Gong, and Q.~Wang}, {\em A novel linear second order
  unconditionally energy stable scheme for a hydrodynamic-tensor model of
  liquid crystals}, Computer Methods in Applied Mechanics and Engineering, 318
  (2017), pp.~803--825.

\bibitem{zhao2016energy}
{\sc J.~Zhao, X.~Yang, J.~Li, and Q.~Wang}, {\em Energy stable numerical
  schemes for a hydrodynamic model of nematic liquid crystals}, SIAM Journal on
  Scientific Computing, 38 (2016), pp.~A3264--A3290.

\end{thebibliography}

\end{document}